\def\sideremark#1{\ifvmode\leavevmode\fi\vadjust{\vbox to0pt{\vss 
      \hbox to 0pt{\hskip\hsize\hskip1em           
 \vbox{\hsize2cm\tiny\raggedright\pretolerance10000
 \noindent #1\hfill}\hss}\vbox to8pt{\vfil}\vss}}} %
\pgfplotsset{compat=1.12}
\newtheorem{introtheorem}{Theorem}
\newtheorem{introconjecture}[introtheorem]{Conjecture}
\newtheorem{theorem}{Theorem}[section]
\newtheorem{lemma}[theorem]{Lemma}
\newtheorem{proposition}[theorem]{Proposition}
\newtheorem{corollary}[theorem]{Corollary}
\theoremstyle{definition}
\newtheorem{definition}[theorem]{Definition}
\theoremstyle{remark}
\newtheorem{remark}[theorem]{Remark}
\newcommand{\cc}{\ensuremath{C^*}}
\newcommand{\dd}{\ensuremath{D^*}}
\newcommand{\muu}{\ensuremath{\mu (D^*, E_r)}}
\newcommand{\ord}{\ensuremath{\operatorname {ord}}}
\numberwithin{figure}{section}
\begin{document}

\title[On the valuative Nagata conjecture]{On the valuative Nagata conjecture}

\author[Galindo]{Carlos Galindo}

\address{Universitat Jaume I, Campus de Riu Sec, Departamento de Matem\'aticas \& Institut Universitari de Matem\`atiques i Aplicacions de Castell\'o, 12071
Caste\-ll\'on de la Plana, Spain} \email{galindo@uji.es}

\author[Monserrat]{Francisco Monserrat}

\address{Universitat Polit\`ecnica de Val\`encia, Departamento de Matem\'atica Aplicada \&  Instituto Universitario de Matem\'atica Pura y Aplicada, 46022
Valencia, Spain} \email{framonde@mat.upv.es}

\author[Moreno-\'Avila]{Carlos-Jes\'us Moreno-\'Avila}

\address{Universitat Jaume I, Campus de Riu Sec, Departamento de Matem\'aticas \& Institut Universitari de Matem\`atiques i Aplicacions de Castell\'o, 12071
Caste\-ll\'on de la Plana, Spain} \email{cavila@uji.es}

\author[Moyano-Fern\'andez]{Julio-José Moyano-Fernández}

\address{Universitat Jaume I, Campus de Riu Sec, Departamento de Matem\'aticas \& Institut Universitari de Matem\`atiques i Aplicacions de Castell\'o, 12071
Caste\-ll\'on de la Plana, Spain} \email{moyano@uji.es}

\subjclass[2010]{Primary: 14C20; Secondary: 14E15, 14C22, 13A18}
\keywords{Seshadri constants; valuation; blow-up; valuative Nagata conjecture; Newton-Okounkov body.}
\thanks{The authors were partially funded by MCIN/AEI/10.13039/501100011033 and by ``ERDF A way of making Europe", grants PGC2018-096446-B-C21 and PGC2018-096446-B-C22, as well as by Universitat Jaume I, grant UJI-B2021-02. The third author was also supported by the Margarita Salas postdoctoral contract MGS/2021/14(UP2021-021) financed by the European Union-NextGenerationEU}

\begin{abstract}
We provide several equivalent conditions for a plane divisorial valuation of a smooth projective surface to be minimal with respect to an ample divisor. These conditions involve a valuative Seshadri constant and other global tools of the surface defined by the divisorial valuation. As a consequence, we derive several equivalent statements for the valuative Nagata conjecture and some related results.
\end{abstract}

\maketitle

\section{Introduction}\label{sec:intro}
Nagata in \cite{Nag} gave a negative answer to the 14th problem of Hilbert and, as a consequence, emerged the Nagata conjecture.  Despite many efforts, this conjecture remains open after more than 60 years.  The conjecture is related to several important tools recently used in global geometry of surfaces as Seshadri and Waldschmidt constants or the Mori cone. In fact, there are equivalent statements to the Nagata conjecture involving all those tools.

A valuative Nagata conjecture has been recently stated \cite{DumHarKurRoeSze,GalMonMoy}. This conjecture involves sequences of blowups at infinitely near points on the projective plane defining a (real) plane valuation $\nu$ and it implies the Nagata conjecture. Moreover, the valuative Nagata conjecture  is implied by the Greuel-Lossen-Shustin conjecture \cite[Conjecture 4.7.11]{GreuelBook} and there are asymptotic evidences of its trueness in some particular cases \cite{GalMonMoy}. The main tool used for stating the valuative conjecture is an analogue of the Seshadri constant named Seshadri-type constant, denoted by $\hat{\mu}(\nu)$ and introduced in \cite{BouKurMacSze}. No tool mentioned in the first paragraph has been considered in this context.

The concept of minimal valuation (of the projective plane) is essential  when stating the valuative Nagata conjecture. In this paper we extend this concept to divisorial valuations $\nu$ of smooth projective surfaces $S$ and big divisors $D$ on $S$ (see Section \ref{subsec:mupico}). We provide several statements equivalent to the concept of minimality with respect to ample divisors $D$, and deduce consequences of non-minimality. Our results involve surfaces defined by blowing up a finite chain of infinitely near points on $S$ and, in this setting, a natural valuative Seshadri constant $\epsilon (D, \nu)$ is introduced and related to the value $\hat{\mu}_D(\nu)$ used to define minimality. This allows us to reproduce for these less known surfaces many results one would expect from a valuative Nagata-type conjecture on them. In particular, and when $S$ is the projective plane and $D$ a general line $L$, as our main result, we prove several equivalent statements to the valuative Nagata conjecture which go in parallel to those known in the classical case. It brings to the question of what conditions should we impose on $\nu$ and $D$ so that one gets minimality and their equivalent properties.

The (classical) Nagata conjecture is one of the most stimulating problems for linear systems on the complex projective plane $\mathbb{P}^2$. It predicts the inequality $d > \frac{1}{\sqrt{r}} \sum_{i=1}^r m_i$, where $d$ is the degree of any curve $C$ on $\mathbb{P}^2$ such that ${\rm mult}_{x_i} (C) \geq m_i$, $m_i$ being non-negative integers and $\{x_i\}_{i=1}^r$, $r \geq 10$, very general points in $\mathbb{P}^2$. Nagata proved this result when $r$ is a square and it is an open problem in the remaining cases. The Nagata conjecture has equivalent statements involving several interesting objects used in algebraic geometry \cite{CilHarMirRoe,RoeSup} and it has also extended versions to smooth projective surfaces and nef divisors \cite{StrSze,DeFer1,SyzSze,CilHarMirRoe,CilMirRoe}.

One of these objects are {\it Seshadri (multipoint) constants} on surfaces $S$ with respect to nef divisors $D$, usually denoted $\epsilon(S,D,x_1,x_2 \ldots, x_r)$. Motivated by the Fujita conjecture, Demailly introduced these constants (for a variety and a point) in \cite[Section 6]{Dem} and, although they are not useful in that direction, are an important tool. The literature contains many references to these constants on smooth projective surfaces \cite{EinLaz,Bau,Harb1,Gar,HarbRoe,DumKurMacSze1,FarSzeSzpTut,Han,HanMuk,HanPrav,HanHar,Pok,BauGrimSch,MarPok}. It holds that $\epsilon(S,D,x_1,x_2 \ldots, x_r) \leq \sqrt{D^2/r}$ and, when the bound is attained, one says that $\epsilon(S,D,x_1,x_2 \ldots, x_r)$ is maximal. Otherwise, there exists a submaximal curve, that is a curve $C$ on $S$ going through at least a point $x_i$ such that  $\epsilon(S,D,x_1,x_2 \ldots, x_r)= \frac{D\cdot C}{\sum_{i=1}^{r} \text{mult}_{x_i} C}$ \cite[Proposition 1.1]{BauSze}. Setting $S= \mathbb{P}^2$ and $D=L$, the Nagata conjecture is equivalent to the non-existence of submaximal curves (for very general points $\{x_i\}_{i=1}^r$, $r\geq 10$) and it can be generalized to the Nagata-Biran-Szemberg conjecture, which can be stated as follows: $\epsilon(S,D,x_1,x_2 \ldots, x_r)$ is maximal for $D$ ample, $r$ large enough and $\{x_i\}_{i=1}^r$ very general points in an arbitrary smooth projective surface $S$ \cite[Section 2]{StrSze} and \cite[Section 5.1]{Laz1}.

The Nagata conjecture is also related to the {\it Mori cone} of the surface $\widetilde{\mathbb{P}^2}$ defined by the blowup $\pi: \widetilde{\mathbb{P}^2} \rightarrow \mathbb{P}^2$ of $\mathbb{P}^2$ at $r$ points $x_i$ in $\mathbb{P}^2$, $1 \leq i \leq r$. This cone is an essential tool for the minimal model program and it is far from being known for surfaces \cite{DeFer1,CilHarMirRoe,CilMirRoe}. The Nagata conjecture can be equivalently stated by claiming that, when $\{x_i\}_{i=1}^r$ are very general points and $r\geq 10$, the ray generated by the class of the $\mathbb{R}$-divisor $ \sqrt{r} \pi^* L - \sum_{i=1}^r E_i $ is wonderful, where $\pi^*$ means pull-back and the $E_i$'s denote the exceptional divisors created by $\pi$. Recall that a wonderful ray is an irrational nef ray with vanishing self-intersection.

{\it Newton-Okounkov bodies} were introduced by Okounkov \cite{Oko1} and developed by Lazarsfeld and Musta{\c t}{\u a} \cite{LazMus}, and Kaveh and Khovanskii \cite{KavKho}. They give, among other properties, a systematic procedure for constructing toric degenerations of projective varieties. The recent literature has a good number of papers considering these bodies on surfaces for different purposes \cite{CilFarKurLozRoeShr,KurLoz1,RoeSze,GalMonMor2,GalMonMoyNic2,MoyFerNicRoe} and, as we will see, they play a role in this paper.

As we said previously, the Nagata conjecture has a valuative version, where one considers a very general (real) plane valuation $\nu$ of $\mathbb{P}^2$ (see \cite[Definition 4.2]{GalMonMoy}) and uses the inverse of its normalized volume $[\text{vol}^N(\nu)]^{-1}$ instead of the cardinality of a set of very general points of $\mathbb{P}^2$. The conjecture involves a normalized Seshadri-type constant $\hat{\mu}^N(\nu)$ that satisfies $\hat{\mu}^N(\nu) \geq \sqrt{\frac{1}{[\text{vol}^N(\nu)]^{-1}}}$. When the above bound is attained one says that $\nu$ is minimal, and the {\it valuative Nagata conjecture} states that $\nu$ is minimal when it is a very general valuation  and $[\text{vol}^N(\nu)]^{-1} \geq 9$ (see \cite{GalMonMoy}).

This valuative conjecture was only stated in the above terms and, as far as we know, no extension to other surfaces has been studied. In light of the multiple and interesting statements of the  Nagata conjecture we give, in the forthcoming Conjecture \ref{ConjectureP2}, several parallel statements for the valuative Nagata conjecture. In order to increase the generality of our results, this paper considers a smooth (complex) projective surface $S$, a divisorial valuation $\nu_r$ of $S$ (where $r$ is the number of infinitely near points of the configuration of centers of $\nu_r$)  and an ample divisor $D$ on $S$. In this setting we consider the Seshadri-type constant $\hat{\mu}_D (\nu_r)$ (see \eqref{eq:defmupico}) and, in Definition \ref{def:sdnu},  we introduce  a natural valuative Seshadri constant $\varepsilon(D,\nu_r)$. Lemma \ref{lemm:infimo_seshadri} proves that our definition extends that of one point Seshadri constant and gives rise to an ampleness criterion for divisors on smooth projective surfaces involving divisorial valuations of those surfaces (Theorem \ref{thm_criterion_amplitude}).

The first main result in this paper is the following one (Theorem \ref{thm:triangulo} of the paper), which gives several characterizations of  minimal divisorial plane valuations of smooth projective surfaces with respect to ample divisors (Definition \ref{def:minimalvaluation}).

\begin{introtheorem}\label{Intro_thm:triangulo}
Let $D$ be an ample divisor on a smooth projective surface $S$ and $\nu_r$ a divisorial valuation of $S$. Set $\mathcal{C}_{\nu_r}=\{p_i\}_{i=1}^r$ the configuration of centers of $\nu_r,$ $\tilde{S}$ the surface defined by the divisorial valuation $\nu_r$ and $\nu$ the valuation defined by the flag $\{\tilde{S}\supset E_r\supset\{q\}\},$ where $E_r$ is the exceptional divisor which defines $\nu_r$ and $q$ is a closed point of $E_r$. Denote by $\mathfrak{T}_D(\nu)$ the triangle $\mathfrak{C}_{\nu}\cap \mathfrak{h}_D(\nu_r),$ $\mathfrak{C}_\nu$ being the cone generated by the value semigroup of $\nu$ and $\mathfrak{h}_D(\nu_r)$ the half-plane $\mathfrak{h}_D(\nu_r):=\{(x,y) \ | \ 0\leq x\leq\hat{\mu}_D(\nu_r)\}$, see Proposition \ref{prop:335}. Then, the following statements are equivalent:
\begin{enumerate}[(a)]
\item The divisorial valuation $\nu_r$ is minimal with respect to $D$.

\item The Newton-Okounkov body of $D$ with respect to $\nu$ coincides with the triangle $\mathfrak{T}_D(\nu)$.

\item The $\mathbb{R}$-divisor
$$
P_\mu:=D^* - \dfrac{\hat{\mu}_D(\nu_r)}{[\text{\emph{vol}}(\nu_r)]^{-1}} \sum_{i=1}^r \nu_r (\mathfrak{m}_i) \cdot E_i^*
$$
is nef and satisfies that $P_\mu^2=0$, where $\mathfrak{m}_i$ is the maximal ideal corresponding to the closed point $p_i$, $\nu_r(\mathfrak{m}_i)=\min\{\nu_r(f)\ | \ f\in\mathfrak{m}_i\setminus\{0\}\}$ and $E_i^*$ denotes the pull-back on $\tilde{S}$ of the exceptional divisor $E_i$ created by blowing up at $p_i$.

\item It holds that $\hat{\mu}_D(\nu_r)=\epsilon(D,\nu_r)[\text{\emph{vol}}(\nu_r)]^{-1}$.

\item The segment $\{[D^*-tE_r]\mid 0\leq t\leq \hat{\mu}_D(\nu_r)\}$ crosses only one Zariski chamber of the big cone of $\tilde{S}$.

\item It holds that $$\epsilon(D,\nu_r)=\sqrt{\frac{D^2}{[\text{\emph{vol}}(\nu_r)]^{-1}}}.$$

\end{enumerate}
\end{introtheorem}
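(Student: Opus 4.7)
The plan is to organize the equivalences around a chain of Seshadri-type inequalities anchored by the baseline bound $\hat{\mu}_D(\nu_r) \geq \sqrt{D^2\cdot[\text{vol}(\nu_r)]^{-1}}$ of Proposition \ref{prop:335} (whose equality case is by definition minimality (a)), together with the Seshadri-type bound $\epsilon(D,\nu_r)\leq \sqrt{D^2/[\text{vol}(\nu_r)]^{-1}}$ of Lemma \ref{lemm:infimo_seshadri} and the intermediate inequality $\hat{\mu}_D(\nu_r) \geq \epsilon(D,\nu_r)\,[\text{vol}(\nu_r)]^{-1}$. These assemble into
\[
\epsilon(D,\nu_r)\,[\text{vol}(\nu_r)]^{-1} \;\leq\; \sqrt{D^2\cdot[\text{vol}(\nu_r)]^{-1}} \;\leq\; \hat{\mu}_D(\nu_r),
\]
so that equality in any of (a), (d), (f) forces equality throughout, and conversely; this yields (a) $\Leftrightarrow$ (d) $\Leftrightarrow$ (f) directly.

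For (a) $\Leftrightarrow$ (c), I would first compute $P_\mu^2$ using the orthogonality relations $E_i^*\cdot E_j^* = -\delta_{ij}$ and the identity $[\text{vol}(\nu_r)]^{-1} = \sum_{i=1}^r \nu_r(\mathfrak{m}_i)^2$: a direct expansion yields
\[
P_\mu^2 \;=\; D^2 - \frac{\hat{\mu}_D(\nu_r)^2}{[\text{vol}(\nu_r)]^{-1}},
\]
which vanishes precisely under (a). The nefness of $P_\mu$ is then the statement that this class, which is the natural candidate for the Zariski positive part of $D^*-\hat{\mu}_D(\nu_r)E_r$, is actually nef: under (a) this follows because minimality prevents any ``submaximal'' curve on $\tilde{S}$ from entering the negative part of the Zariski decomposition along the segment, while conversely nefness combined with $P_\mu^2 = 0$ places $P_\mu$ on the boundary of the big cone of $\tilde{S}$ and recovers minimality.

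The equivalences (b) $\Leftrightarrow$ (e) $\Leftrightarrow$ (c) follow from the description of Newton--Okounkov bodies on surfaces due to Lazarsfeld--Musta\c{t}\u{a} and K\"uronya--Lozovanu combined with the Zariski chamber decomposition of Bauer--K\"uronya--Szemberg. With respect to the flag $\{\tilde{S}\supset E_r\supset \{q\}\}$, the slice of $\Delta_\nu(D^*)$ at height $t$ is an interval of length $P_t\cdot E_r$ translated by $\ord_q(N_t|_{E_r})$, where $D^*-tE_r = P_t + N_t$ is the Zariski decomposition; this profile is piecewise affine linear in $t$, with breakpoints exactly where the segment $\{[D^*-tE_r]\}$ crosses a chamber wall. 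Hence $\Delta_\nu(D^*)$ equals the triangle $\mathfrak{T}_D(\nu)$ iff the segment stays in a single chamber, and in that case the endpoint class $P_{\hat{\mu}_D(\nu_r)}$ coincides with $P_\mu$ and is nef with vanishing self-intersection, closing the ring (c) $\Leftrightarrow$ (e) $\Leftrightarrow$ (b).

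The main obstacle is the careful identification of the class $P_\mu$ with the Zariski positive part of $D^*-\hat{\mu}_D(\nu_r)E_r$ and the derivation of the intermediate inequality $\hat{\mu}_D(\nu_r)\geq \epsilon(D,\nu_r)\,[\text{vol}(\nu_r)]^{-1}$; both rely on excluding submaximal curves on $\tilde{S}$ through the valuative Seshadri constant $\epsilon(D,\nu_r)$, and should follow by invoking the ampleness criterion of Theorem \ref{thm_criterion_amplitude} to control which effective curves can contribute to the negative part along the one-parameter family $\{D^*-tE_r\}$.
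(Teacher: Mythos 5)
There is a genuine gap, and it sits exactly at the hard direction of the theorem. Your chain
\[
\epsilon(D,\nu_r)\,[\mathrm{vol}(\nu_r)]^{-1}\;\le\;\sqrt{D^2\,[\mathrm{vol}(\nu_r)]^{-1}}\;\le\;\hat{\mu}_D(\nu_r)
\]
only yields (d) $\Rightarrow$ (a) and (d) $\Rightarrow$ (f): statement (a) is equality in the \emph{right-hand} inequality and (f) is equality in the \emph{left-hand} one, and equality at one link of a chain does not propagate to the other. (The implication (f) $\Rightarrow$ (a) does follow, but only by additionally invoking $\epsilon(D,\nu_r)\hat{\mu}_D(\nu_r)\le D^2$, i.e.\ Proposition \ref{prop:sDnu_mupico}(b), which you never state.) So your claim that ``equality in any of (a), (d), (f) forces equality throughout'' is false as written, and (a) $\Rightarrow$ (d)/(f) --- that minimality forces the valuative Seshadri constant to be maximal --- is left unproved. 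The same gap reappears in your treatment of (a) $\Rightarrow$ (c): the assertion that ``minimality prevents any submaximal curve from entering the negative part'' is precisely what has to be shown, and the ampleness criterion of Theorem \ref{thm_criterion_amplitude} does not supply it (it only gives positivity of $\epsilon(D,\nu_r)$, not maximality).

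The mechanism the paper uses to close this gap is the one you only gesture at in your third paragraph: from (a) one first gets (b) by comparing areas ($\Delta_\nu(D)\subseteq\mathfrak{T}_D(\nu)$ with $\mathrm{Area}(\Delta_\nu(D))=D^2/2$ and $\mathrm{Area}(\mathfrak{T}_D(\nu))=\hat{\mu}_D(\nu_r)^2/2\overline{\beta}_{g+1}(\nu_r)$); then, if $P_\mu$ failed to be nef, some curve $C$ with $P_\mu\cdot\tilde C<0$ would force $\hat{\mu}_D(\nu_r)>\epsilon(D,\nu_r)\overline{\beta}_{g+1}(\nu_r)$, so by Proposition \ref{prop:decZar} and Corollary \ref{cor:BoundZarCham} the class $D^*-\epsilon(D,\nu_r)\overline{\beta}_{g+1}(\nu_r)E_r$ lies on a Zariski chamber wall strictly inside the segment, and Proposition \ref{pro:KLM} then produces a vertex of $\Delta_\nu(D)$ absent from $\mathfrak{T}_D(\nu)$, contradicting (b). Without this contradiction argument (or an equivalent one), your cycle of implications does not close. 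A smaller point: in your claimed ``$\Delta_\nu(D)=\mathfrak{T}_D(\nu)$ iff the segment stays in one chamber,'' the ``if'' direction needs the extra observation that $P_{\hat{\mu}_D(\nu_r)}^2=\mathrm{vol}(D^*-\hat{\mu}_D(\nu_r)E_r)=0$, since a single-chamber segment a priori only shows $\Delta_\nu(D)$ is \emph{some} triangle contained in $\mathfrak{T}_D(\nu)$.
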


Theorem \ref{Intro_thm:triangulo} is complemented with the following result (Theorem \ref{prop:submaximal_curve}) on submaximal curves in this context (see Definition \ref{def:submaximalcurves}).

\begin{introtheorem}\label{Intro_prop:submaximal_curve}
Let $D$ be an ample divisor on a smooth projective surface $S$ and $\nu_r$ a  divisorial valuation of $S$. If $\nu_r$ is not minimal with respect to $D$, then there exists an integral curve $C$ on $S$ with $\nu_r(\varphi_{C})>0,$ $\varphi_C$ being the germ of $C$ at the first point $p_1$ of the configuration of $\nu_r,$ such that
$$
\dfrac{D\cdot C}{\nu_r(\varphi_{C})}=\varepsilon(D,\nu_r)<\sqrt{\frac{D^2}{[\text{\emph{vol}}(\nu_r)]^{-1}}}.
$$
Moreover, if there exists an integral curve $C$ such that $\nu_r(\varphi_C)>0$ and $$ \dfrac{D\cdot C}{\nu_r(\varphi_C)}<\sqrt{\frac{D^2}{[\text{\emph{vol}}(\nu_r)]^{-1}}},$$ then the valuation $\nu_r$ is not minimal with respect to $D$.
\end{introtheorem}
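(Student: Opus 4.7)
\emph{Plan.} My plan is to reduce Theorem~\ref{Intro_prop:submaximal_curve} to the characterizations of minimality gathered in Theorem~\ref{Intro_thm:triangulo}, using the description of the valuative Seshadri constant as an infimum
$$
\varepsilon(D,\nu_r)=\inf_{C}\frac{D\cdot C}{\nu_r(\varphi_{C})},
$$
where $C$ ranges over integral curves of $S$ with $\nu_r(\varphi_{C})>0$. This description is the natural extension of the one-point Seshadri constant to our setting (Definition~\ref{def:sdnu}) and is precisely what Lemma~\ref{lemm:infimo_seshadri} encodes, together with the universal bound $\varepsilon(D,\nu_r)\leq\sqrt{D^2/[\vol(\nu_r)]^{-1}}$ which also follows from the ampleness criterion in Theorem~\ref{thm_criterion_amplitude}.

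\emph{Easy (``if'') direction.} Suppose that an integral curve $C$ satisfies $\nu_r(\varphi_C)>0$ and $(D\cdot C)/\nu_r(\varphi_C)<\sqrt{D^2/[\vol(\nu_r)]^{-1}}$. Then the infimum definition gives $\varepsilon(D,\nu_r)<\sqrt{D^2/[\vol(\nu_r)]^{-1}}$, so condition~(f) of Theorem~\ref{Intro_thm:triangulo} is violated and the equivalence (a)$\Leftrightarrow$(f) forces $\nu_r$ to be non-minimal with respect to $D$.

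\emph{Main (``only if'') direction.} Assume $\nu_r$ is not minimal with respect to $D$. Theorem~\ref{Intro_thm:triangulo}(f), combined with the universal upper bound on $\varepsilon(D,\nu_r)$, already supplies the strict inequality
$$
\varepsilon(D,\nu_r)<\sqrt{\frac{D^2}{[\vol(\nu_r)]^{-1}}}.
$$
What still has to be produced is an integral curve realizing the infimum. For this I would work on the surface $\tilde S$ that resolves $\nu_r$ and consider the family of $\mathbb{R}$-divisor classes
$$
L_t:=D^{*}-t\sum_{i=1}^{r}\nu_r(\fm_i)\,E_i^{*},\qquad t\geq 0.
$$
By the projection formula together with the standard blowup identities relating proper-transform multiplicities to $\nu_r(\mathfrak{m}_i)$, one checks that $L_t\cdot\widetilde{C}=(D\cdot C)-t\,\nu_r(\varphi_C)$ for the strict transform $\widetilde{C}$ of any integral curve $C\subset S$. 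Hence the supremum of the parameters $t$ for which $L_t$ is nef equals $\varepsilon(D,\nu_r)$, and at $t_0:=\varepsilon(D,\nu_r)$ the boundary of $\Nef(\tilde S)$ is met. Non-minimality combined with Theorem~\ref{Intro_thm:triangulo}(e) guarantees that this boundary point is a genuine Zariski wall of the segment, so that the wall is cut out by the class of an irreducible curve $\widetilde{C}_0$ on $\tilde S$ with $L_{t_0}\cdot\widetilde{C}_0=0$. Setting $C:=\pi(\widetilde{C}_0)$ and unwinding the identity $L_{t_0}\cdot\widetilde{C}_0=0$ yields the desired equality $(D\cdot C)/\nu_r(\varphi_C)=\varepsilon(D,\nu_r)$.

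\emph{Main obstacle.} The delicate step is the one ensuring that the infimum defining $\varepsilon(D,\nu_r)$ is actually attained by an integral curve coming from $S$, rather than by a limit of curves or by an exceptional divisor of $\pi$ (which would give $\nu_r(\varphi_C)=0$ and spoil the ratio). In the classical Seshadri framework this is handled by Kleiman--Mori compactness together with the Hodge index theorem; here the correct substitute is the Zariski-chamber description in Theorem~\ref{Intro_thm:triangulo}(e), which pinpoints where $L_t$ leaves the nef cone as we vary $t$ across a wall. Translating this wall back into an integral curve on $S$ with $\nu_r(\varphi_C)>0$, and excluding the possibility that the extremal class is entirely supported on the exceptional locus of $\pi$, is where most of the careful bookkeeping in the proof will lie.
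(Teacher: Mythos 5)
Your ``if'' direction and the strict inequality $\varepsilon(D,\nu_r)<\sqrt{D^2/\overline{\beta}_{g+1}(\nu_r)}$ are correct and coincide with the paper's argument (both reduce to the equivalence (a)$\Leftrightarrow$(f) of Theorem \ref{thm:triangulo}, respectively to Lemma \ref{lemm:infimo_seshadri} and Remark \ref{rem:Bounds_Sdnu}). The problem is the main existence claim, which is the heart of the first statement and which you yourself flag as the ``main obstacle'' without resolving it. You assert that at $t_0=\varepsilon(D,\nu_r)$ ``the wall is cut out by the class of an irreducible curve $\widetilde{C}_0$ with $L_{t_0}\cdot\widetilde{C}_0=0$'' and then set $C=\pi(\widetilde{C}_0)$. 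But orthogonality to $L_{t_0}$ does not single out a curve coming from $S$: by the proximity equalities every exceptional component $E_i$, $1\leq i\leq r-1$, satisfies $L_t\cdot E_i=0$ for \emph{all} $t$, and if $\widetilde{C}_0$ were one of these, $\pi(\widetilde{C}_0)$ is a point, $\nu_r(\varphi_C)$ is undefined, and the ratio collapses. Moreover, invoking Theorem \ref{thm:triangulo}(e) only tells you that the segment $\{[D^*-tE_r]\}$ meets a second Zariski chamber somewhere; you have not shown that the wall occurs precisely at the nef threshold of the family $L_t$ (equivalently at $t=\varepsilon(D,\nu_r)\overline{\beta}_{g+1}(\nu_r)$ on the segment), nor that the relevant class is still big there, which is needed for any wall-crossing statement inside $\mathrm{Big}(\tilde{S})$.

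The paper closes exactly these gaps. Non-minimality plus Theorem \ref{thm:triangulo}(d) and Proposition \ref{prop:sDnu_mupico} give the strict inequality $\varepsilon(D,\nu_r)\overline{\beta}_{g+1}(\nu_r)<\hat{\mu}_D(\nu_r)$; Proposition \ref{prop:decZar} computes the Zariski decomposition of $D^*-tE_r$ up to this threshold, showing its positive part is your $L_{t/\overline{\beta}_{g+1}(\nu_r)}$ and its negative part is supported \emph{exactly} on $E_1,\dots,E_{r-1}$, with no nef positive part beyond the threshold; Corollary \ref{cor:BoundZarCham} then shows $P_{\varepsilon}$ is big and nef and that $D_\varepsilon$ sits on a chamber boundary; and \cite[Proposition 1.7]{BauKurSze} produces an irreducible curve in $\mathrm{Null}(P_\varepsilon)$ that enters the negative part just past the wall, hence is none of the $E_i$ ($i<r$ are already components, and $P_\varepsilon\cdot E_r>0$), so it is the strict transform of an integral curve $C\subset S$; finally ampleness of $D$ forces $\nu_r(\varphi_C)>0$ and $P_\varepsilon\cdot\widetilde{C}=0$ unwinds to $D\cdot C=\varepsilon(D,\nu_r)\nu_r(\varphi_C)$. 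Without an argument of this kind (or some substitute for the Kleiman--Mori compactness you mention), your proof of the attainment of the infimum is incomplete, even though the surrounding skeleton matches the paper's.
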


Theorems \ref{Intro_thm:triangulo} and \ref{Intro_prop:submaximal_curve} could be regarded as analogue statements, in the valuative setting, to those related to the Nagata conjecture and involving Seshadri constants, submaximal curves and wonderful rays. Moreover, in Proposition \ref{prop:numsubmaximalcurves}, we prove that, when $\nu_r$ is not minimal with respect to $D,$ the Picard number of $S,\rho(S),$ is a bound on the number $n$ of submaximal curves which compute $\varepsilon(D,\nu_r)$ (Definition \ref{computes}). Furthermore, $2+\rho(S)-n$ is a bound on the number of Zariski chambers that the segment $\{[D^*-tE_r]\mid 0\leq t\leq \hat{\mu}_D(\nu_r)\}$ crosses (see Corollary \ref{cor:boundnumberZarCha_submaximalcurves}).

When $S= \mathbb{P}^2$ and $D=L$, we give in Corollary \ref{cor_val_Nag_conj} several equivalent statements to the valuative Nagata conjecture (for divisorial valuations):

\begin{introconjecture}\label{ConjectureP2}
Let $\nu_r$ be a very general divisorial valuation of $\; \mathbb{P}^2$ with normalized volume $\text{\emph{vol}}^N(\nu_r)$. Denote by $L$ a general projective line on $\mathbb{P}^2$ and by $\widetilde{\mathbb{P}^2}$ the surface defined by the valuation $\nu_r$. If $[\text{\emph{vol}}^N(\nu_r)]^{-1}\geq 9,$ then the following equivalent properties are satisfied:
\begin{enumerate}[(a)]
\item The divisorial valuation $\nu_r$ is minimal (with respect to $L$).

\item If $\nu$ is any exceptional curve valuation defined by a flag $\Big\{\widetilde{\mathbb{P}^2}\supset E_r\supset\{q\}\Big\}$, where $E_r$ is the exceptional divisor defining $\nu_r$ and $q$ is any closed point of $E_r$, then the Newton-Okounkov body of $L$ with respect to $\nu$ coincides with the triangle $\mathfrak{T}_L(\nu)$, see Proposition \ref{prop:335}.

\item The $\mathbb{R}$-divisor
$$
P_\mu:=L^* - \dfrac{\hat{\mu}(\nu_r)}{[\text{\emph{vol}}(\nu_r)]^{-1}} \sum_{i=1}^r \nu_r (\mathfrak{m}_i) \cdot E_i^*
$$
is nef and satisfies that $P_\mu^2=0$.

\item It holds that $\hat{\mu}(\nu_r)=\varepsilon(L,\nu_r)[\text{\emph{vol}}(\nu_r)]^{-1}$.

\item The segment $\{[L^*-tE_r]\mid 0\leq t\leq \hat{\mu}_L(\nu_r)\}$ crosses only one Zariski chamber of the big cone of $\widetilde{\mathbb{P}^2}$.

\item It holds that $$\epsilon(L,\nu_r)=\sqrt{\frac{1}{[\text{\emph{vol}}(\nu_r)]^{-1}}}.$$

\item There is no submaximal curve computing the constant $\epsilon(L,\nu_r)$.
\end{enumerate}
\end{introconjecture}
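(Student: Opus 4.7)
The statement has two parts that must both be established: the mutual equivalence of properties (a)--(g), and the claim that all of them hold whenever $\nu_r$ is very general and $[\text{vol}^N(\nu_r)]^{-1}\geq 9$.

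For the equivalences, I would specialize Theorem \ref{Intro_thm:triangulo} to $S=\mathbb{P}^2$ and $D=L$, where $L^2=1$. Under this specialization the normalization is trivial, so $\text{vol}^N(\nu_r)=\text{vol}(\nu_r)$ and $\hat\mu(\nu_r)=\hat\mu_L(\nu_r)$; consequently Theorem \ref{Intro_thm:triangulo} yields (a)$\Leftrightarrow$(b)$\Leftrightarrow$(c)$\Leftrightarrow$(d)$\Leftrightarrow$(e)$\Leftrightarrow$(f) directly. To include (g), I would appeal to Theorem \ref{Intro_prop:submaximal_curve}: failure of (a) produces an integral curve $C$ with $\frac{L\cdot C}{\nu_r(\varphi_C)}=\varepsilon(L,\nu_r)<\sqrt{1/[\text{vol}(\nu_r)]^{-1}}$, which is a submaximal curve computing $\varepsilon(L,\nu_r)$ and so negates (g); conversely, the existence of any submaximal curve computing $\varepsilon(L,\nu_r)$ forces strict inequality in (f) and hence non-minimality.

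For the implication from the hypothesis, the natural target is property (g), the valuative analogue of Nagata's non-existence of submaximal curves. The plan I would follow mirrors the classical Nagata strategy: (i) stratify the space of divisorial valuations $\nu_r$ of fixed combinatorial type by the numerical data of a hypothetical submaximal integral curve $C$ (degree $d$ together with the multiplicities along the centers of $\nu_r$); (ii) show that on each stratum the inequality $L\cdot C < \sqrt{1/[\text{vol}(\nu_r)]^{-1}}\,\nu_r(\varphi_C)$ cuts out a proper Zariski-closed subset, using semicontinuity of $\nu_r(\varphi_C)$ along families of valuations; (iii) observe that a very general $\nu_r$ avoids the countable union of these loci; (iv) reduce this to finitely many numerical types by combining the hypothesis $[\text{vol}^N(\nu_r)]^{-1}\geq 9$ with the asymptotic bounds of \cite{GalMonMoy} and the wonderful-ray interpretation in (c). An alternative attack targets (c) directly, aiming to establish nefness of $P_\mu$ on $\widetilde{\mathbb{P}^2}$ by degenerating $\nu_r$ to a valuation whose centers collapse to $r$ very general points of $\mathbb{P}^2$, and then invoking semicontinuity of nefness in families.

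The main obstacle is that this second part \emph{is} the valuative Nagata conjecture, which, via the specialization above, implies the classical Nagata conjecture. The classical conjecture has resisted proof for over sixty years and is known only in the square case, so no realistic plan can hope for completeness here. The role of the strategy above is to reduce the problem to intersection-theoretic inequalities on $\widetilde{\mathbb{P}^2}$ or to finitely many numerical data. Genuine progress would most likely come either from restricted classes of valuations---for instance those arising from curvettes of cuspidal branches, or those factoring through chains of free points---or from asymptotic refinements of the results in \cite{GalMonMoy} that push the hypothesis $[\text{vol}^N(\nu_r)]^{-1}\geq 9$ down from the known asymptotic thresholds.
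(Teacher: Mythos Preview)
Your reading of the statement is exactly right: it is a \emph{conjecture}, and the only thing the paper actually proves is the mutual equivalence of (a)--(g); the assertion that these hold for very general $\nu_r$ with $[\text{vol}^N(\nu_r)]^{-1}\geq 9$ is left open. Your derivation of the equivalences---specializing Theorem~\ref{Intro_thm:triangulo} to $S=\mathbb{P}^2$, $D=L$ for (a)--(f), and using Theorem~\ref{Intro_prop:submaximal_curve} for (g)---is precisely what the paper does (see Corollary~\ref{cor_val_Nag_conj}, Remark~\ref{remark:submaximal_onlynonminimal}, and Corollary~\ref{cor:epsilon_propertyP2}(a)).

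One small slip: specializing to $\mathbb{P}^2$ does \emph{not} make the normalization trivial; in general $\text{vol}^N(\nu_r)=\overline{\beta}_0(\nu_r)^2\cdot\text{vol}(\nu_r)\neq\text{vol}(\nu_r)$. This is harmless for your argument, since items (c), (d), (f) in both Theorem~\ref{Intro_thm:triangulo} and the conjecture are stated with the \emph{unnormalized} volume, so the specialization goes through verbatim; the normalized volume appears only in the hypothesis of the conjecture. Your discussion of strategies for the open implication is reasonable but, as you acknowledge, speculative---the paper makes no attempt at it and simply records the conjecture.
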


 In addition, in Corollary \ref{cor:epsilon_propertyP2} we prove that in this last case ($S=\mathbb{P}^2,D=L$) it holds that $\epsilon(L, \nu)= \frac{1}{\hat{\mu}(\nu)},$ and we determine a bound for the valuative Seshadri constant that depends only on the dual graph of $\nu_r$ and the value $\nu_r(\varphi_H)$ of a tangent line $H$ to $\nu_r$.

Apart from the introduction, the paper is structured in two sections. Section \ref{sec:pre} introduces the information to develop Section \ref{sec:valNagConj} which contains our results. It is divided into two subsections, Subsection \ref{subsec:seshadri_constant} on Seshadri constants and Subsection \ref{subsec:minimalvaluation} which studies minimality of valuations.

As mentioned, we conclude by posing the following valuative Nagata-type problem: To determine (only combinatorial if possible) conditions for a very general divisorial plane valuation of a smooth surface $S$ and an ample divisor $D$ on $S$ implying the minimality of the valuation with respect to $D$ (and, therefore, those equivalent statements given in Theorem A).

\section{Preliminaries}\label{sec:pre}
We devote this section to introduce some of the ingredients we will use in our results. We also recall some properties concerning them.

\subsection{Convex cones associated to surfaces}\label{subsec:cones}
Let $S$ be a smooth (complex) projective surface. Denote by Pic$(S)$ the Picard group of $S$ and by Num$(S)$ the quotient group given by the classes of Pic$(S)$ modulo numerical equivalence. Let Num$_\mathbb{R}(S)$ be the $\mathbb{R}$-vector space Num$(S)\otimes\mathbb{R}$ and $\cdot$ the  pairing induced by the intersection product in Pic$(S)$. Set $\text{NE}(S)\subset \text{Num}_\mathbb{R}(S)$ the effective cone (or cone of curves) of $S$, that is, the convex cone of $\text{Num}_\mathbb{R}(S)$ generated by the numerical equivalence classes of effective divisors on $S$, and denote its closure by $\overline{\text{NE}}(S)$, usually named the Mori cone of $S$. The divisors whose classes belong to $\overline{\text{NE}}(S)$ are called pseudoeffective. Moreover, denote by Nef$(S)$ (res\-pectively, Big$(S)$) the convex cone in $\text{Num}_\mathbb{R}(S)$ generated by the classes of nef (respectively,  big) divisors on $S$.

For a pseudoeffective divisor $D$ we write $D=P_D+N_D$ its Zariski decomposition, where $P_D$ (respectively, $N_D$) is the \emph{positive} (respectively, \emph{negative}) \emph{part of} $D$. The Zariski decomposition can also be defined for a pseudoeffective $\mathbb{Q}$ or $\mathbb{R}$-divisor (see \cite{Fuj} and \cite{Zar1}). \emph{Zariski chambers} are the subcones $\Sigma_P$ of the decomposition of the cone Big$(S)$:
$$
\mathrm{Big}(S)=\bigcup_{P ~\mathrm{big}~ \text{and}~ \mathrm{nef}} \Sigma_P,
$$
defined as
\begin{align*}
\Sigma_P &= \{[D] \in \mathrm{Big}(S) \ | \ \mathrm{Neg}(D)=\mathrm{Null}(P)\},\text{ where }\\
\end{align*}
\begin{align*}
\mathrm{Neg}(D)&:=\{C \ | \ C \mbox{ is an irreducible component of } N_D\}\text{ and } \\
\mathrm{Null}(D)&:=\{C\ | \ C \mbox{ is an  irreducible curve with } C \cdot D = 0\}.
\end{align*}
Notice that $\mathrm{Neg}(D)\cup\mathrm{Null}(D)\subset\mathrm{Null}(P_D)$, where the union may be disjoint or not.

\medskip

Fix an ample divisor $H$ on $S$ and define
$$
Q(S):=\Big\{[D]\in \text{Num}_\mathbb{R}(S) \ |\ [D]^2\geq 0 \mbox{ and } [D]\cdot [H] \geq 0\Big\}.
$$
Then, one has the chain of inclusions $\text{Nef}(S)\subset Q(S)\subset\overline{\text{NE}}(S)$ and also the following result:
\begin{proposition}\label{prop:Lemma1}
Let $S$ be a smooth projective surface and $D$ a divisor on $S$. Then,  the self-intersection of $D$ is positive if and only if $D\cdot x > 0$ for any $x\in Q(S)\setminus\{[0]\}.$
\end{proposition}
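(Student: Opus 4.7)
The backbone of both directions is the Hodge Index Theorem: the intersection form on $\Num_\mathbb{R}(S)$ has signature $(1,\rho(S)-1)$, so $Q(S)$ is precisely the closed nappe of the light cone containing $[H]$. I would start by recording the preliminary fact that $x\cdot H>0$ for every $x\in Q(S)\setminus\{[0]\}$: this is immediate from Hodge index applied to the ample class $[H]$, since $H^\perp$ is negative definite and therefore cannot contain a nonzero $x$ with $x^2\geq 0$. This observation is the single tool that pins down the nappe throughout the argument.

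For $(\Rightarrow)$, assuming $D^2>0$ (and $D\cdot H>0$, which fixes the nappe containing $[D]$), the plan is to apply the Hodge index inequality $(D\cdot x)^2\geq D^2\, x^2$ to any $x\in Q(S)\setminus\{[0]\}$. When $x^2>0$, the Gram matrix of $\{D,x\}$ is Lorentzian and both vectors pair positively with $[H]$; the standard reverse Cauchy--Schwarz on the positive nappe of a $(1,1)$-form then yields $D\cdot x>0$. When $x^2=0$ with $x\neq 0$, a vanishing $D\cdot x=0$ would place $x$ in the negative-definite subspace $D^\perp$, forcing $x^2<0$, a contradiction; so $D\cdot x\neq 0$, and a continuity argument from the interior case fixes its sign to be positive.

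For $(\Leftarrow)$, I would assume $D\cdot x>0$ for all $x\in Q(S)\setminus\{[0]\}$ and, taking $x=[H]$, first deduce $D\cdot H>0$. The goal is to rule out $D^2\leq 0$ by exhibiting in each case an explicit $x\in Q(S)\setminus\{[0]\}$ with $D\cdot x\leq 0$. The case $D^2=0$ is immediate: $[D]\in Q(S)\setminus\{[0]\}$ and $D\cdot D=0$. For $D^2<0$, I would decompose $[D]=\alpha[H]+[D']$ orthogonally with $D'\cdot H=0$; Hodge index gives $(D')^2<0$, and $D^2<0$ translates into $\alpha\sqrt{H^2}<\sqrt{-(D')^2}$. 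Then the class $x:=\beta[H]+s[D']$, with $\beta>0$ and $s$ chosen so that $x^2=0$, lies in $Q(S)\setminus\{[0]\}$, and a short expansion (using $H\cdot D'=0$) shows $D\cdot x<0$, the desired contradiction.

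The main obstacle is sign bookkeeping rather than any genuine subtlety: the Hodge index inequality by itself controls only $(D\cdot x)^2$ and gives no sign information, so one has to keep careful track of the nappe containing each class. The preliminary fact $x\cdot H>0$ on $Q(S)\setminus\{[0]\}$ is exactly what identifies this set with a single nappe, and it is what allows the Hodge index inequality to be leveraged with a definite sign in both implications. Beyond Hodge index and this single observation, no further input is required.
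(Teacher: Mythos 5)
Your argument is correct in substance but takes a genuinely different route from the paper's. The paper diagonalizes the intersection form via the Hodge index theorem (a basis $\{\mathbf{h}_0,\dots,\mathbf{h}_n\}$ with $\mathbf{h}_0$ proportional to $[H]$), rewrites the condition ``$D\cdot x>0$ for all $x\in Q(S)\setminus\{[0]\}$'' as $\sum_i y_id_i<d_0$ over the ball $\sum_i y_i^2\leq 1$, and settles both implications at once by computing the maximum $\sqrt{\sum_i d_i^2}$ with Lagrange multipliers. You instead argue coordinate-freely: for the forward direction you use the reverse Cauchy--Schwarz inequality on the plane spanned by $[D]$ and $x$ together with your preliminary fact that $x\cdot H>0$ identifies $Q(S)\setminus\{[0]\}$ with a single nappe of the light cone (with the limiting argument handling $x^2=0$); for the converse you exhibit explicit null classes $\beta[H]+s[D']$ from the orthogonal decomposition $[D]=\alpha[H]+[D']$. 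Both proofs rest only on the Hodge index theorem; yours makes the cone geometry explicit and produces concrete witnesses for the converse, while the paper's optimization is shorter because one computation treats both directions simultaneously. One caveat: your parenthetical assumption $D\cdot H>0$ in the forward implication is not among the stated hypotheses, and it is genuinely needed (for $D=-H$ one has $D^2>0$ but $D\cdot H<0$, so the implication fails as literally stated); note, however, that the paper's proof has the same hidden requirement, since its final equivalence between $\sum_i d_i^2<d_0^2$ and $\sum_i y_id_i<d_0$ for all $\overline{y}$ with $\sum_i y_i^2\leq 1$ silently uses $d_0>0$, i.e.\ $D\cdot H>0$, which does hold in the paper's applications such as Theorem \ref{thm_criterion_amplitude}. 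So state that positivity explicitly rather than assuming it in passing.
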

\begin{proof}
By Hodge index theorem (\cite[Chapter V, Theorem 1.9]{Har}), there exists a basis
 $B=\{{\bf h}_0,{\bf h}_1,\ldots, {\bf h}_{n}\}$ of $\text{Num}_\mathbb{R}(S)$ such that ${\bf h}_0$ is a (real) multiple of $[H]$, ${\bf h}_0^2=1$, ${\bf h}_0\cdot {\bf h}_i=0$ and ${\bf h}_i\cdot {\bf h}_j=-\delta_{ij},$ for $1\leq i,j\leq n,$ where $\delta_{ij}$ is the Kronecker delta. Consider $x\in Q(S)\setminus\{0\}$ and let $(x_0,x_1,\ldots,x_n)$ be its vector of coordinates in the basis $B$. Since $x\in Q(S)\setminus\{0\}$, it holds that $x_0>0$. Set $\overline{y}=(y_1,y_2,\ldots,y_n)\in\mathbb{R}^n$ such that $y_i=x_i/x_0,$ for $1\leq i\leq n$. Notice that the condition $D\cdot x> 0$ for any $x\in Q(S)\setminus\{0\}$ is equivalent to the fact that $\sum_{i=1}^ny_id_i<d_0$ for any $\overline{y}\in\mathbb{R}^n$ satisfying the inequality $\sum_{i=1}^ny_i^2\leq 1,$ where $(d_0,d_1,\ldots,d_n)$ are the coordinates of $[D]$ in the basis $B$. Let $f:\mathbb{R}^n\to\mathbb{R}$ be the map  $f(y_1,y_2,\ldots,y_n) =\sum_{i=1}^nd_iy_i.$ Lagrange multipliers method proves that $\sqrt{\sum_{i=1}^nd_i^2}$ is the maximum of $f$ under the restriction $\sum_{i=1}^ny_i^2\leq 1.$ Consequently, $D^2>0$ (that is, $\sum_{i=1}^nd_i^2<d_0^2$) if, and only if, $\sum_{i=1}^ny_id_i<d_0$ for any $\overline{y}\in\mathbb{R}^n$ such that $\sum_{i=1}^ny_i^2\leq 1,$ which proves the result.
\end{proof}

Let $W$ be a convex cone of $\text{Num}_\mathbb{R}(S)$ and denote by $W^\vee$ its dual cone. For any element $z\in W$, set $\{z\}^\perp =\{x\in\text{Num}_\mathbb{R}(S)\ | \ z\cdot x=0\}.$
Subcones of the form $W\cap \{z\}^\perp,$ for some $z\in W^\vee$, are named \emph{faces} of $W$.  A face $\mathcal{F}$ of $W$ is said to be \emph{extremal} if, for any pair of elements $z_1,z_2\in W\setminus \{0\}$ such that $z_1+z_2\in\mathcal{F},$ it holds that $z_1,z_2\in \mathcal{F}.$ A one-dimensional extremal face is an \emph{extremal ray}.

By \cite[Lemma 1.22]{KolMor}, if $C$ is an irreducible curve on a surface $S$ such that $C^2\leq 0,$ then its numerical equivalence class belongs to the boundary of  $\overline{\text{NE}}(S)$. In addition, if $C^2< 0,$ its class generates an extremal ray of $\overline{\text{NE}}(S).$

The following result about extremal rays of $\overline{\text{NE}}(S)$ will be useful, see \cite[Lemma 1.2]{DeFer} or \cite[Proposition 3]{GalMon2} for a proof.
\begin{proposition}\label{prop:extremalray}
Every Cauchy sequence of extremal rays of the Mori cone $\overline{\text{\emph{NE}}}(S)$ converges to a ray in the boundary of $Q(S).$
\end{proposition}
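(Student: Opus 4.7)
The plan is to combine the Hodge-index signature on $\text{Num}_{\mathbb{R}}(S)$ with adjunction to show that any accumulation point of distinct extremal rays must have vanishing self-intersection. I would fix the ample class $H$ used in the definition of $Q(S)$ and, for each extremal ray $R_n$ in the sequence, choose the representative $r_n$ with $r_n\cdot H=1$. Since $\text{Num}_{\mathbb{R}}(S)$ is finite-dimensional, the Cauchy condition gives a limit $r_n\to r$, and closedness of $\overline{\text{NE}}(S)$ yields $r\in\overline{\text{NE}}(S)\setminus\{0\}$, which generates a well-defined ray. I would then argue that $r_n^2\leq 0$ for every $n$: if $r_n^2>0$, the Hodge-index signature combined with $r_n\cdot H>0$ would place $r_n$ in the interior of $Q(S)$, which is open in $\text{Num}_{\mathbb{R}}(S)$; since $Q(S)\subset\overline{\text{NE}}(S)$, this would give $r_n$ an open neighborhood inside $\overline{\text{NE}}(S)$, contradicting the extremality of $R_n$. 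Continuity of the intersection pairing then forces $r^2\leq 0$.

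For the reverse inequality I would pass to subsequences of pairwise distinct rays. If infinitely many satisfy $r_n^2=0$, the limit $r$ already lies on the boundary of $Q(S)$ by closedness; otherwise I may assume $r_n^2<0$ for every $n$. In that case each $R_n$ is generated by the class of an irreducible curve $C_n\subset S$: writing the Zariski decomposition $r_n=P_n+N_n$, the condition $r_n^2<0$ forces $N_n\neq 0$; extremality together with $P_n^2\geq 0$ forces $P_n=0$; and if $N_n=\sum_i a_iE_i$ had two or more nonzero coefficients, splitting $N_n$ as a sum of two nontrivial classes of $\overline{\text{NE}}(S)$ would contradict extremality via the linear independence of the $E_i$, so $N_n$ is a positive multiple of a single irreducible curve $C_n$. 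Writing $r_n=C_n/(C_n\cdot H)$ gives $r_n^2=C_n^2/(C_n\cdot H)^2$. Pairwise distinctness of the rays $R_n$ yields pairwise distinct lattice classes $[C_n]\in\text{Num}(S)$, and Kleiman's criterion (ampleness of $H$) ensures that $\overline{\text{NE}}(S)\cap\{x\cdot H\leq M\}$ is compact for every $M>0$; containing only finitely many lattice classes, this forces $C_n\cdot H\to\infty$.

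To close the argument I would invoke adjunction. Integrality of $C_n$ on the smooth surface $S$ yields $p_a(C_n)\geq 0$, hence $C_n^2\geq -2-C_n\cdot K_S$, where $K_S$ is the canonical divisor. Dividing by $(C_n\cdot H)^2$ produces
$$
r_n^2\;\geq\;-\frac{2}{(C_n\cdot H)^2}-\frac{r_n\cdot K_S}{C_n\cdot H}.
$$
Compactness of $\{x\in\overline{\text{NE}}(S):x\cdot H=1\}$ bounds $|r_n\cdot K_S|$ uniformly in $n$, so both terms on the right tend to $0$; combined with $r_n^2\leq 0$, this gives $r_n^2\to 0$ and hence $r^2=0$, placing $r$ on the boundary of $Q(S)$. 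The main obstacle I anticipate is the structural step asserting that extremal rays of negative self-intersection in $\overline{\text{NE}}(S)$ are generated by classes of irreducible curves; once this is secured, the remaining estimate is a routine compactness-plus-adjunction calculation.
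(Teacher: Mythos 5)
Your argument is correct and follows essentially the route of the proof the paper points to in \cite[Lemma 1.2]{DeFer} and \cite[Proposition 3]{GalMon2}: reduce to extremal rays spanned by irreducible curves of negative self-intersection, use compactness of $\{x\in\overline{\text{NE}}(S)\ |\ x\cdot H=1\}$ to force $C_n\cdot H\to\infty$, and conclude via adjunction that the normalized self-intersections tend to $0$. The only caveat is the one you implicitly acknowledge by passing to a pairwise-distinct subsequence: the statement must be read as concerning limits of sequences containing infinitely many distinct extremal rays (equivalently, accumulation rays), since an eventually constant sequence at, say, a $(-1)$-curve ray is Cauchy but does not converge to the boundary of $Q(S)$.
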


\subsection{Plane valuations}

Let $R$ be a two-dimensional local regular ring and $\mathfrak{m}$ its maximal ideal. Denote by $K$ the quotient field of $R$ and set $K^*=K\setminus\{0\}$. A valuation $\nu$ of $K$ is a surjective map $\nu:K^*\to G,$ where $G$ is a totally ordered commutative group (the group of values of $\nu$), such that, for $f,g\in K^*$, it satisfies
$$
\nu (f+g)\geq \min\{\nu(f),\nu(g)\}\text{ and } \nu(fg)=\nu(f)+\nu(g).
$$
$R_\nu:=\{f\in K^* \ | \ \nu(f)\geq 0  \}\cup \{0\}$ is a local ring named the \emph{valuation ring} of $\nu,$ whose maximal ideal is $\mathfrak{m}_\nu:=\{f\in K^* \ | \ \nu(f)> 0  \}\cup \{0\}$. When the equality $R \cap \mathfrak{m}_\nu=\mathfrak{m}$ holds, $\nu$ is centered at $R$ and it is called a \emph{plane valuation}.

Zariski introduced an interesting geometric perspective of plane valuations.

\begin{theorem}\label{Thm_Zar_blowingups}
Plane valuations (up to equivalence) of $K$ centered at $R$ correspond one-to-one to simple sequences of point blowups of the scheme $\text{\emph{Spec} } R$.
\end{theorem}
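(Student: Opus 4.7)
The plan is to build the two directions of the correspondence by an iterative blowup procedure and an inverse limit construction, then verify they are mutual inverses.

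For the forward direction, I would start with a plane valuation $\nu$ of $K$ centered at $R=R_0$ with maximal ideal $\mathfrak{m}_0$. Since $R\subset R_\nu$ and $\mathfrak{m}_0 = R\cap \mathfrak{m}_\nu$, for any system of regular parameters $x,y$ of $R$ one of the values $\nu(x), \nu(y)$ is minimal; after possibly swapping, assume $\nu(x)\leq \nu(y)$. Then $y/x$ lies in $R_\nu$, so the birational transform $R_0[y/x]_{\mathfrak{p}_1}$, where $\mathfrak{p}_1:=\mathfrak{m}_\nu\cap R_0[y/x]$, is a two‑dimensional regular local ring $R_1$ strictly dominated by $R_\nu$ (one needs the observation that $\mathfrak{p}_1$ is the unique prime of $R_0[y/x]$ lying over $\mathfrak{m}_0$ and containing $\mathfrak{m}_\nu\cap R_0[y/x]$, which picks out a unique closed point $p_1$ on the exceptional divisor of $\mathrm{Spec}(R_0)\leftarrow \mathrm{Spec}(R_0[y/x])$). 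Iterating this construction produces a simple sequence of point blowups $p_1,p_2,\ldots$ with $R_\nu$ dominating every $R_i$.

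For the backward direction, given a simple sequence of point blowups of $\mathrm{Spec}(R)$ with local rings $R=R_0\subset R_1\subset R_2\subset\cdots$, I would define $S:=\bigcup_{i\geq 0} R_i$, a local ring dominating every $R_i$. The standard argument (essentially Abhyankar's) shows $S$ is a valuation ring of $K$: any $f\in K^*$ can be written as $u/v$ with $u,v\in R_i$ for some $i$, and after enough further blowups one of $u,v$ divides the other in $R_j$, so either $f\in R_j\subset S$ or $f^{-1}\in R_j\subset S$. The associated valuation $\nu$ is then centered at $R_0=R$ since $\mathfrak{m}_\nu\cap R = \mathfrak{m}_0$ by construction.

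To see these operations are mutually inverse, I would check that starting from $\nu$, running the forward procedure, and then taking the union of the resulting chain recovers a valuation ring contained in $R_\nu$; equality follows because any $f\in R_\nu$ must, by the iterated blowup, eventually become a unit or regular element in some $R_i$, while any $g\notin R_\nu$ has $g^{-1}$ become so. Conversely, starting from a sequence of blowups, the valuation produced has the same centers $p_1,p_2,\ldots$ because each $p_{i+1}$ is forced as the unique point of the exceptional divisor of $R_i$ lying in the valuation ring. Equivalence of valuations on both sides corresponds to choosing compatible generators, hence the correspondence descends to the quotient by equivalence.

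The main obstacle is showing well-definedness of the forward procedure when the sequence is infinite and identifying the limit ring correctly as a valuation ring of $K$; this involves the standard case split between the different classes of plane valuations (divisorial, irrational, curve, and exceptional curve) and in particular ensuring that no other primes of $R_i[y_i/x_i]$ could interfere with the selection of the next center. Once those technicalities are settled, uniqueness of the center at each finite stage makes the bijection essentially formal.
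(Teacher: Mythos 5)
The paper itself offers no proof of this statement: it is Zariski's classical correspondence, quoted as background (the standard references being Zariski, Abhyankar's ``On the valuations centered in a local domain'', Spivakovsky's classification paper, or Chapter 6 of Favre--Jonsson). Your sketch follows exactly the classical route — iterated quadratic transforms in one direction, the union of the chain of local rings in the other — so the overall strategy is the right one.

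There is, however, a concrete gap: your argument as written loses precisely the divisorial valuations, i.e.\ the finite sequences, which are the case the paper actually uses. In the forward direction you assert that $R_1 = R_0[y/x]_{\mathfrak{p}_1}$ with $\mathfrak{p}_1 = \mathfrak{m}_\nu \cap R_0[y/x]$ is always a two-dimensional regular local ring; this is false when the center of $\nu$ on the blowup is not a closed point but the generic point of the exceptional divisor (equivalently, when $\mathfrak{p}_1$ has height one, e.g.\ $\mathfrak{p}_1=(x)$ for $\nu=\operatorname{ord}_{\mathfrak{m}}$). In that case the localization is a DVR equal to $R_\nu$ itself and the iteration must \emph{stop}; this termination is exactly what makes divisorial valuations correspond to finite sequences, and it needs to be stated, not subsumed under ``technicalities''. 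Dually, in the backward direction the recipe $S:=\bigcup_{i\ge 0}R_i$ only works for infinite sequences: for a finite sequence ending at $R_n$ the union is just the two-dimensional regular local ring $R_n$, which is not a valuation ring, and the associated valuation must instead be defined as $\operatorname{ord}_{E_n}$ along the last exceptional divisor (equivalently, the $\mathfrak{m}_{R_n}$-adic order valuation of $R_n$). You should also make explicit that Abhyankar's divisibility argument for the infinite case requires an actual decreasing invariant (write $u/v$ in lowest terms in the UFD $R_j$ and track $\operatorname{ord}_{R_j}(u)+\operatorname{ord}_{R_j}(v)$ under a quadratic transform); as stated, ``after enough further blowups one of $u,v$ divides the other'' is the whole content of the lemma rather than a proof of it. With the finite/infinite dichotomy handled and that lemma supplied, the rest of your outline (uniqueness of the next center, compatibility with equivalence of valuations) is the standard and correct completion.
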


Let $\nu$ be a plane valuation. By Theorem \ref{Thm_Zar_blowingups}, $\nu$ determines a simple sequence of blowups:
\begin{equation}\label{Eq_sequencepointblowingups_valuation}
\pi: \cdots \rightarrow T_n\xrightarrow{\pi_n} T_{n-1}\rightarrow \cdots \rightarrow T_1 \xrightarrow{\pi_1} T=\text{Spec }R,
\end{equation}
where $\pi_1$ is the blowup at the closed point $p=p_1\in T$ defined by the maximal ideal $\mathfrak{m}_1=\mathfrak{m}$ and $\pi_{i+1}, i\geq 1,$ is the blowup at the unique closed point $p_{i+1}\in T_i$ that belongs to the exceptional divisor $E_i$ created by $\pi_i$ and such that the plane valuation $\nu$ is centered at $\mathcal{O}_{T_i,p_{i+1}}$. This type of sequences, where each blowup is performed at a point of the last created exceptional divisor, are named \emph{simple}.

The set $\mathcal{C}_\nu:=\{p_i\}_{i\geq 1}$ is called the \emph{configuration of centers} of $\nu$. We say that $p_i$ is proximate to $p_j$, $i>j,$ (denoted $p_i\to p_j$) when $p_i \in E_j$, where $E_j$ is either the exceptional divisor obtained by blowing up $p_j$ or any of its strict transforms. Moreover, a center $p_i$ is called \emph{satellite} if there exists $j<i-1$ satisfying $p_i\to p_j$; otherwise, it is called \emph{free}.

The dual graph $\Gamma_\nu$ of a plane valuation $\nu$ is a (possibly infinite) labelled tree whose vertices represent the set of exceptional divisors created by the sequence of blowups \eqref{Eq_sequencepointblowingups_valuation} and two vertices are joined by an edge if and only if the associated divisors intersect. A label $i$ is attached to each vertex corresponding to the exceptional divisor $E_i$. Denote by $\{E_{l_j}\}_{0\leq j\leq g}$, with $l_0<l_1<\cdots<l_g$, the set of exceptional divisors (different from the last one when $\mathcal{C}_\nu$ is finite) whose associated vertices in $\Gamma_{\nu}$ have degree $1$ and, when $\mathcal{C}_\nu$ is finite, set $E_{l_{g+1}}$ the last exceptional divisor.

Spivakovsky in \cite{Spiv} classifies plane valuations into five types according to the shape of their dual graphs. We are interested in two of them: divisorial and  exceptional curve valuations (using the terminology of \cite{FavJon1}).

Specifically, a \emph{divisorial} valuation is a plane valuation whose configuration of centers is finite. Usually we denote these valuations by $\nu_n,$ where $n$ is the cardinality of its configuration of centers. The group of values of any divisorial valuation is isomorphic to $\mathbb{Z}$. In addition,
$\nu_n$ can be computed as $\nu_n(f)=c \cdot \text{ord}_{E_n}(f),$ $f\in K^*$, where $c\in\mathbb{R}\setminus\{0\}$ and $E_n$ is the last exceptional divisor created by the sequence of blowups \eqref{Eq_sequencepointblowingups_valuation} associated to $\nu_n$. Different values of $c$ give rise to equivalent valuations. Throughout this work, we assume $c=1$ when we refer to $\nu_n$. Set $\nu_n(\mathfrak{m}):=\min\{\nu_n(f)\ | \ f\in\mathfrak{m}\setminus\{0\}\};$ the equivalent valuation to $\nu_n$ given by $c=1/\nu_n(\mathfrak{m})$  is called the \emph{normalized valuation} of $\nu_n$ and denoted by $\nu_n^N$.

An \emph{exceptional curve valuation} is a plane valuation whose configuration of centers is infinite and there exists a center  $p_r$ such that all the centers $p_i$, with $i>r$, are proximate to $p_r$. Its  group of values is isomorphic to $\mathbb{Z}^2_{\text{lex}}$ (lexicographically ordered).

\subsubsection{Some invariants of plane divisorial valuations}\label{subsec:invariantsvaluation}

In this subsection we recall two of the invariants which have been considered for studying plane divisorial valuations \cite{Spiv,DelGalNun}.

Let $\nu_n$ be a divisorial valuation and $\mathcal{C}_{\nu_n}=\{p_i\}_{i=1}^n$ its configuration of centers. Denote by $\mathfrak{m}_i$ the maximal ideal corresponding to the closed point $p_i$, $i\geq 1.$ Define $\nu_n(\mathfrak{m}_i):=\min\{\nu_n(f)\ | \ f\in\mathfrak{m}_i\setminus\{0\}\}$. The  \emph{sequence of values} of $\nu_n$ is the ordered set $\left(\nu_n(\mathfrak{m}_i)\right)_{i=1}^n$. It satisfies the proximity equalities \cite[Theorem 8.1.7]{Cas}:
\begin{equation}\label{proximity_equalities}
\nu_n(\mathfrak{m}_i)=\sum_{p_j\to p_i} \nu_n(\mathfrak{m}_j), \, i\geq 1,
\end{equation}
when the set $\{p_j\in\mathcal{C}_{\nu_n} \ | \ p_j\to p_i\}$ is not empty. In addition, using the \emph{Noether formula} for valuations \cite[Theorem 8.1.6]{Cas}, the value $\nu_n(f),f\in R,$ can be computed from the sequence of values of $\nu_n$, the multiplicity of $f$ at $p_1,$ ${\rm mult}_{p_1}(f),$ and the multiplicities ${\rm mult}_{p_i}(f)$ of the strict transforms of $f$ on $T_{i-1}$ at $p_i$, $i\geq 2$:
\begin{equation}\label{noether_formula}\nu_n(f)=\sum_{i=1}^n \nu_n(\mathfrak{m}_i){\rm mult}_{p_i}(f).
\end{equation}

Now, let $\nu$ be an exceptional curve valuation. Then $\mathcal{C}_{\nu}=\{p_i\}_{i\geq 1}$  and there is a positive integer $r$ such that $p_i\to p_r$ for all $i > r$. It holds that  $\nu(\mathfrak{m}_r)=(a,b)$ and $\nu (\mathfrak{m}_i)=(0,c)$ for all $i>r$, for some $a,b,c\in\mathbb{Z}$, $a,c>0$; the remaining elements of its sequence of values (defined as in the divisorial case but with infinitely many elements) are uniquely determined by the above pairs by using the proximity equalities, which also work for exceptional curve valuations (see \cite{DelGalNun}).

Consider again a divisorial valuation $\nu_n$. Denote by $\varphi_i, 1 \leq i \leq n,$ an analytically irreducible germ of curve at $p$ whose strict transform on $T_i$ is transversal to $E_i$ at a general point of the exceptional locus. Define $\overline{\beta}_i(\nu_n):=\nu_n(\varphi_{l_i})$, $0\leq i\leq g+1$. Then, the ordered set $(\overline{\beta}_i(\nu_n))_{i=0}^{g+1}$ is named the \emph{sequence of maximal contact values} of $\nu_n$. By \cite[Remark 6.1]{Spiv}, the set $(\overline{\beta}_i(\nu_n))_{i=0}^{g}$ minimally generates the semigroup of values of $\nu_n,$ $S(\nu_n)=\nu_n(R\setminus\{0\})$. By \cite[Remark 2.3]{GalMonMoyNic2}, the value $\overline{\beta}_{g+1}(\nu_n)$ coincides with the inverse of the volume of $ \nu_n,$ $[\text{vol}(\nu_n)]^{-1},$ which is defined as
\begin{equation}\label{Eq_igualdadvolumenvaloracion_cap1}
\text{vol}(\nu_n):=\lim_{\alpha\to \infty}\dfrac{\text{length}(R/\mathcal{P}_\alpha)}{\alpha^2 /2},
\end{equation}
$\mathcal{P}_\alpha$ being the ideal $\mathcal{P}_\alpha=\{f\in R\setminus\{0\} \ | \ \nu_n(f)\geq \alpha\}\cup\{0\}$. In addition, since ${\rm mult}_{p_i}(\varphi_n)$ $=\nu_n(\mathfrak{m}_i)$ for all $i=1,2,\ldots,n$, it holds that \begin{equation}\label{betabarra_gmasuno}\overline{\beta}_{g+1}(\nu_n)=\sum_{i=1}^n\nu_n(\mathfrak{m}_i)^2,\end{equation}
as a consequence of the Noether formula (\ref{noether_formula}).
Finally, we define the \emph{normalized volume} of $\nu_n,\text{vol}^N(\nu_n),$ as the volume of the normalized valuation $\nu_n^N,$ that is, $$ \text{vol}^N(\nu_n):=\text{vol}(\nu_n^N)=\frac{\overline{\beta}_0^2(\nu_n)}{\overline{\beta}_{g+1}(\nu_n)}. $$

\subsection{Seshadri-type constants for divisorial valuations.}\label{subsec:mupico}

Let $S$ be a smooth projective surface and $p$ a closed point of $S$. Let $R=\mathcal{O}_{S,p}$ and consider $\nu_n$ a divisorial valuation of the quotient field of $R$ centered at $R.$ As showed in \eqref{Eq_sequencepointblowingups_valuation}, $\nu_n$ defines   a  finite simple sequence of $n$ blowups, $n\geq 1$,
\begin{equation}\label{Eq_sequencepointblowingups_divvaluation}
\pi: \tilde{S}:= S_n\xrightarrow{\pi_n} S_{n-1}\rightarrow \cdots \rightarrow S_1 \xrightarrow{\pi_1} S,
\end{equation}
where $p_1=p$ is the center of the first blowup and $S_i,1\leq i\leq n,$ is a smooth projective surface. The valuation $\nu_n$ is also defined by the sequence \eqref{Eq_sequencepointblowingups_divvaluation}. For simplicity, we say that $\nu_n$ is a \emph{divisorial valuation of} $S$.

From now on we will keep the notation of the preceding section, denoting by $E_i$ the exceptional divisor created by the blowup $\pi_i$, $1\leq i\leq n$. In addition, abusing the notation, we will also denote by $E_i$ its strict transform on any of the surfaces $S_j$ with $j>i$ (in particular, on $\tilde{S}$).

For a big divisor $D$ on $S,$ the \emph{Seshadri-type constant} $\hat{\mu}_D(\nu_n)$ \emph{of $\nu_n$ relative to $D$} is defined as follows:
\begin{equation}\label{eq:defmupico}
\hat\mu_D(\nu_n):=\lim_{m\to \infty}\dfrac{\max\{\nu_n(f) \ | \ f\in H^0(S,\mathcal{O}_{S}(mD))\}}{m}.
\end{equation}
This constant was introduced in \cite{BouKurMacSze} in greater generality. Notice that
$$\hat\mu_D(\nu_n)=\sup_{m\geq m_0}\dfrac{\max\{\nu_n(f) \ | \ f\in H^0(S,\mathcal{O}_{S}(mD))\}}{m}$$
for any choice of $m_0\geq 1$ such that $H^0(X,{\mathcal O}_{S}(mD))\neq 0$ for all $m\geq m_0$ (see \cite[Lemma 2.6]{BouKurMacSze}).
Set $\hat{\mu}^N_D(\nu_n):=\hat{\mu}_D(\nu_n^N),$ where $\nu_n^N$ is the normalization of $\nu_n.$ The Seshadri-type constant (or its normalized version) admits the following bound (see  \cite{BouKurMacSze}):

\begin{equation}\label{Eq_desigualdadmupico}
\hat\mu_D(\nu_n)\geq \sqrt{\dfrac{\text{vol}_{S}(D)}{\text{vol}(\nu_n)}} \text{ or, equivalently, }\hat\mu_D^N(\nu_n)\geq \sqrt{\dfrac{\text{vol}_{S}(D)}{\text{vol}^N(\nu_n)}},
\end{equation}
where
\begin{equation}\label{Eq_igualdadvolumendivisor}
\text{vol}_{S}(D):=\limsup_{m\to\infty}\dfrac{h^0(S,mD)}{m^2/2}
\end{equation}
is named \emph{the volume of }$D$.

\begin{definition}\label{def:minimalvaluation}
A divisorial valuation $\nu_n$ of $S$ is called \emph{minimal with respect to a big divisor} $D$ on $S$ if
\begin{equation*}
\hat\mu_D(\nu_n)= \sqrt{\dfrac{\text{vol}_{S}(D)}{\text{vol}(\nu_n)}} \text{ or, equivalently, if }\hat\mu_D^N(\nu_n)= \sqrt{\dfrac{\text{vol}_{S}(D)}{\text{vol}^N(\nu_n)}}.
\end{equation*}
\end{definition}

\begin{remark}
Equality \eqref{eq:defmupico} shows that the Seshadri-type constant $\hat{\mu}_D(\nu_n)$ satisfies the homogeneous property.  That is, $$\hat{\mu}_{dD}(\nu_n)=d\hat{\mu}_{D}(\nu_n),$$ for any positive integer $d$.  As a consequence, a divisorial valuation $\nu_n$ of $\mathbb{P}^2$ is minimal with respect to any big divisor $D$ on $\mathbb{P}^2$ if and only if it is minimal in the sense of \cite{DumHarKurRoeSze} and \cite{GalMonMoy}.
\end{remark}

\subsection{Newton-Okounkov bodies of exceptional curve valuations}\label{subsec:Nobodies}

Let $Y$ be a smooth (complex) projective surface. A \emph{flag} of $Y$ is a sequence
$$
C_\bullet:=\{Y\supset C\supset \{q\}\},
$$
where $C$ is a smooth irreducible curve on $Y$ and $q$ a closed point of $C$. The point $q$ is called  \emph{center} of $C_\bullet.$

In what follows we will consider flags of the following type:
\begin{equation}\label{eq:flag}
E_\bullet:=\lbrace \tilde{S}=S_r\supset E_r \supset \{p_{r+1}\}   \rbrace,
\end{equation}
where $\tilde{S}=S_r$ is the surface defined by a finite simple sequence of blowups as in \eqref{Eq_sequencepointblowingups_divvaluation}, $E_r$ is the last created exceptional divisor and $p_{r+1}$ is the center of $E_\bullet$. A flag as above defines and is defined by an exceptional curve valuation $\nu$ (see \cite{GalMonMoyNic2}). Its configuration of centers $\mathcal{C}_\nu=\{p_i\}_{i\geq 1}$ satisfies that the set $\{p_i\}_{i=1}^r$ is the configuration of centers of the divisorial valuation $\nu_r$ defined by the exceptional divisor $E_r$, and the remaining centers $p_i$, $i>r,$ are proximate to $p_r$. Notice that, if $p_{r+1}$ is a satellite point, there exists a positive integer $\eta\neq r$ such that $p_{r+1}\in E_\eta\cap E_r.$

Following \cite[Section 3.2]{GalMonMoyNic2}, the exceptional curve valuation associated to $E_\bullet,$ $\nu:=\nu_{E_\bullet},$ is a valuation of rank $2$ such that, for $f\in\mathcal{O}_{S,p_1}, \nu_{E_\bullet}(f)=(v_1(f),v_2(f))$ with $v_1(f):=\nu_r(f)$ and $v_2(f):=\nu_\eta(f)+ \sum_{p_i\to p_r}\text{mult}_{p_i}(f),$ where $\nu_\eta$ is the divisorial valuation defined by the exceptional divisor $E_\eta$ when $p_{r+1}$ is satellite and $\nu_\eta (f) = 0$, otherwise. The value group of $\nu$ is $\mathbb{Z}^2,$ and $\nu(\mathfrak{m}_r)=(1,0)$ and $\nu(\mathfrak{m}_{r+1})=(0,1).$

For $j=r,\eta,$ consider the sequence $(\overline{\beta}_i(\nu_j))_{i=0}^g$ of maximal contact values of $\nu_j$. Denote by $S_\nu$ the semigroup of values of the exceptional curve valuation $\nu$. Write $g^*+1$ the minimal number of generators of $S_\nu$. If $p_r$ and $p_{r+1}$ are satellite points, then it holds that $g^*=g$. Otherwise, $g^*=g+1$. The semigroup $S_\nu$ is minimally generated by the set $\{\overline{\beta}_i(\nu)\}_{i=0}^{g^*},$ where $\overline{\beta}_i(\nu)=(\overline{\beta}_i(\nu_r),\overline{\beta}_i(\nu_\eta)),$ $0\leq i\leq g^*,$  (respectively, $\overline{\beta}_i(\nu)=(\overline{\beta}_i(\nu_r),0),0\leq i\leq g^*-1,$ and $\overline{\beta}_{g^*}(\nu)=(\overline{\beta}_{g+1}(\nu_r),1)$), when $p_{r+1}$ is a satellite (respectively, free) point (see \cite{DelGalNun,GalMonMoyNic2}).

The following result uses the following ordering on the vertices of the dual graph of $\nu_r:$  $\alpha\preccurlyeq\beta$ if there exists a path on the graph $\Gamma_{\nu_r}$ from $1$ to $\beta$ passing through $\alpha$.

\begin{lemma}\cite[\emph{Proposition 2.5 and Lemma 3.9}]{GalMonMoyNic2}\label{lem:336}
Keep the notation introduced before. Assume that $p_{r+1}$ is the satellite point $E_\eta\cap E_r, \eta\neq r$. Then,
\begin{itemize}
\item[(a)] It holds that
\begin{equation*}
\overline{\beta}_{g+1}(\nu_r)=\left|\dfrac{\overline{\beta}_{g^*}(\nu_\eta)}{\overline{\beta}_{g^*}(\nu_r)}-\dfrac{\overline{\beta}_{0}(\nu_\eta)}{\overline{\beta}_{0}(\nu_r)}\right|^{-1}.
\end{equation*}
\item[(b)] Denote by $\varphi_\eta$ an analytically irreducible element in $\mathcal{O}_{S,p},$ whose strict transform on $\tilde{S}$ transversally meets $E_\eta$ at non-singular point.
\begin{itemize}
\item[(1)] If $\eta \preccurlyeq r,$  it holds
$$
\nu_r(\varphi_\eta)=\overline{\beta}_{g+1}(\nu_r)\cdot \dfrac{\overline{\beta}_{g^*}(\nu_\eta)}{\overline{\beta}_{g^*}(\nu_r)} \text{ and } \nu_r(\varphi_\eta) + 1 =\overline{\beta}_{g+1}(\nu_r)\cdot \dfrac{\overline{\beta}_{0}(\nu_\eta)}{\overline{\beta}_{0}(\nu_r)}.
$$
\item[(2)] If $\eta \not\preccurlyeq r,$  it holds
$$
\nu_r(\varphi_\eta)=\overline{\beta}_{g+1}(\nu_r)\cdot \dfrac{\overline{\beta}_{0}(\nu_\eta)}{\overline{\beta}_{0}(\nu_r)} \text{ and } \nu_r(\varphi_\eta) + 1 =\overline{\beta}_{g+1}(\nu_r)\cdot \dfrac{\overline{\beta}_{g^*}(\nu_\eta)}{\overline{\beta}_{g^*}(\nu_r)}.
$$
\end{itemize}
\end{itemize}
\end{lemma}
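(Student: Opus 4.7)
The plan is to prove (b) directly and obtain (a) as an immediate algebraic consequence. The exceptional curve valuation $\nu$ has value group $\mathbb{Z}^2$ and its semigroup $S_\nu$ is minimally generated by the pairs $\{\overline{\beta}_i(\nu)\}_{i=0}^{g^*}$ with $\overline{\beta}_i(\nu) = (\overline{\beta}_i(\nu_r), \overline{\beta}_i(\nu_\eta))$. Since $p_{r+1} = E_\eta \cap E_r$ is satellite, the cone $\mathfrak{C}_\nu$ is bounded by the extreme rays through $\overline{\beta}_0(\nu)$ and $\overline{\beta}_{g^*}(\nu)$, and the dichotomy between these two boundary slopes is precisely what the case split $\eta \preccurlyeq r$ versus $\eta \not\preccurlyeq r$ in (b) encodes.

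For (b), the main computation applies the Noether formula \eqref{noether_formula} to the curvette $\varphi_\eta$ of $E_\eta$:
\[
\nu_r(\varphi_\eta) = \sum_{i=1}^{r} \nu_r(\mathfrak{m}_i) \cdot \mult_{p_i}(\varphi_\eta).
\]
The multiplicities $\mult_{p_i}(\varphi_\eta)$ are determined purely from the dual graph $\Gamma_{\nu_r}$: the strict transform of $\varphi_\eta$ passes with multiplicity one through the free centers of $\mathcal{C}_{\nu_\eta}$ and, by the proximity equalities \eqref{proximity_equalities}, acquires the prescribed multiplicities at the satellite ancestors. The key subtlety is whether $p_{r+1} = E_\eta \cap E_r$ actually lies on the strict transform of $\varphi_\eta$; this occurs exactly when $E_\eta$ dominates $E_r$ in the arrangement, and produces the ``$+1$'' offset between the two equalities of each subcase. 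Collecting these multiplicities and re-expressing the resulting sums via the generator structure of $S(\nu_r)$ and $S(\nu_\eta)$, together with the identity \eqref{betabarra_gmasuno}, yields the four formulas of (b.1) and (b.2).

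For (a), subtracting the two identities within the appropriate subcase of (b) cancels the $\nu_r(\varphi_\eta)$ term and leaves
\[
1 = \overline{\beta}_{g+1}(\nu_r)\left|\dfrac{\overline{\beta}_{g^*}(\nu_\eta)}{\overline{\beta}_{g^*}(\nu_r)}-\dfrac{\overline{\beta}_{0}(\nu_\eta)}{\overline{\beta}_{0}(\nu_r)}\right|,
\]
and solving for $\overline{\beta}_{g+1}(\nu_r)$ yields (a). Geometrically, this says that the angular aperture of $\mathfrak{C}_\nu$ at its vertex, measured as the absolute difference of the slopes of its two boundary rays, equals the reciprocal of the inverse volume of $\nu_r$.

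The main obstacle will be the bookkeeping of $\mult_{p_i}(\varphi_\eta)$ in the case split. The qualitative difference between $\eta \preccurlyeq r$ and $\eta \not\preccurlyeq r$ is whether $E_\eta$ lies on the geodesic from $p_1$ toward $E_r$ in $\Gamma_{\nu_r}$ or on a side branch, and this determines whether the full chain of centers up to $p_r$ contributes to the Noether sum or only a proper sub-chain. One must verify that, in each case, the swap between the $\overline{\beta}_0$ and $\overline{\beta}_{g^*}$ ratios occurs exactly as claimed and that the extra unit coming from the intersection at $p_{r+1}$ lands on the correct side of the equality.
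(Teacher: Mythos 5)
First, note that the paper does not prove this lemma at all: it is quoted verbatim from \cite{GalMonMoyNic2} (Proposition 2.5 and Lemma 3.9 there), so there is no in-paper argument to compare yours against. Judged on its own terms, your proposal is a plan rather than a proof, and the plan has a genuine gap at its central step. The reduction of (a) to (b) by subtracting the two identities within a subcase is correct, and opening with the Noether formula $\nu_r(\varphi_\eta)=\sum_{i=1}^{r}\nu_r(\mathfrak{m}_i)\,\mult_{p_i}(\varphi_\eta)$ is a reasonable first move. But the entire content of (b) is the identification of this sum with $\overline{\beta}_{g+1}(\nu_r)\,\overline{\beta}_{g^*}(\nu_\eta)/\overline{\beta}_{g^*}(\nu_r)$ (respectively with the $\overline{\beta}_0$-ratio), and you dispose of it with ``collecting these multiplicities and re-expressing the resulting sums via the generator structure \dots\ yields the four formulas.'' That sentence restates the theorem rather than proving it: nothing in the proposal explains why ratios of maximal contact values of $\nu_\eta$ and $\nu_r$ emerge from the multiplicity bookkeeping, nor why the dichotomy $\eta\preccurlyeq r$ versus $\eta\not\preccurlyeq r$ swaps the roles of $\overline{\beta}_0$ and $\overline{\beta}_{g^*}$.

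Moreover, the one concrete mechanism you do propose for the ``$+1$'' is incorrect. You attribute it to ``whether $p_{r+1}$ actually lies on the strict transform of $\varphi_\eta$''; but by definition $\varphi_\eta$ is chosen so that its strict transform meets $E_\eta$ transversally at a general, non-singular point of the exceptional locus, so it never passes through the satellite point $p_{r+1}=E_\eta\cap E_r$, in either subcase. The unit discrepancy is instead a unimodularity phenomenon: it encodes $E_\eta\cdot E_r=1$ on $\tilde{S}$, equivalently that $\overline{\beta}_0(\nu)$ and $\overline{\beta}_{g^*}(\nu)$ span the boundary rays of $\mathfrak{C}_\nu$ with determinant $\pm1$, and a correct proof must carry out that determinant (or an equivalent intersection-theoretic) computation explicitly. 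A sanity check with $r=2$, $\eta=1$, $p_3=E_1\cap E_2$ (so $\overline{\beta}_{g+1}(\nu_2)=2$ and $\nu_2(\varphi_1)=1$) confirms all four formulas but also shows the ``$+1$'' appearing even though $\varphi_1$ avoids $p_3$. So the skeleton is salvageable, but the decisive computations are missing and the only mechanism offered for the key offset is wrong.
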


\begin{definition}
Let $D$ be a big divisor on a smooth projective surface $S$ and $\nu$ an exceptional curve valuation of $S$. We say that $\nu$ is \emph{minimal with respect to} $D$ when its first component $\nu_r$ is minimal with respect to $D.$
\end{definition}

Now we introduce the notion of Newton-Okounkov body in our context and some properties that will be of interest in this paper.

\begin{definition}\label{def:NObody}
Let $\nu$ be an exceptional curve valuation of $S$ and $D$ a big divisor on $S$. The \emph{Newton-Okounkov body of $D$ with respect to $\nu$} is defined as
$$
\Delta_{\nu}(D):=\overline{\bigcup_{m\geq 1}\bigg\{\dfrac{\nu (f)}{m}\ \Big| \ f\in H^0(S,mD)\setminus \{0\}\bigg\}},
$$
where the upper line means closed convex hull in $\mathbb{R}^2$.
\end{definition}
Sometimes $\Delta_{\nu}(D)$ is denoted as $\Delta_{\nu_{E_\bullet}}(D^*)$, where $D^*$ is the pull-back of $D$ on the surface $\tilde{S}$. In addition, $\Delta_{\nu}(D)$ is  a  polygon (see \cite{KurLozMac}) and
$$
\text{vol}(D)=\text{vol}_{\tilde{S}}(D^*)=2\,\text{vol}_{\mathbb{R}^2}(\Delta_{\nu}(D)),
$$
where $\text{vol}_{\mathbb{R}^2}$ means Euclidean area (see \cite{LazMus}).

\medskip

The following proposition is a reformulation of Theorem 6.4 in \cite{LazMus} within the framework of this paper.

\begin{proposition}\label{pro:LM-alphabeta}
Let $D$ be a big and nef divisor on a smooth projective surface $S$. Consider an exceptional curve valuation $\nu$ of $S$ and set $E_\bullet:=\lbrace \tilde{S}=S_r\supset E_r \supset \{p_{r+1}\}   \rbrace$ the flag that $\nu$ defines. Let $D_t=P_t+N_t$ be the Zariski decomposition of the $\mathbb{R}$-divisor $D_t:=D^*-t E_r$, $t\in\mathbb{R}$, and let $\mu =\mu(D^*,E_r):= \sup\{t>0 \ | \ D^* - t E_r \mbox{ is big}\}$. Then, the Newton--Okounkov body of $D$ with respect to $\nu$ is given by
\[
\Delta_{\nu}(D) = \big\{(t,y) \in \mathbb{R}^2 \ | \ 0 \leq t \leq \mu, \, \alpha(t) \leq y \leq \beta(t) \big\} \,,
\]
where
\[
\alpha(t)=\ord_{p_{r+1}}(N_t|_{E_r}), \quad \beta(t) = \alpha(t)+(E_r \cdot P_t).
\]
\end{proposition}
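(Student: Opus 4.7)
The strategy is to recognize the statement as the specialization of \cite[Theorem~6.4]{LazMus} to the big and nef divisor $D^{\ast}$ on $\tilde{S}$ and the flag $E_\bullet=\{\tilde{S}\supset E_r\supset\{p_{r+1}\}\}$, once we identify the exceptional curve valuation $\nu$ with the flag valuation used by Lazarsfeld and Musta\c{t}\u{a}.

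First I would transfer the problem to $\tilde{S}$. The birational morphism $\pi:\tilde{S}\to S$ induces, via pullback, an isomorphism $H^{0}(S,mD)\xrightarrow{\sim} H^{0}(\tilde{S},mD^{\ast})$ for every $m\geq 1$, and the valuation $\nu$ is by construction defined on rational functions on $\tilde{S}$ through the flag $E_\bullet$. Hence
\[
\Delta_{\nu}(D)=\Delta_{\nu_{E_\bullet}}(D^{\ast}).
\]
Moreover, $D^{\ast}=\pi^{\ast}D$ is nef (pullback of nef is nef) and $(D^{\ast})^{2}=D^{2}>0$ because $D$ is big and nef, so $D^{\ast}$ is itself big and nef on $\tilde{S}$, placing us squarely in the setting of \cite[Theorem~6.4]{LazMus}.

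Next I would check that $\nu_{E_\bullet}$, as recalled in Subsection~\ref{subsec:Nobodies}, coincides on sections with the Lazarsfeld--Musta\c{t}\u{a} valuation of the flag $\{\tilde{S}\supset E_r\supset\{p_{r+1}\}\}$. For a local generator $s$ of a section of $\mathcal{O}_{\tilde{S}}(mD^{\ast})$ near $p_{r+1}$, the first component of $\nu_{E_\bullet}(s)$ equals $\nu_{r}(s)=\ord_{E_{r}}(s)$, matching the first Lazarsfeld--Musta\c{t}\u{a} coordinate. For the second component, after dividing $s$ by a local equation of $E_{r}$ raised to the power $\ord_{E_{r}}(s)$ and restricting to $E_{r}$, the order of vanishing at $p_{r+1}$ agrees with $v_{2}(s)=\nu_{\eta}(s)+\sum_{p_i\to p_r}\mathrm{mult}_{p_i}(s)$; this is a local computation in suitable coordinates at $p_{r+1}$ (handling the free and satellite cases for $p_{r+1}$ separately, using the Noether formula~\eqref{noether_formula} in the latter case).

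Granting this identification, \cite[Theorem~6.4]{LazMus} applies verbatim to $D^{\ast}$ on $\tilde{S}$ with the flag $E_\bullet$ and yields
\[
\Delta_{\nu_{E_\bullet}}(D^{\ast})=\big\{(t,y)\in\mathbb{R}^{2}\mid 0\leq t\leq \mu,\ \alpha(t)\leq y\leq \beta(t)\big\},
\]
where $\mu=\sup\{t>0\mid D^{\ast}-tE_{r}\text{ is big}\}$, $\alpha(t)=\ord_{p_{r+1}}(N_t|_{E_r})$ and $\beta(t)=\alpha(t)+(E_r\cdot P_t)$, with $D^{\ast}-tE_r=P_t+N_t$ the Zariski decomposition. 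The main obstacle is the identification of the two valuations in the previous paragraph: the exceptional-curve-valuation formula for the second coordinate is combinatorial (involving the auxiliary divisorial valuation $\nu_{\eta}$ and proximity sums), while the Lazarsfeld--Musta\c{t}\u{a} coordinate is defined geometrically as an order of vanishing on $E_r$, so one must verify their agreement before the reformulation becomes transparent.
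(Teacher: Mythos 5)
Your proposal is correct and matches the paper's approach: the paper presents this proposition precisely as a reformulation of \cite[Theorem 6.4]{LazMus} applied to the big and nef divisor $D^{\ast}$ on $\tilde{S}$ with the flag $E_\bullet$, offering no further proof. The identification of the exceptional curve valuation $\nu_{E_\bullet}$ with the Lazarsfeld--Musta\c{t}\u{a} flag valuation, which you rightly flag as the only point needing verification, is exactly the content delegated by the paper to \cite[Section 3.2]{GalMonMoyNic2}.
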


\medskip

As a consequence we have a geometrical interpretation of the Seshadri-type constant $\hat{\mu}_D(\nu_r)$. Although this interpretation is well-known, we include it for the reader's convenience.

\begin{proposition}\label{prop:mupico_geometry}
Let $D$ be a big divisor on a smooth projective surface $S$ and $\nu_r$ a divisorial valuation of $S$ defining a surface $\tilde{S}.$ Denote by $E_r$ the exceptional divisor which defines $\nu_r$ and by $D^*$ the total transform of the divisor $D$ on $\tilde{S}.$ Then,
 \begin{equation}\label{Eq_mupico_geo_version}
\hat{\mu}_D(\nu_r)=\muu:=\sup\Big\{t> 0 \ | \ D^*-tE_r \text{ is big}\Big\}.
\end{equation}
\end{proposition}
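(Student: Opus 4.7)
The plan is to prove both inequalities between $\hat\mu_D(\nu_r)$ and $\mu(D^*,E_r)$ after a preliminary translation. Because $\pi:\tilde S\to S$ is a composition of blowups of smooth points, $\pi_*\mathcal O_{\tilde S}=\mathcal O_S$, and the projection formula furnishes canonical isomorphisms $H^0(S,\mathcal O_S(mD))\cong H^0(\tilde S,\mathcal O_{\tilde S}(mD^*))$ for every $m\geq 0$. Under this identification one has $\nu_r(f)=\operatorname{ord}_{E_r}(\pi^*f)$, so the subspace $\{f\mid \nu_r(f)\geq k\}$ corresponds to $H^0(\tilde S, mD^*-kE_r)$. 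Setting $\tau(m):=\max\{\nu_r(f)\mid f\in H^0(S,\mathcal O_S(mD))\setminus\{0\}\}$, this makes $\tau(m)$ the largest integer $k\geq 0$ with $h^0(\tilde S,mD^*-kE_r)>0$, and $\hat\mu_D(\nu_r)=\sup_{m\geq m_0}\tau(m)/m$ by the definition and the remark following \eqref{eq:defmupico}.

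For $\mu(D^*,E_r)\leq\hat\mu_D(\nu_r)$, pick $s$ with $0<s<\mu(D^*,E_r)$. The set of real $t$ for which $D^*-tE_r$ is big is open (bigness is an open condition on $\Num_{\mathbb R}(\tilde S)$), so there exists a rational $s'=p/q$ with $s<s'<\mu(D^*,E_r)$ and $qD^*-pE_r$ big. Hence for some $k\geq 1$ one has $h^0(\tilde S, k(qD^*-pE_r))>0$, i.e.\ $\tau(kq)\geq kp$, which gives $\hat\mu_D(\nu_r)\geq p/q>s$. Letting $s\to\mu(D^*,E_r)^-$ yields the desired inequality.

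For the reverse inequality, fix $0<s<\hat\mu_D(\nu_r)$ and $\epsilon\in(0,1)$ small enough that $s/(1-\epsilon)<\hat\mu_D(\nu_r)$; choose $m$ with $\tau(m)/m>s/(1-\epsilon)$. Then
$$D^*-\tfrac{s}{1-\epsilon}E_r=\tfrac{1}{m}\bigl(mD^*-\tau(m)E_r\bigr)+\Bigl(\tfrac{\tau(m)}{m}-\tfrac{s}{1-\epsilon}\Bigr)E_r$$
is an effective $\mathbb Q$-divisor. From the identity $D^*-sE_r=\epsilon D^*+(1-\epsilon)\bigl(D^*-\tfrac{s}{1-\epsilon}E_r\bigr)$ and the bigness of $D^*=\pi^*D$ (pullback of big by a birational morphism), the divisor $D^*-sE_r$ is the sum of the big divisor $\epsilon D^*$ and an effective $\mathbb Q$-divisor, hence big. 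Therefore $s\leq\mu(D^*,E_r)$, and letting $s\to\hat\mu_D(\nu_r)^-$ completes the proof.

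The main subtlety lies in the last step: directly, $\tau(m)/m>s$ only yields $\mathbb Q$-effectivity of $D^*-sE_r$, not bigness; the convex-combination trick bridges this gap by absorbing the missing positivity into an $\epsilon$-fraction of $D^*$, using in an essential way that $D$ itself is big. The other ingredients---projection formula, openness of the big cone, and pseudoeffectivity of effective divisors---are standard.
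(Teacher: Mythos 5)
Your proof is correct, and for one of the two inequalities it follows a genuinely different route than the paper. For $\mu(D^*,E_r)\leq\hat\mu_D(\nu_r)$ your argument (pass to a rational $p/q$ below $\mu(D^*,E_r)$, use that $qD^*-pE_r$ is big to produce a section of some $kqD$ with $\nu_r$-value at least $kp$) is essentially the paper's argument, which phrases the same idea as a proof by contradiction. For the other inequality, however, the paper simply invokes the Lazarsfeld--Musta\c{t}\u{a} description of the Newton--Okounkov body (its Proposition on $\alpha(t),\beta(t)$, stated there for big and nef divisors): since the points $\nu(f)/m$ lie in $\Delta_\nu(D)$, their first coordinates $\nu_r(f)/m$ are bounded by $\mu(D^*,E_r)$, giving $\hat\mu_D(\nu_r)\leq\mu(D^*,E_r)$ in one line. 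You instead prove this direction from scratch: a section with $\nu_r(f)/m>s/(1-\epsilon)$ makes $D^*-\tfrac{s}{1-\epsilon}E_r$ $\mathbb{Q}$-effective, and the convex combination $D^*-sE_r=\epsilon D^*+(1-\epsilon)\bigl(D^*-\tfrac{s}{1-\epsilon}E_r\bigr)$ is big as the sum of a big and a pseudoeffective class, so $s\leq\mu(D^*,E_r)$. What your route buys is self-containedness and slightly weaker hypotheses: you only use that $D$ (hence $D^*$) is big, openness of the big cone, and ``big plus effective is big,'' avoiding any appeal to the Newton--Okounkov machinery (and in particular to a statement the paper records under a big-and-nef hypothesis, while the proposition itself only assumes $D$ big); what the paper's route buys is brevity, since the bound is immediate once the Okounkov-body description is available. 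The only points you leave implicit are minor: the identification of $\nu_r(f)$ with the vanishing order along $E_r$ uses the paper's normalization $c=1$, and passing from $\tau(kq)/(kq)\geq p/q$ to $\hat\mu_D(\nu_r)\geq p/q$ uses superadditivity of $\tau$ (take powers $f^j$) so that the bound persists for all large multiples; both are routine.
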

\begin{proof}
 Equality \eqref{eq:defmupico}, Definition \ref{def:NObody} and Proposition \ref{pro:LM-alphabeta} show the inequality $\hat{\mu}_D(\nu_r)\leq\muu$. For the opposite inequality, reasoning by contradiction, we assume that $\hat{\mu}_D(\nu_r)$ $<\muu.$ Then, there exist positive integers $a,b$ and $m$ such that
$$
\hat{\mu}_D(\nu_r)<\frac{a}{b}\leq \muu \text{ and } H^0(\tilde{S},{\mathcal O}_{\tilde{S}}(mbD^*-maE_r))\neq 0.
$$
This implies the existence of a curve $C\in |mbD|$ such that $\nu_r(\varphi_C)\geq ma$ and thus,
$$
\hat{\mu}_D(\nu_r)<\frac{a}{b}\leq\frac{\nu_r(\varphi_C)}{b},
$$
which is a contradiction.
\end{proof}

The following result, taken from \cite[Lemma 3.7]{GalMonMoyNic2}, shows that the Newton--Okounkov body of a big divisor $D$ with respect to an exceptional curve valuation $\nu$ is always included in a distinguished triangle.

\begin{proposition}\label{prop:335}
Keep the above notation  and let $\nu$ be an exceptional curve valuation defined by a flag $E_\bullet$ as in \eqref{eq:flag}. The Newton--Okounkov body $\Delta_\nu(D)$ of a big divisor $D$ on $S$ with respect to $\nu$ is contained in the triangle $\mathfrak{T}_D(\nu)$ whose vertices are
$$
(0,0),\ \ \left(\hat{\mu}_D(\nu_r),\dfrac{\hat{\mu}_D(\nu_r)\overline{\beta}_0(\nu_\eta)}{\overline{\beta}_0(\nu_r)}\right) \ \ \text{ and } \ \ \left(\hat{\mu}_D(\nu_r),\dfrac{\hat{\mu}_D(\nu_r) \overline{\beta}_{g^*}(\nu_\eta)}{\overline{\beta}_{g^*}(\nu_r)}\right)
$$
if $p_{r+1}$ is the satellite point $E_\eta \cap E_r$, $\eta \neq r$, and
$$
(0,0),\ \ \left(\hat{\mu}_D(\nu_r) ,0\right) \ \ \text{ and } \ \ \left(\hat{\mu}_D(\nu_r),\dfrac{\hat{\mu}_D(\nu_r)}{\overline{\beta}_{g+1}(\nu_r)}\right),
$$
otherwise ($p_{r+1}$ is free).
\end{proposition}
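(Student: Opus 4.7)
The plan is to combine three ingredients already established in the excerpt: (i) the Lazarsfeld--Musta\c{t}\u{a} horizontal-slab description of the Newton--Okounkov body (Proposition \ref{pro:LM-alphabeta}), (ii) the identification $\mu(D^*,E_r)=\hat\mu_D(\nu_r)$ (Proposition \ref{prop:mupico_geometry}), and (iii) the numerical data on the dual graph encoded in Lemma \ref{lem:336}. The first two ingredients immediately pin down the vertical strip $0\le t\le \hat\mu_D(\nu_r)$ containing $\Delta_\nu(D)$, so the real content of the statement is a control on the slopes of the two non-vertical sides of $\mathfrak{T}_D(\nu)$ emanating from the origin.

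First I would rewrite $\Delta_\nu(D)$ as the region between the graphs of the convex function $\alpha(t)=\ord_{p_{r+1}}(N_t|_{E_r})$ and the concave function $\beta(t)=\alpha(t)+(E_r\cdot P_t)$ on $[0,\hat\mu_D(\nu_r)]$, both of which vanish at $t=0$ (since $D_0=D^*$ is nef, $N_0=0$ and $E_r\cdot D^*=0$). By concavity of $\beta$ and convexity of $\alpha$ it suffices to bound the one-sided derivatives of these functions at the origin by the slopes of the two candidate sides of $\mathfrak{T}_D(\nu)$. Equivalently, it is enough to control the ratio $v_2(f)/\nu_r(f)$ for every non-zero section $f\in H^0(S,mD)$, because the convex hull of such points is exactly $\Delta_\nu(D)$.

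Second, I would carry out this ratio analysis using the formula $v_2(f)=\nu_\eta(f)+\sum_{p_i\to p_r}\mathrm{mult}_{p_i}(f)$ in the satellite case and $v_2(f)=\sum_{p_i\to p_r}\mathrm{mult}_{p_i}(f)$ in the free case. In the satellite case, splitting according to whether $\eta\preccurlyeq r$ and invoking Lemma \ref{lem:336}(b) (together with the proximity and Noether formulas of Subsection \ref{subsec:invariantsvaluation}) identifies the extremal values of $v_2(f)/\nu_r(f)$ with the two ratios
\[
\frac{\overline\beta_0(\nu_\eta)}{\overline\beta_0(\nu_r)}\qquad\text{and}\qquad \frac{\overline\beta_{g^*}(\nu_\eta)}{\overline\beta_{g^*}(\nu_r)},
\]
each attained (in the limit) by suitable analytically irreducible germs transversal to $E_\eta$ or to $E_r$; these are precisely the slopes of the two sides of $\mathfrak{T}_D(\nu)$ through the origin. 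In the free case, $v_2(f)\geq 0$ gives the slope $0$, while the bound $v_2(f)/\nu_r(f)\leq 1/\overline\beta_{g+1}(\nu_r)$ (a consequence of \eqref{betabarra_gmasuno} and the proximity equalities) gives the slope $1/\overline\beta_{g+1}(\nu_r)$. Combined with the vertical cut at $t=\hat\mu_D(\nu_r)$, this yields the three stated vertices.

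The main obstacle I foresee is the satellite case: one has to match the abstract extremal slopes produced by arbitrary sections $f$ with the very specific ratios in Lemma \ref{lem:336}(b), and decide which of the two ratios is the upper and which is the lower bound. The way to handle it is to split according to $\eta\preccurlyeq r$ or $\eta\not\preccurlyeq r$ and use that $\nu_r(\varphi_\eta)$ and $\nu_r(\varphi_\eta)+1$ are \emph{both} realized as $v_2$-values of germs meeting $E_\eta$ (respectively $E_r$) transversally at $p_{r+1}$; Lemma \ref{lem:336}(b) then produces simultaneously both slopes, in the correct order, without having to worry about the combinatorial sign. Once this is settled the containment $\Delta_\nu(D)\subseteq\mathfrak{T}_D(\nu)$ follows from convexity of $\Delta_\nu(D)$.
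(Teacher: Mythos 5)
The paper does not actually prove this proposition: it is imported verbatim from \cite[Lemma 3.7]{GalMonMoyNic2}, where the argument is that every value $\nu(f)$ lies in the cone $\mathfrak{C}_\nu$ generated by the value semigroup $S_\nu$, so $\Delta_\nu(D)\subseteq \mathfrak{C}_\nu\cap\{0\le t\le\hat\mu_D(\nu_r)\}$, and one then identifies the extremal rays of $\mathfrak{C}_\nu$ from the explicit generators $\{\overline{\beta}_i(\nu)\}_{i=0}^{g^*}$. Your overall strategy — cut out the strip via Propositions \ref{pro:LM-alphabeta} and \ref{prop:mupico_geometry}, then bound the slopes $v_2(f)/v_1(f)$ — is the same idea, and your free case is fine in substance: from the generators $(\overline{\beta}_i(\nu_r),0)$, $i\le g$, and $(\overline{\beta}_{g+1}(\nu_r),1)$ one reads off $0\le v_2\le v_1/\overline{\beta}_{g+1}(\nu_r)$ directly (this is a cleaner route than the appeal to \eqref{betabarra_gmasuno} and proximity, which as stated only yields the weaker bound $v_2(f)\le \mathrm{mult}_{p_r}(f)$, hence $v_2/v_1\le 1/\nu_r(\mathfrak{m}_r)$).

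The genuine gap is in the satellite case. What you must prove is a \emph{two-sided bound} on $v_2(f)/v_1(f)$ valid for \emph{every} section $f$; what your argument actually delivers is that the two slopes $\overline{\beta}_0(\nu_\eta)/\overline{\beta}_0(\nu_r)$ and $\overline{\beta}_{g^*}(\nu_\eta)/\overline{\beta}_{g^*}(\nu_r)$ are \emph{attained} by the special germs $\varphi_\eta$ and $\varphi_r$ via Lemma \ref{lem:336}(b). Attainment shows the triangle cannot be shrunk; it does not show containment. Writing $\nu(f)=\sum_i n_i\overline{\beta}_i(\nu)$ with $n_i\ge 0$, the ratio $v_2(f)/v_1(f)$ is a weighted mediant of all the generator slopes $\overline{\beta}_i(\nu_\eta)/\overline{\beta}_i(\nu_r)$, $0\le i\le g^*$, so the missing ingredient is precisely that this sequence of slopes is monotone in $i$ (equivalently, that the extremal rays of $\mathfrak{C}_\nu$ are spanned by $\overline{\beta}_0(\nu)$ and $\overline{\beta}_{g^*}(\nu)$ rather than by some intermediate generator). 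Lemma \ref{lem:336} as quoted says nothing about the intermediate indices $0<i<g^*$, and neither proximity nor the Noether formula gives this for free; it is a nontrivial combinatorial property of maximal contact values (established in \cite{DelGalNun,GalMonMoyNic2}) that must be invoked or proved for the containment to go through.
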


The following result describes an interesting property concerning the shape of Newton-Okounkov bodies with respect to exceptional curve valuations. It can be deduced from results by  K\"uronya, Lozovanu and Maclean \cite{KurLozMac}, and Roé and Szemberg \cite{RoeSze}. See \cite[Theorem 2.6]{MoyFerNicRoe} for this formulation.

\begin{proposition}\label{pro:KLM}
Let $D$ be a big divisor on a smooth projective surface $S$ and consider the flag $E_\bullet=\{\tilde{S}\supset E_r \supset \{p_{r+1}\} \}$. Consider the $\mathbb{R}$-divisors $D_t=D-tE_r, 0\leq t\leq \hat{\mu}_{D}(\nu_r),$ with Zariski decompositions $D_t=P_t+N_t$, and let $\alpha(t)$, $\beta(t)$ be the maps given in Proposition \ref{pro:LM-alphabeta}. Then, either the map $\alpha(t)$ or the map $\beta(t)$ has a point of non-differentiability at $t=t_0$ if and only if the negative part of $D_t$ acquires one or more irreducible components at $t_0$, i.e., for every $\varepsilon>0$ the supports of $N_{t_0}$ and $N_{t_0+\varepsilon}$ differ.
\end{proposition}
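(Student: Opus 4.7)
The plan is to analyze how the Zariski decomposition of $D_t = D^* - tE_r$ varies with the parameter $t$ and to read off the (non-)differentiability of the boundary maps $\alpha(t)$ and $\beta(t)$ from these variations.

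I would begin by recalling the standard linear-algebraic description of the Zariski decomposition: once the support $\{C_1,\dots,C_k\}$ of the negative part is fixed, the coefficients in $N_t=\sum_i a_i(t)\,C_i$ are determined by the linear system $(D_t - \sum_i a_i C_i)\cdot C_j = 0$, whose matrix $(C_i\cdot C_j)$ is negative definite (hence invertible). Since $D_t$ depends affinely on $t$, the coefficients $a_i(t)$ are affine-linear in $t$ on any maximal open subinterval of $[0,\hat{\mu}_D(\nu_r)]$ on which $\mathrm{Neg}(D_t)$ is constant. Consequently the divisor $N_t|_{E_r}$ is affine-linear in $t$, and so is $\alpha(t) = \ord_{p_{r+1}}(N_t|_{E_r})$; likewise $P_t\cdot E_r = D_t\cdot E_r - N_t\cdot E_r$ is affine-linear. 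This disposes of the \emph{only if} direction: if the support of $N_t$ is locally constant around $t_0$, both $\alpha$ and $\beta=\alpha+P_t\cdot E_r$ are affine-linear, hence differentiable, at $t_0$.

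For the \emph{if} direction, assume the support of $N_t$ changes at $t_0$, say a new component enters as $t$ crosses $t_0$ from the left. The coefficient functions $a_i(t)$ satisfy different linear systems on the two sides of $t_0$, so one must track how the resulting one-sided slopes of $N_t|_{E_r}$ at $p_{r+1}$ and of $P_t\cdot E_r$ change. The cleanest route is to invoke the chamber-wise analysis of Newton--Okounkov bodies developed by K\"uronya--Lozovanu--Maclean \cite{KurLozMac} and refined by Ro\'e--Szemberg \cite{RoeSze}: those works establish that the piecewise-linear structure of the polygon $\Delta_\nu(D)$ (and thus of its boundary functions $\alpha$, $\beta$) is in one-to-one correspondence with the sequence of Zariski chambers traversed by the segment $\{[D^*-tE_r]\}_{t\in[0,\hat{\mu}_D(\nu_r)]}$, and that each chamber-crossing produces a non-trivial slope change in at least one of $\alpha$ or $\beta$. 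With this correspondence in hand, the proposition is a matter of matching conventions.

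The principal obstacle lies not in the easy direction but in the refined chamber-wise statement just invoked: specifically, one must rule out ``invisible'' chamber-crossings in which the slope jumps of the $a_i(t)$ cancel simultaneously in the two independent linear functionals $\sum_i a_i(t)\,(C_i\cdot E_r)$ and $\sum_i a_i(t)\,\ord_{p_{r+1}}(C_i|_{E_r})$. This is the genuine content of \cite{KurLozMac} and \cite{RoeSze}, so rather than reproving it I would simply appeal to their statements.
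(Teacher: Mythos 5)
The paper gives no proof of this proposition: it is recalled verbatim from the literature, citing K\"uronya--Lozovanu--Maclean, Ro\'e--Szemberg, and \cite[Theorem 2.6]{MoyFerNicRoe} for this formulation. Your sketch is consistent with that — the piecewise-affine-linearity argument for the \emph{only if} direction is the standard one, and you correctly locate the genuinely hard content (excluding ``invisible'' chamber crossings) in exactly the references the paper itself defers to.
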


\begin{remark}\label{remark:thmKLM}
As explained in \cite[page 7]{MoyFerNicRoe} (see also \cite[Remark 1.5]{KurLoz1}), we emphasize the following fact: the negative part of $D_t$ acquires one or more irreducible components at $t_0$ if and only if $[D_{t}],$ where $ t_0-\epsilon < t < t_0+\epsilon$ and $\epsilon >0,$ crosses from one Zariski chamber to another.
\end{remark}

\section{The valuative Nagata conjecture for plane divisorial valuations}\label{sec:valNagConj}

The valuative Nagata conjecture \cite{DumHarKurRoeSze,GalMonMoy} states that, if $\nu_r$ is a very general real plane valuation of $\mathbb{P}^2$ such that $[\text{vol}^N(\nu_r)]^{-1}\geq 9,$ then $\nu_r$ is minimal. In \cite{GalMonMoy} it was proved that this conjecture implies the classical Nagata conjecture and it is implied by the Greuel-Lossen-Shustin conjecture \cite[Conjecture 4.7.11]{GreuelBook}. This section considers a smooth (complex) projective surface $S$, an ample divisor $D$ on $S$ and a divisorial valuation $\nu_r$ of $S$. As a main result, we prove the existence of several equivalent statements to the minimality of $\nu_r$ with respect to $D,$ which provides several equivalent statements to the valuative Nagata conjecture in terms of interesting algebraic and geometric tools.

The Seshadri constant is an important invariant  very related to the classical Nagata conjecture. In the next subsection we introduce a notion of Seshadri constant for a divisorial valuation of a smooth projective surface. We will see that this constant naturally extends the concept of one point Seshadri constant.

\subsection{Seshadri constant for a divisorial valuation of a smooth projective surface}\label{subsec:seshadri_constant}
Let $S$ be a smooth projective surface over the field of complex numbers and keep the notation introduced in the previous section.

\begin{definition}\label{def:sdnu}
Let $D$ be a nef divisor on $S$ and consider a divisorial valuation $\nu_r$ defined by a sequence of $r$ blowups, $\pi,$ as in \eqref{Eq_sequencepointblowingups_divvaluation}. Then, the \emph{Seshadri constant of $D$ with respect to the valuation $\nu_r$} is defined as the value
$$
\varepsilon(D,\nu_r)=\varepsilon(S,D,\nu_r):=\sup\Bigg\{t\geq 0 \ \Big| \ D^*-t\sum_{i=1}^r\nu_r(\mathfrak{m}_i)E_i^*\text{ is nef on }\tilde{S}\Bigg\},
$$
where $\tilde{S}$ is the surface defined by $\nu_r$ and $D^*$ (respectively, $E_i^*$) the total transform on $\tilde{S}$ of $D$ (respectively, the exceptional divisor $E_i$ defined by $\pi$).
\end{definition}

\begin{remark}
Notice that, if $r>1$, the $\mathbb{R}$-divisor $D^*-t\sum_{i=1}^r\nu_r(\mathfrak{m}_i)E_i^*$ is nef but not ample for the values $t<\varepsilon(D,\nu_r)$ since its intersection product with the strict transform of any exceptional divisor $E_i,1\leq i<r,$ vanishes. As a result, under these conditions, the class in $\text{Num}_\mathbb{R}(\tilde{S})$ of $D^*-t\sum_{i=1}^r\nu_r(\mathfrak{m}_i)E_i^*$ belongs to the boundary of Nef$(\tilde{S})$.
\end{remark}

\begin{remark}
The Seshadri constant $\varepsilon(D,\nu_r)$ satisfies the homogeneous property:
$$
\varepsilon(m\,D,\nu_r)=m\,\varepsilon(D,\nu_r), \text{ for } m\in\mathbb{Z} \text{ and } m\geq 0.
$$
\end{remark}

\begin{remark}\label{rem:Bounds_Sdnu}
By \eqref{betabarra_gmasuno}, the following upper bound holds:
\begin{equation}\label{eq:upperbound_seshadri}
\varepsilon(D,\nu_r)\leq \sqrt{\dfrac{D^2}{\overline{\beta}_{g+1}(\nu_r)}}.
\end{equation}
In addition, when $r=1$,  $\varepsilon(D,\nu_r)=\varepsilon(D,p),$  $\varepsilon(D,p)$ being the Seshadri constant of $D$ at the point $p\in S$ where the divisorial valuation $\nu_r$ is centered, cf. \cite[Definition 5.1.1]{Laz1}.
\end{remark}

\begin{definition}\label{def:seshadri_maximal}
The Seshadri constant $\varepsilon(D,\nu_r)$ is said to be \emph{maximal} whenever Inequality \eqref{eq:upperbound_seshadri} is an equality. Otherwise, $\varepsilon(D,\nu_r)$ is named \emph{submaximal}.
\end{definition}

The following result provides an equivalent definition for $\varepsilon(D,\nu_r).$ The idea of the proof comes from \cite[Exercise I.4.9]{Harb2}.

\begin{lemma}\label{lemm:infimo_seshadri}
Let $D$ be a nef divisor on $S$ and $\nu_r$ a divisorial valuation of $S$. Then
$$
\begin{array}{rcl}
\varepsilon(D,\nu_r)&=&s_1:=\inf\Bigg\{\dfrac{D\cdot C}{\nu_r(\varphi_C)}\ \Big| \ C\text{ is a curve on }S \text{ such that }\nu_r(\varphi_{C})>0\Bigg\}\\[7mm]
&=&s_2:=\inf\Bigg\{\dfrac{D\cdot C}{\nu_r(\varphi_C)}\ \Big| \ C\text{ is an integral curve on }S \text{ such that }\nu_r(\varphi_{C})>0\Bigg\}.
\end{array}
$$
\end{lemma}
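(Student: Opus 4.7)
The strategy is to reduce the nef condition defining $\varepsilon(D,\nu_r)$ to a numerical criterion on $\tilde{S}$ via Kleiman's theorem, and then translate each resulting inequality back to data on $S$ using the proximity \eqref{proximity_equalities} and Noether \eqref{noether_formula} formulas. This is the valuative analogue of the classical one-point Seshadri computation alluded to in the statement.

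I first handle $s_1=s_2$. The inequality $s_1\leq s_2$ is immediate. For $s_2\leq s_1$, given a curve $C$ on $S$ with $\nu_r(\varphi_C)>0$, decompose $C=\sum_j a_j C_j$ into its integral components and let $I=\{j\mid \nu_r(\varphi_{C_j})>0\}$. Since $D$ is nef, the components with $j\notin I$ do not pass through $p_1$, contributing nonnegatively to $D\cdot C$ but nothing to $\nu_r(\varphi_C)$; hence
\[
\frac{D\cdot C}{\nu_r(\varphi_C)}\;\geq\;\frac{\sum_{j\in I}a_j\, D\cdot C_j}{\sum_{j\in I}a_j\,\nu_r(\varphi_{C_j})}\;\geq\;\min_{j\in I}\frac{D\cdot C_j}{\nu_r(\varphi_{C_j})}\;\geq\; s_2,
\]
by a weighted-mean estimate.

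For the main equality $\varepsilon(D,\nu_r)=s_2$, set $M_t:=D^*-t\sum_{i=1}^r\nu_r(\mathfrak{m}_i)E_i^*$. By Kleiman's criterion, $M_t$ is nef iff $M_t\cdot\Gamma\geq 0$ for every irreducible curve $\Gamma\subset\tilde{S}$; such curves split into strict transforms $\tilde{C}$ of integral curves $C\subset S$ and strict transforms $\tilde{E}_j$, $1\leq j\leq r$, of the exceptional divisors. For a strict transform $\tilde{C}$, the projection formula yields $D^*\cdot\tilde{C}=D\cdot C$ and $E_i^*\cdot\tilde{C}=\mathrm{mult}_{p_i}(C)$, so \eqref{noether_formula} applied to $\varphi_C$ gives
\[
M_t\cdot\tilde{C}\;=\;D\cdot C-t\nu_r(\varphi_C);
\]
when $\nu_r(\varphi_C)=0$ this is nonnegative automatically by the nefness of $D$, and otherwise the condition $M_t\cdot\tilde{C}\geq 0$ is equivalent to $t\leq D\cdot C/\nu_r(\varphi_C)$.

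For a strict transform $\tilde{E}_j$, I use the recursive decomposition $E_j^*=\tilde{E}_j+\sum_{p_k\to p_j,\,k>j}E_k^*$ together with the orthogonality $E_i^*\cdot E_j^*=-\delta_{ij}$ and $D^*\cdot E_j^*=0$ to obtain $M_t\cdot E_j^*=t\nu_r(\mathfrak{m}_j)$, whence
\[
M_t\cdot\tilde{E}_j\;=\;t\Bigl(\nu_r(\mathfrak{m}_j)-\sum_{p_k\to p_j,\,k>j}\nu_r(\mathfrak{m}_k)\Bigr);
\]
by \eqref{proximity_equalities} this vanishes for every $j<r$, and it equals $t\nu_r(\mathfrak{m}_r)\geq 0$ for $j=r$. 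Consequently, the exceptional curves impose no constraint on $t$, and combining both computations gives $\varepsilon(D,\nu_r)=s_2$. The main subtlety is the bookkeeping in the exceptional case: the proximity equalities cancel the sum over proximate points \emph{precisely because} the weights in $M_t$ are the values $\nu_r(\mathfrak{m}_i)$, which is what encodes the valuation-theoretic content of the construction.
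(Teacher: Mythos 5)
Your proof is correct and takes essentially the same route as the paper's: nefness of $D^*-t\sum_{i=1}^r\nu_r(\mathfrak{m}_i)E_i^*$ is tested against the exceptional components via the proximity equalities \eqref{proximity_equalities} and against strict transforms of curves on $S$ via the Noether formula \eqref{noether_formula}, and the passage between arbitrary and integral curves is the same weighted-mean estimate (your treatment of components with $\nu_r(\varphi_{C_j})=0$ is in fact slightly more careful than the paper's). The only organizational difference --- you characterize the whole nef interval at once and obtain $\varepsilon(D,\nu_r)=s_2$ directly, while the paper proves $\varepsilon(D,\nu_r)=s_1$ by two inequalities (one by contradiction) and then $s_1=s_2$ --- is immaterial, and your appeal to Kleiman's criterion is unnecessary since nefness for $\mathbb{R}$-divisors is by definition tested on irreducible curves.
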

\begin{proof}
We first prove the equality $\varepsilon(D,\nu_r)=s_1.$ Consider the $\mathbb{R}$-divisor $$D_{s_1}=D^*-s_1\sum_{i=1}^r\nu_r(\mathfrak{m}_i)E_i^*$$ on $\tilde{S}$. It holds that $D_{s_1}$ is nef. Indeed, one has that
$$
s_1\leq \dfrac{D^*\cdot C^*}{\nu_r(\varphi_C)}
$$
for any curve $C$ on $S$ such that $\nu_r(\varphi_C)>0,$ where $C^*$ denotes the total transform of $C$ on $\tilde{S}$. Then, by \eqref{noether_formula}, $D_{s_1}\cdot \tilde{C}\geq 0$ for every curve $C$ on $S,$ since $D$ is nef  on $S$. Moreover, by \eqref{proximity_equalities}, $D_{s_1}\cdot E_i\geq 0$ for $1\leq i\leq r$. Therefore, we deduce that $D_{s_1}$ is nef and $s_1\leq \varepsilon(D,\nu_r).$ To finish the proof of our first equality, it remains to show that $s_1\geq \varepsilon(D,\nu_r).$ Reasoning by contradiction, consider a positive real number $t$ such that $s_1<t<\varepsilon(D,\nu_r).$ Then,
$$
\left(D^* - t\sum_{i=1}^r\nu_r(\mathfrak{m}_i)E_i^*\right)\cdot \tilde{C}\geq 0 \text{ and equivalently } D^*\cdot C^*\geq t\,\nu_r(\varphi_C),
$$
for all those curves $C$ on $S$ such that $\nu_r(\varphi_C)$ is positive. Consequently, $s_1\geq t$ and we obtain the desired contradiction.

To conclude the proof, we are going to show that $s_1=s_2$. Clearly it suffices to prove that $s_1\geq s_2.$ Set $C$ a curve on $S$ such that $\nu_r(\varphi_C)$ is positive. This curve can be written as $C=\sum_{i}\alpha_iC_i$, for finitely many integral curves $C_i$ and positive integers $\alpha_i$. As a consequence, we have
$$
\dfrac{D\cdot C}{\nu_r(\varphi_C)}=\dfrac{\sum_i \alpha_i\,D\cdot C_i}{\sum_i \alpha_i\,\nu_r(\varphi_{C_i})}\geq \min\Bigg\{\dfrac{D\cdot C_i}{\nu_r(\varphi_{C_i})}\Bigg\}\geq s_2,
$$
where the first inequality follows from the fact that
$$
\dfrac{a_1+a_2}{b_1+b_2}\geq \min\Bigg\{\dfrac{a_1}{b_1},\dfrac{a_2}{b_2}\Bigg\}, \text{ for any positive real numbers } a_1,a_2,b_1\text{ and } b_2.
$$
\end{proof}

\begin{definition}\label{computes}
Keep the notation of the above lemma. We say that an integral curve $C$ on $S$ \emph{computes the Seshadri constant} $\varepsilon(D,\nu_r)$ if $\nu_r(\varphi_C)>0$ and
$\varepsilon(D,\nu_r)=\dfrac{D\cdot C}{\nu_r(\varphi_C)}.$
\end{definition}

The following result is an extension of the Seshadri criterion for ampleness on surfaces (see \cite[Theorem 1.4.13]{Laz1}) by considering divisorial valuations.

\begin{theorem}\label{thm_criterion_amplitude}
Let $S$ be a smooth projective surface. A divisor $D$ on $S$ is ample if and only if the valuative Seshadri constant $\varepsilon(D,\nu_r)$ is positive for any point $p\in S$ and any divisorial valuation $\nu_r$ of $S$ centered at $\mathcal{O}_{S,p}.$
\end{theorem}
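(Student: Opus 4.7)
The plan is to invoke Lemma \ref{lemm:infimo_seshadri}, which recasts $\varepsilon(D, \nu_r)$ as the infimum of $D \cdot C / \nu_r(\varphi_C)$ over integral curves $C$ with $\nu_r(\varphi_C) > 0$, and to treat the two implications separately.

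For the forward direction, assume $D$ is ample. The classical Seshadri criterion \cite[Theorem 1.4.13]{Laz1} supplies $\varepsilon > 0$ with $D \cdot C \geq \varepsilon \cdot \text{mult}_x(C)$ for every integral curve $C$ and every $x \in C$. Given a divisorial valuation $\nu_r$ centered at $p = p_1$, I combine Noether's formula (\ref{noether_formula}), namely $\nu_r(\varphi_C) = \sum_{i=1}^r \nu_r(\mathfrak{m}_i) \text{mult}_{p_i}(C)$, with the monotonicity $\text{mult}_{p_i}(C) \leq \text{mult}_{p_1}(C)$ along the configuration of centers. This monotonicity follows by induction from the identity $\tilde{C}_i \cdot E_i = m_i$, where $m_i := \text{mult}_{p_i}(\tilde{C}_{i-1})$ and $\tilde{C}_i$ denotes the strict transform of $C$ on $S_i$ (obtained from $\pi_i^* \tilde{C}_{i-1} = \tilde{C}_i + m_i E_i$), together with the fact that $E_i \cong \mathbb{P}^1$ is smooth, so the multiplicity of $\tilde{C}_i$ at any point of $E_i$ is at most $m_i$. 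Setting $N := \sum_{i=1}^r \nu_r(\mathfrak{m}_i) > 0$, one obtains $\nu_r(\varphi_C) \leq N \cdot \text{mult}_{p_1}(C)$, hence $D \cdot C / \nu_r(\varphi_C) \geq \varepsilon/N$, and Lemma \ref{lemm:infimo_seshadri} gives $\varepsilon(D, \nu_r) \geq \varepsilon/N > 0$.

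For the backward direction, note that $\varepsilon(D, \nu_r)$ is only defined when $D$ is nef, so the hypothesis implicitly forces $D$ nef. Specializing to $r = 1$, Remark \ref{rem:Bounds_Sdnu} identifies $\varepsilon(D, \nu_1)$ with the one-point Seshadri constant $\varepsilon(D, p)$, so the hypothesis yields $\varepsilon(D, p) > 0$ for every $p \in S$. The upper bound (\ref{eq:upperbound_seshadri}) applied to $\nu_1$ (where $\overline{\beta}_{g+1}(\nu_1) = 1$) gives $\varepsilon(D, p) \leq \sqrt{D^2}$, forcing $D^2 > 0$. For any integral curve $C$ on $S$, picking any $p \in C$ and applying Lemma \ref{lemm:infimo_seshadri} yields $D \cdot C \geq \varepsilon(D, p) \cdot \text{mult}_p(C) > 0$. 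The Nakai--Moishezon criterion on surfaces then shows $D$ is ample. The main technical obstacle is the multiplicity monotonicity used in the forward direction: without it, Seshadri's uniform bound at the base point cannot be translated into control on the valuative quantity $\nu_r(\varphi_C)$, which aggregates contributions from all the infinitely near centers.
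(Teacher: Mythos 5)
Your proof is correct, but the forward implication follows a genuinely different route from the paper's. The paper proves that $D$ ample implies $\varepsilon(D,\nu_r)>0$ by producing, for $m\gg 0$, a nef divisor $F_m=mD^*-\sum_{i=1}^r\nu_r(\mathfrak{m}_i)E_i^*$ on $\tilde S$; nefness is obtained from the cone-theoretic Propositions \ref{prop:Lemma1} and \ref{prop:extremalray} (positive self-intersection forces the set of curves meeting $F_m$ negatively to be finite, and these are killed by increasing $m$), and then $F_m\cdot\tilde C\geq 0$ gives $\varepsilon(D,\nu_r)\geq 1/m$. You instead reduce directly to the classical pointwise Seshadri criterion via the monotonicity $\mathrm{mult}_{p_{i+1}}(\tilde C_i)\leq \tilde C_i\cdot E_i=\mathrm{mult}_{p_i}(\tilde C_{i-1})$ along the (simple) chain of infinitely near centers — a correct use of the proximity inequality — which together with Noether's formula yields the explicit bound $\varepsilon(D,\nu_r)\geq \varepsilon(D,p)/\sum_{i=1}^r\nu_r(\mathfrak{m}_i)$. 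Your argument is more elementary and self-contained (no cone machinery) and has the quantitative byproduct of relating the valuative constant to the classical one; the paper's argument instead produces an explicit nef divisor on $\tilde S$, consistent with the Zariski-chamber framework used elsewhere in the article, at the cost of a less explicit bound. For the converse both proofs specialize to $r=1$; your detour through the bound $\varepsilon(D,p)\leq\sqrt{D^2}$ and Nakai--Moishezon is in fact slightly more careful than the paper's one-line appeal to the classical criterion, since the hypothesis only gives pointwise, not uniform, positivity of the Seshadri constants.
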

\begin{proof}
Fix a divisorial valuation $\nu_r$ of $S$ as in the statement. Denote by $\tilde{S}$ the surface defined by $\nu_r$. Fix also an ample divisor $H$ on $\tilde{S}$ and consider the convex cone $$Q(\tilde{S}):=\{x\in \text{Num}_\mathbb{R}(\tilde{S}) \ |\ x^2\geq 0 \mbox{ and } x\cdot [H] \geq 0\}$$ defined in Subsection \ref{subsec:cones}. Let $D$ be an ample divisor on $S,$ we are going to prove that $\varepsilon(D,\nu_r)>0.$ Consider the set of divisors on $\tilde{S}$ $\; \{F_m:=mD^*-\sum_{i=1}^r\nu_r(\mathfrak{m}_i)E_i^*\}$,  where $m$ runs over the positive integers and $D^*$ is the total transform of $D$ on $\tilde{S}$. Then, for a sufficiently large positive integer $m,$ the divisor $F_m$ on $\tilde{S}$ is nef. Indeed, the intersection product of the divisor $F_m$ and the strict transform of any exceptional divisor is non-negative by \eqref{proximity_equalities}. In addition, since $D$ is ample, we can assume that the divisor $F_m$ has positive self-intersection considering a suitable positive integer $m$. Therefore, by Propositions \ref{prop:extremalray} and \ref{prop:Lemma1}, the set of integral curves on $\tilde{S}$ whose intersection product with $F_m$ is negative is finite. Consequently, increasing the value $m$ if it is necessary, $F_m$ will be nef (because $D$ is ample and these curves are the strict transforms of curves on $S$).

Now, consider any integral curve $C$ on $S$ such that $\nu_r(\varphi_C)>0$. Its strict transform $\tilde{C}$ on $\tilde{S}$ is linearly equivalent to  $C^*-\sum_{i=1}^r\text{mult}_{p_i}(\varphi_C)E_i^*$. Since $F_m$ is nef, applying the Noether formula \eqref{noether_formula} we have that
$$0\leq F_m\cdot \tilde{C}=mD\cdot C-\sum_{i=1}^r\text{mult}_{p_i}(\varphi_C)\nu_r(\mathfrak{m}_i)=mD\cdot C-\nu_r(\varphi_C).$$
Then, $$\dfrac{D\cdot C}{\nu_r(\varphi_C)}\geq \frac{1}{m}\text{ and, by Lemma \ref{lemm:infimo_seshadri}, }\varepsilon(D,\nu_r)>0.$$
The converse  follows by considering divisorial valuations defined by a one point blowup and applying the classical Seshadri criterion for ampleness.
\end{proof}

To conclude this subsection, we study some properties of $\varepsilon(D,\nu_r)$ and its connection with the value $\hat{\mu}_D(\nu_r)$ defined in Subsection \ref{subsec:mupico}.

\begin{proposition}\label{prop:sDnu_mupico}
Let $D$ be a big and nef divisor on $S$ and $\nu_r$ a divisorial valuation of $S$. Then, the following inequalities hold.
\begin{enumerate}
\item[(a)] $\varepsilon(D,\nu_r)\overline{\beta}_{g+1}(\nu_r)\leq \hat{\mu}_D(\nu_r)$.
\item[(b)]$\varepsilon(D,\nu_r)\hat{\mu}_D(\nu_r)\leq D^2.$
\end{enumerate}
\end{proposition}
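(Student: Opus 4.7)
The plan is to handle the two inequalities separately. Inequality~(b) follows directly from the infimum description of $\varepsilon(D,\nu_r)$ in Lemma~\ref{lemm:infimo_seshadri} applied to the sections used to compute $\hat{\mu}_D(\nu_r)$. Given any nonzero $f\in H^0(S,\mathcal{O}_S(mD))$ with $\nu_r(f)>0$, write $\mathrm{div}(f)=\sum_i\alpha_iC_i$ with $C_i$ integral. For each index with $\nu_r(\varphi_{C_i})>0$ the lemma gives $\nu_r(\varphi_{C_i})\leq (D\cdot C_i)/\varepsilon(D,\nu_r)$, while the remaining components contribute $0$ to $\nu_r(f)$ and a non-negative quantity to $D\cdot\mathrm{div}(f)$ since $D$ is nef. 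Summing yields $\nu_r(f)\leq mD^2/\varepsilon(D,\nu_r)$; dividing by $m$ and passing to the limit in \eqref{eq:defmupico} produces $\hat{\mu}_D(\nu_r)\leq D^2/\varepsilon(D,\nu_r)$, which is~(b).

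For~(a) I rely on Proposition~\ref{prop:mupico_geometry}, which recasts $\hat{\mu}_D(\nu_r)$ as $\sup\{t>0\mid D^*-tE_r\text{ is big}\}$, and exhibit a big representative of $D^*-t\,\overline{\beta}_{g+1}(\nu_r)E_r$ for every $t<\varepsilon(D,\nu_r)$. Put $M:=\sum_{i=1}^r\nu_r(\mathfrak{m}_i)E_i^*$. By the very definition of $\varepsilon(D,\nu_r)$ and the closedness of the nef cone, $D^*-\varepsilon(D,\nu_r)M$ is nef. The crucial step is to prove that, viewed as a Weil divisor on $\tilde{S}$, $M$ is effective and contains $E_r$ with coefficient $\overline{\beta}_{g+1}(\nu_r)$. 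Let $\varphi_r$ be the germ introduced in Section~\ref{subsec:invariantsvaluation}, whose strict transform meets $E_r$ transversally at a general point and therefore satisfies $\mathrm{mult}_{p_i}(\varphi_r)=\nu_r(\mathfrak{m}_i)$ for all $i$. The standard pullback formula for composite blowups reads
\[
\pi^*\mathrm{div}(\varphi_r)=\widetilde{\mathrm{div}(\varphi_r)}+M,
\]
displaying $M$ as the difference of two effective divisors in which the strict transform appears with coefficient one, hence $M$ is effective. Reading the coefficient of $E_r$ on both sides and using $\overline{\beta}_{g+1}(\nu_r)=\nu_r(\varphi_r)$ together with \eqref{betabarra_gmasuno} gives $M=\overline{\beta}_{g+1}(\nu_r)E_r+N'$ with $N'$ effective.

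With this in place, the argument closes by convexity. For every $0<t<\varepsilon(D,\nu_r)$,
\[
D^*-tM=\left(1-\tfrac{t}{\varepsilon(D,\nu_r)}\right)D^*+\tfrac{t}{\varepsilon(D,\nu_r)}\bigl(D^*-\varepsilon(D,\nu_r)M\bigr)
\]
is a strictly positive combination of the big class $[D^*]$ (since $D$ is ample, $D^*$ is big and nef) and a nef class, and hence is big. Adding the effective divisor $tN'$ preserves bigness, and the identity $D^*-tM+tN'=D^*-t\,\overline{\beta}_{g+1}(\nu_r)E_r$ exhibits the latter as big. Therefore $\hat{\mu}_D(\nu_r)\geq t\,\overline{\beta}_{g+1}(\nu_r)$ for every $t<\varepsilon(D,\nu_r)$, and letting $t\to\varepsilon(D,\nu_r)^-$ yields~(a).

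The main obstacle is the passage from $M$, naturally encoded in the total-transform basis $\{E_i^*\}$, to its expression in the strict-transform basis with precise coefficient at $E_r$. This is the step that couples the valuative Seshadri constant (which involves every exceptional divisor $E_i$) with the bigness invariant $\hat{\mu}_D(\nu_r)$ (which only sees $E_r$) and accounts for the factor $\overline{\beta}_{g+1}(\nu_r)=[\text{vol}(\nu_r)]^{-1}$ appearing in~(a); everything else is a routine consequence of the bigness and nefness properties enjoyed by $D^*$ and $D^*-\varepsilon(D,\nu_r)M$ once the effective decomposition $M=\overline{\beta}_{g+1}(\nu_r)E_r+N'$ has been secured.
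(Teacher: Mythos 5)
Your proposal is correct, but for part (a) it follows a genuinely different route from the paper. The paper's proof of (a) is a two-line pivot through the quantity $\sqrt{D^2\,\overline{\beta}_{g+1}(\nu_r)}$: it combines the upper bound $\varepsilon(D,\nu_r)\leq\sqrt{D^2/\overline{\beta}_{g+1}(\nu_r)}$ of Remark \ref{rem:Bounds_Sdnu} with the lower bound $\hat{\mu}_D(\nu_r)\geq\sqrt{D^2\,\overline{\beta}_{g+1}(\nu_r)}$ imported from \eqref{Eq_desigualdadmupico} (a result of \cite{BouKurMacSze}). You instead avoid that external input: using the identity $\sum_{i=1}^r\nu_r(\mathfrak{m}_i)E_i^*=\overline{\beta}_{g+1}(\nu_r)E_r+\sum_{i=1}^{r-1}\nu_r(\varphi_i)E_i$ (exactly the change of basis the paper invokes via \cite[Lemma 1.1.32]{Alb} in Proposition \ref{prop:decZar}), you show $D^*-t\,\overline{\beta}_{g+1}(\nu_r)E_r$ is big for every $t<\varepsilon(D,\nu_r)$ as (big nef) $+$ (nef) $+$ (effective), and then appeal to Proposition \ref{prop:mupico_geometry}; this is more self-contained and in effect reproves part of Proposition \ref{prop:decZar}, at the price of being longer than the paper's argument. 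Your proof of (b) is essentially the paper's: the paper pairs the nef class $P_\varepsilon=D^*-\varepsilon(D,\nu_r)\sum\nu_r(\mathfrak{m}_i)E_i^*$ with strict transforms of curves in $|mD|$ via Noether's formula, while you apply the infimum description of Lemma \ref{lemm:infimo_seshadri} componentwise to $\mathrm{div}(f)$ — the same mechanism, since that lemma is itself proved by the nefness of such a class. Two cosmetic points: the hypothesis is that $D$ is big and nef, not ample, so justify bigness of $D^*$ by birational invariance of bigness rather than by ampleness (the argument is unaffected); and in (b) the division by $\varepsilon(D,\nu_r)$ tacitly assumes $\varepsilon(D,\nu_r)>0$ — either note that the case $\varepsilon(D,\nu_r)=0$ is trivial, or keep the inequality in the product form $\varepsilon(D,\nu_r)\,\nu_r(\varphi_{C_i})\leq D\cdot C_i$ throughout.
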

\begin{proof}
By Remark \ref{rem:Bounds_Sdnu} and \eqref{Eq_desigualdadmupico}, one has that
$$
\varepsilon(D,\nu_r)\overline{\beta}_{g+1}(\nu_r)\leq \sqrt{\dfrac{D^2}{\overline{\beta}_{g+1}(\nu_r)}}\overline{\beta}_{g+1}(\nu_r)=\sqrt{D^2\overline{\beta}_{g+1}(\nu_r)}\leq \hat{\mu}_D(\nu_r),
$$
which proves the inequality in (a).

Let us see (b). Since $P_{\varepsilon}:=D^*-\varepsilon(D,\nu_r)\sum_{i=1}^r\nu_r(\mathfrak{m}_i)E_i^*$ is a nef divisor, in particular it holds that $P_{\varepsilon}\cdot \tilde{C}\geq 0$ for any curve $C$ on $S$ such that $C\in |mD|,$  $m\gg 0$ and $\nu_r(\varphi_C)>0$, where $\tilde{C}$ is the strict transform of $C$ on the surface $\tilde{S}$ defined by $\nu_r$. Then, the Noether formula shows
\begin{equation*}
0\leq P_{\varepsilon}\cdot \tilde{C}=mD^2-\varepsilon(D,\nu_r)\nu_r(\varphi_C),
\end{equation*}
 which implies
\begin{equation*}
D^2\geq \varepsilon(D,\nu_r)\hat{\mu}_D(\nu_r),
\end{equation*}
and the proof is completed.
\end{proof}

\subsection{Minimal plane divisorial valuations with respect to an ample divisor}\label{subsec:minimalvaluation}

This subsection studies minimality of divisorial valuations of smooth (complex) projective surfaces with respect to ample divisors. Particularizing to the projective plane and a ge\-neral projective line, we obtain several formulations of the valuative Nagata conjecture. We start by providing the Zariski decomposition of a specific family of divisors.

\begin{proposition}\label{prop:decZar}
Keep the above notation. Let $D$ be an ample divisor on $S$ and set $D_t:=D^*-tE_r,$ $0\leq t\leq \hat{\mu}_D(\nu_r)$. Then, $D_t=P_t+N_t,$ where
$$
P_t:=D^*-\frac{t}{\overline{\beta}_{g+1}(\nu_r)}\sum_{i=1}^r\nu_r(\mathfrak{m}_i)E_i^* \text{ and }N_t:=\frac{t}{\overline{\beta}_{g+1}(\nu_r)}\sum_{i=1}^{r-1}\nu_r(\varphi_i)E_i,
$$
is the Zariski decomposition of $D_t$ for $0\leq t \leq \varepsilon(D,\nu_r)\overline{\beta}_{g+1}(\nu_r).$ In addition, there is no value $t'>\varepsilon(D,\nu_r)\overline{\beta}_{g+1}(\nu_r)$ such that $P_{t'}$ is a nef \ $\mathbb{R}$-divisor.
\end{proposition}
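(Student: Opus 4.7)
The plan is to identify the displayed $P_t$ and $N_t$ as the Zariski decomposition of $D_t$ on the interval $0\leq t\leq \varepsilon(D,\nu_r)\overline{\beta}_{g+1}(\nu_r)$ by verifying the four defining conditions: (i) $D_t=P_t+N_t$; (ii) $N_t$ is effective; (iii) the intersection matrix of the support of $N_t$ is negative definite; (iv) $P_t$ is nef and $P_t\cdot E_i=0$ for every component $E_i$ of $N_t$. Uniqueness of the Zariski decomposition will then deliver the statement. Conditions (ii) and (iii) are immediate: $N_t$ is a non-negative combination of $E_1,\dots,E_{r-1}$ since $\nu_r(\varphi_i)>0$ and $t\geq 0$, and the exceptional divisors of a birational morphism of smooth surfaces always have negative-definite intersection form.

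The algebraic identity (i) is the heart of the argument. Multiplying through by $\overline{\beta}_{g+1}(\nu_r)/t$ and using both $\nu_r(\varphi_r)=\overline{\beta}_{g+1}(\nu_r)$ (from the Noether formula \eqref{noether_formula} and equation \eqref{betabarra_gmasuno}) and $E_r=E_r^*$ (no center is proximate to $p_r$), one reduces (i) to
$$
\sum_{i=1}^r \nu_r(\mathfrak{m}_i)\,E_i^* \;=\; \sum_{j=1}^r \nu_r(\varphi_j)\,E_j.
$$
The plan is to establish this by expanding $E_i^*=\sum_j \mathrm{mult}_{p_i}(\varphi_j)\,E_j$: this change of basis is the matrix inverse of the proximity relation $E_i=E_i^*-\sum_{j>i,\,p_j\to p_i}E_j^*$, and its entries coincide with the curvette multiplicities because the multiplicity sequence of $\varphi_j$ solves the same proximity system as the $j$-th column of the inverse proximity matrix. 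Substituting, the coefficient of $E_j$ on the left becomes $\sum_i \nu_r(\mathfrak{m}_i)\,\mathrm{mult}_{p_i}(\varphi_j)$, which equals $\nu_r(\varphi_j)$ by \eqref{noether_formula}, matching the right-hand side.

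For (iv), from $E_j^*\cdot E_k^*=-\delta_{jk}$ and the proximity relation for divisors one gets $E_j^*\cdot E_i=-\delta_{ij}+\mathbf{1}_{\{j>i,\,p_j\to p_i\}}$, so
$$
P_t\cdot E_i \;=\; \frac{t}{\overline{\beta}_{g+1}(\nu_r)}\Bigl(\nu_r(\mathfrak{m}_i)-\sum_{j>i,\,p_j\to p_i}\nu_r(\mathfrak{m}_j)\Bigr),
$$
which vanishes for $i<r$ by the proximity equality \eqref{proximity_equalities} and equals $t\,\nu_r(\mathfrak{m}_r)/\overline{\beta}_{g+1}(\nu_r)\geq 0$ for $i=r$. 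For an integral curve $C$ on $S$ with strict transform $\tilde C=C^*-\sum_i \mathrm{mult}_{p_i}(\varphi_C)\,E_i^*$, the same orthogonality together with Noether's formula yields
$$
P_t\cdot \tilde C \;=\; D\cdot C - \frac{t}{\overline{\beta}_{g+1}(\nu_r)}\,\nu_r(\varphi_C),
$$
which is non-negative precisely when $t/\overline{\beta}_{g+1}(\nu_r)\leq (D\cdot C)/\nu_r(\varphi_C)$ for every such $C$ with $\nu_r(\varphi_C)>0$ (the case $\nu_r(\varphi_C)=0$ follows from ampleness of $D$). By Lemma \ref{lemm:infimo_seshadri} this is exactly the range $t\leq \varepsilon(D,\nu_r)\overline{\beta}_{g+1}(\nu_r)$.

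The last assertion is immediate from Definition \ref{def:sdnu}: if $P_{t'}$ were nef for some $t'>\varepsilon(D,\nu_r)\overline{\beta}_{g+1}(\nu_r)$, then $t'/\overline{\beta}_{g+1}(\nu_r)$ would strictly exceed the supremum defining $\varepsilon(D,\nu_r)$, a contradiction. The main obstacle is the combinatorial identity $\sum_i \nu_r(\mathfrak{m}_i)E_i^*=\sum_j \nu_r(\varphi_j)E_j$; once this bridge between the proximity matrix, the curvette multiplicity sequences and the Noether formula is in hand, the remaining verifications are routine intersection-theoretic bookkeeping on $\tilde S$.
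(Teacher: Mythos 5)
Your proof is correct and follows essentially the same route as the paper: the same identity $\sum_i\nu_r(\mathfrak{m}_i)E_i^*=\sum_j\nu_r(\varphi_j)E_j$ via the inverse proximity matrix and Noether's formula, orthogonality $P_t\cdot E_i=0$ from the proximity equalities, and nefness of $P_t$ on strict transforms via Lemma \ref{lemm:infimo_seshadri}. The only (harmless) variation is the final assertion, which you deduce directly from the supremum in Definition \ref{def:sdnu} rather than extracting an explicit violating curve as the paper does.
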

\begin{proof}
Assume $0\leq t \leq \varepsilon(D,\nu_r)\overline{\beta}_{g+1}(\nu_r)$ and let us show that $P_t+N_t$ is the Zariski decomposition of $D_t$. The equality
$\sum_{i=1}^r\nu_r(\mathfrak{m}_i)E_i^*=\sum_{i=1}^r\nu_r(\varphi_i)E_i$ holds by \cite[Lemma 1.1.32]{Alb} and \eqref{noether_formula} and thus, $D_t=P_t+N_t$.
By (\ref{proximity_equalities}), $P_t\cdot E_i= 0,$ for $1\leq i< r,$ and $P_t\cdot E_r > 0$. To conclude the proof of our first statement, by definition of the $\mathbb{R}$-divisor $P_t,$ it suffices to prove that $P_t\cdot \tilde{C}\geq 0,$ $0<t\leq \varepsilon(D,\nu_r)\overline{\beta}_{g+1}(\nu_r),$ for all the curves $C$ on $S$ such that $\nu_r(\varphi_C)>0$. Indeed,
$$
P_t\cdot \tilde{C}= \dd\cdot\cc - \frac{t}{\overline{\beta}_{g+1}(\nu_r)}\nu_r(\varphi_C)\geq \dd\cdot\cc - \varepsilon(D,\nu_r)\nu_r(\varphi_C)\geq 0.
$$

Let us prove our second statement. Reasoning by contradiction, assume that there exists a value $t'>\varepsilon(D,\nu_r)\overline{\beta}_{g+1}(\nu_r)$ such that $P_{t'}$ is a nef $\mathbb{R}$-divisor. By Lemma \ref{lemm:infimo_seshadri}, there exists a curve $C$ on $S$ such that
\begin{equation}\label{condition:prop_decZar}
\varepsilon(D,\nu_r)\leq \frac{D\cdot C}{\nu_r(\varphi_C)}<\frac{t'}{\overline{\beta}_{g+1}(\nu_r)}.
\end{equation}
Let $\tilde{C}$ be the strict transform of $C$ on the surface $\tilde{S}$ defined by $\nu_r$. Since the $\mathbb{R}$-divisor $P_{t'}$ is nef, one has that $P_{t'}\cdot \tilde{C}\geq 0$ and then
$$
0\leq P_{t'}\cdot \tilde{C} = D^*\cdot C^* -\frac{t'}{\overline{\beta}_{g+1}(\nu_r)}\nu_r(\varphi_C)<D^*\cdot C^* -  \frac{D\cdot C}{\nu_r(\varphi_C)}\nu_r(\varphi_C)=0,
$$
which gives the desired contradiction. Notice that the second inequality holds by \eqref{condition:prop_decZar}.
\end{proof}

\begin{corollary}\label{cor:BoundZarCham}
Keep the above notation and assume that $\hat{\mu}_D(\nu_r)>\varepsilon(D,\nu_r)\overline{\beta}_{g+1}(\nu_r)$. Then, the $\mathbb{R}$-divisor $P_{\varepsilon}:=D^*-\varepsilon(D,\nu_r)\sum_{i=1}^r\nu_r(\mathfrak{m}_i)E_i^*$ on the surface $\tilde{S}$ defined by $\nu_r$ is big and nef, and the numerical equivalence class of the $\mathbb{R}$-divisor $$D_{\varepsilon}:=D^*-\varepsilon(D,\nu_r)\overline{\beta}_{g+1}(\nu_r)E_r$$ lies on the boundary of a Zariski chamber of the big cone of $\tilde{S}$.
\end{corollary}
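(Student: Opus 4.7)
Set $t_0:=\varepsilon(D,\nu_r)\overline{\beta}_{g+1}(\nu_r)$, so that $D_\varepsilon=D^*-t_0E_r$ is exactly the divisor $D_{t_0}$ of Proposition \ref{prop:decZar}. The plan is to extract both claims directly from that proposition, using Proposition \ref{prop:mupico_geometry} to secure bigness and combining Proposition \ref{pro:KLM} with Remark \ref{remark:thmKLM} to detect a chamber transition at $t_0$.

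First I would verify that $D_{t_0}$ is big. The hypothesis reads $t_0<\hat{\mu}_D(\nu_r)$, and Proposition \ref{prop:mupico_geometry} identifies $\hat{\mu}_D(\nu_r)$ with $\mu(D^*,E_r)=\sup\{t>0\mid D^*-tE_r\text{ is big}\}$; since bigness is an open condition on $\mathrm{Num}_{\mathbb{R}}(\tilde{S})$, the locus of $t$ for which $D^*-tE_r$ is big is open, so $D_{t_0}$ itself is big. Applied at $t=t_0$, Proposition \ref{prop:decZar} then yields the Zariski decomposition $D_{t_0}=P_\varepsilon+N_{t_0}$. The positive part $P_\varepsilon$ is nef by construction, and $P_\varepsilon^2=\vol(D_{t_0})>0$ because $D_{t_0}$ is big; hence $P_\varepsilon$ is big as well, settling the first assertion.

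For the second assertion I would show that the support of the negative part of the Zariski decomposition of $D_t$ necessarily jumps at $t=t_0$. By Proposition \ref{prop:decZar}, for every $t\in(0,t_0]$ that support equals $\{E_1,\dots,E_{r-1}\}$. Suppose, for contradiction, that for some $t'\in(t_0,\hat{\mu}_D(\nu_r))$ the negative part of the Zariski decomposition of $D_{t'}$ were again supported on $\{E_1,\dots,E_{r-1}\}$. Because the intersection matrix of those exceptional divisors is negative definite, the orthogonality relations $P\cdot E_i=0$ for $1\leq i<r$ determine the positive part of any such Zariski decomposition uniquely; a direct check identifies this unique solution with the $\mathbb{R}$-divisor $P_{t'}$ written down in Proposition \ref{prop:decZar}. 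But the second assertion of that same proposition forbids $P_{t'}$ from being nef when $t'>t_0$, a contradiction. Hence for every sufficiently small $\varepsilon>0$ the supports of $N_{t_0}$ and $N_{t_0+\varepsilon}$ differ. By Proposition \ref{pro:KLM} together with Remark \ref{remark:thmKLM}, this is precisely the condition that the ray $[D_t]$ crosses from one Zariski chamber of $\mathrm{Big}(\tilde{S})$ into another at $t=t_0$, so $[D_\varepsilon]=[D_{t_0}]$ lies on the common boundary of those two chambers.

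The main technical obstacle I foresee is the uniqueness argument used to reach this contradiction: one must justify that once the negative part of a Zariski decomposition is prescribed to be supported in a fixed exceptional configuration with negative definite intersection form, the coefficients---and hence the whole positive part---are forced and coincide with the explicit formula of Proposition \ref{prop:decZar}. This is a standard consequence of the negative definiteness of the exceptional intersection matrix, but it is the one nontrivial step in the outline; once it is in hand, the rest is bookkeeping with the results already assembled in Section \ref{sec:pre}.
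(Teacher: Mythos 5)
Your proposal is correct and follows the same overall strategy as the paper's proof: bigness and nefness of $P_{\varepsilon}$ come from Proposition \ref{prop:decZar}, and the chamber-boundary statement comes from the fact that no $P_{t'}$ with $t'>\varepsilon(D,\nu_r)\overline{\beta}_{g+1}(\nu_r)$ can be nef. The one place where you diverge is the bigness of $P_{\varepsilon}$: the paper gets it by a direct self-intersection estimate, $P_{\varepsilon}^2=(D^*)^2-\varepsilon(D,\nu_r)^2\overline{\beta}_{g+1}(\nu_r)>(D^*)^2-\hat{\mu}_D(\nu_r)\,\varepsilon(D,\nu_r)\geq 0$, using Proposition \ref{prop:sDnu_mupico}(b) and the hypothesis; you instead invoke Proposition \ref{prop:mupico_geometry} to see that $D_{t_0}$ is big (openness of the big cone) and then use Fujita's identity $\vol(D_{t_0})=P_{\varepsilon}^2$. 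Both are valid; the paper's route is self-contained within its own lemmas, while yours imports the standard volume identity for Zariski decompositions, which the paper never states explicitly. For the second assertion the paper only writes ``follows from Proposition \ref{prop:decZar}''; your uniqueness argument (negative definiteness of the intersection matrix of $E_1,\dots,E_{r-1}$ forces the positive part to be $P_{t'}$ whenever the negative part is supported on all of these curves, which is then ruled out by nefness failing for $t'>t_0$) is exactly the missing bookkeeping, and it correctly feeds into Proposition \ref{pro:KLM} and Remark \ref{remark:thmKLM} to place $[D_{\varepsilon}]$ on a chamber wall.
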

\begin{proof}
Let us first prove that the $\mathbb{R}$-divisor $P_{\varepsilon}$ is big. $P_{\varepsilon}$ is a nef $\mathbb{R}$-divisor by Proposition \ref{prop:decZar}.  By \cite[Theorem 2.2.6]{Laz1}, $P_{s}$ is big if and only if the self-intersection of $P_{\varepsilon}$ is positive. Indeed, by Proposition \ref{prop:sDnu_mupico} and the inequality $\hat{\mu}_D(\nu_r)>\varepsilon(D,\nu_r)\overline{\beta}_{g+1}(\nu_r),$ one obtains that
$$
P_{\varepsilon}^2=(D^*)^2-\varepsilon(D,\nu_r)^2\overline{\beta}_{g+1}(\nu_r)>(D^*)^2-\hat{\mu}_D(\nu_r)\varepsilon(D,\nu_r)\geq 0,
$$
which proves our claim. Finally, our last statement follows from Proposition \ref{prop:decZar}.

\end{proof}

Next we state our first main result in this paper (Theorem \ref{Intro_thm:triangulo} in the introduction). Here, we consider a divisorial valuation of a smooth projective surface $S$ and provide several equivalent conditions to the fact that it is minimal with respect to an ample divisor on $S$. Several equivalent statements for the valuative Nagata conjecture can be immediately deduced from this result.

\begin{theorem}\label{thm:triangulo}
Let $D$ be an ample divisor on a smooth projective surface $S$, $\nu$ an exceptional curve valuation of $S$ and $\nu_r$ its first component. Keep the above notation and set $\tilde{S}$ the surface defined by the divisorial valuation $\nu_r.$ Then, the following statements are equivalent:
\begin{enumerate}[(a)]
\item The divisorial valuation $\nu_r$ is minimal with respect to $D$.

\item The Newton-Okounkov body of $D$ with respect to $\nu$ coincides with the triangle $\mathfrak{T}_D(\nu)$ defined in Proposition \ref{prop:335}.

\item The $\mathbb{R}$-divisor
$$
P_\mu:=D^* - \dfrac{\hat{\mu}_D(\nu_r)}{[\text{\emph{vol}}(\nu_r)]^{-1}} \sum_{i=1}^r \nu_r (\mathfrak{m}_i) \cdot E_i^*
$$
is nef and satisfies that $P_\mu^2=0$.

\item It holds that $\hat{\mu}_D(\nu_r)=\varepsilon(D,\nu_r)[\text{\emph{vol}}(\nu_r)]^{-1}$.

\item The segment $\{[D^*-tE_r]\mid 0\leq t\leq \hat{\mu}_D(\nu_r)\}$ crosses only one Zariski chamber of the big cone of $\tilde{S}$, $\text{ \emph{Big}}(\tilde{S}).$

\item It holds that $$\varepsilon(D,\nu_r)=\sqrt{\frac{D^2}{[\text{\emph{vol}}(\nu_r)]^{-1}}}.$$

\end{enumerate}
\end{theorem}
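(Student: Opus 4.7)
The plan is to run the cyclic chain
\[
(a)\Rightarrow(b)\Rightarrow(e)\Rightarrow(d)\Rightarrow(a)
\]
and then dispose of $(d)\Leftrightarrow(c)$ and $(d)\Leftrightarrow(f)$ as short side steps. Throughout I use that $[\text{vol}(\nu_r)]^{-1}=\overline{\beta}_{g+1}(\nu_r)$ and, because $D$ is ample, $\text{vol}_S(D)=D^2$.

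For $(a)\Leftrightarrow(b)$ the main input is an area comparison. Proposition \ref{prop:335} gives the containment $\Delta_\nu(D)\subseteq\mathfrak{T}_D(\nu)$ and the vertices of the triangle; in the satellite case, Lemma \ref{lem:336}(a) reduces $\big|\overline{\beta}_0(\nu_\eta)/\overline{\beta}_0(\nu_r) - \overline{\beta}_{g^*}(\nu_\eta)/\overline{\beta}_{g^*}(\nu_r)\big|$ to $1/\overline{\beta}_{g+1}(\nu_r)$, so in both the free and satellite cases $\text{vol}_{\mathbb{R}^2}(\mathfrak{T}_D(\nu))=\hat{\mu}_D(\nu_r)^2/(2\overline{\beta}_{g+1}(\nu_r))$, while $\text{vol}_{\mathbb{R}^2}(\Delta_\nu(D))=D^2/2$. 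Since both bodies are convex with nonempty interior, the inclusion is an equality iff the areas match iff $\hat{\mu}_D(\nu_r)^2=D^2\cdot\overline{\beta}_{g+1}(\nu_r)$, which is (a). For $(b)\Rightarrow(e)$: when $\Delta_\nu(D)=\mathfrak{T}_D(\nu)$, the boundary maps $\alpha(t),\beta(t)$ from Proposition \ref{pro:LM-alphabeta} are linear on $[0,\hat{\mu}_D(\nu_r)]$, and Proposition \ref{pro:KLM} together with Remark \ref{remark:thmKLM} translates this into the absence of any Zariski chamber transition of $[D^*-tE_r]$ along the segment, i.e.\ into (e).

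For $(e)\Rightarrow(d)$: Proposition \ref{prop:decZar} yields an explicit Zariski decomposition $D_t=P_t+N_t$ with fixed negative support on $(0,\varepsilon(D,\nu_r)\overline{\beta}_{g+1}(\nu_r)]$, while Corollary \ref{cor:BoundZarCham} shows that once $\hat{\mu}_D(\nu_r)>\varepsilon(D,\nu_r)\overline{\beta}_{g+1}(\nu_r)$, the class $[D^*-\varepsilon(D,\nu_r)\overline{\beta}_{g+1}(\nu_r)E_r]$ lies on a chamber wall, forcing a chamber change at that parameter. Hypothesis (e) therefore forces $\hat{\mu}_D(\nu_r)\leq\varepsilon(D,\nu_r)\overline{\beta}_{g+1}(\nu_r)$, and the reverse inequality from Proposition \ref{prop:sDnu_mupico}(a) gives equality, that is (d). To close the cycle, for $(d)\Rightarrow(a)$ I insert $\hat{\mu}_D(\nu_r)=\varepsilon(D,\nu_r)\overline{\beta}_{g+1}(\nu_r)$ into the bound $\varepsilon(D,\nu_r)\hat{\mu}_D(\nu_r)\leq D^2$ of Proposition \ref{prop:sDnu_mupico}(b) to obtain $\hat{\mu}_D(\nu_r)^2\leq D^2\cdot\overline{\beta}_{g+1}(\nu_r)$, which together with the lower bound \eqref{Eq_desigualdadmupico} forces equality and hence (a).

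The two side equivalences are brief. For $(d)\Leftrightarrow(f)$ one juggles the three inequalities $\varepsilon(D,\nu_r)\overline{\beta}_{g+1}(\nu_r)\leq\hat{\mu}_D(\nu_r)$, $\hat{\mu}_D(\nu_r)\geq\sqrt{D^2\cdot\overline{\beta}_{g+1}(\nu_r)}$ and $\varepsilon(D,\nu_r)\leq\sqrt{D^2/\overline{\beta}_{g+1}(\nu_r)}$ supplied by Proposition \ref{prop:sDnu_mupico}(a), \eqref{Eq_desigualdadmupico} and Remark \ref{rem:Bounds_Sdnu}: equality in any one of them forces equality in the other two. For $(d)\Leftrightarrow(c)$, the orthogonality $E_i^*\cdot E_j^*=-\delta_{ij}$ combined with \eqref{betabarra_gmasuno} yields $\big(\sum_i\nu_r(\mathfrak{m}_i)E_i^*\big)^2=-\overline{\beta}_{g+1}(\nu_r)$, and therefore
\[
P_\mu^2=D^2-\frac{\hat{\mu}_D(\nu_r)^2}{\overline{\beta}_{g+1}(\nu_r)},
\]
which vanishes iff (a); on the other hand $P_\mu$ is precisely the $P_t$ of Proposition \ref{prop:decZar} at $t=\hat{\mu}_D(\nu_r)$, whose second assertion states that $P_\mu$ is nef iff $\hat{\mu}_D(\nu_r)\leq\varepsilon(D,\nu_r)\overline{\beta}_{g+1}(\nu_r)$, i.e.\ iff (d). I expect the main obstacle to be $(e)\Leftrightarrow(d)$: isolating the sharp chamber transition at $t=\varepsilon(D,\nu_r)\overline{\beta}_{g+1}(\nu_r)$ via Corollary \ref{cor:BoundZarCham} and marrying it to the fixed-support statement of Proposition \ref{prop:decZar} will demand careful bookkeeping, and a similarly attentive satellite-versus-free case analysis is needed when matching the boundary maps $\alpha,\beta$ to the sides of $\mathfrak{T}_D(\nu)$ in $(b)\Leftrightarrow(e)$.
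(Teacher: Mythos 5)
Your argument is correct in substance but organizes the equivalences differently from the paper, and in one place more efficiently. The paper proves $(a)\Leftrightarrow(b)$ by the same area comparison you use, but then runs the chain $(b)\Rightarrow(c)\Rightarrow(d)\Rightarrow(e)\Rightarrow(f)\Rightarrow(a)$; its hardest step is $(b)\Rightarrow(c)$, where nefness of $P_\mu$ is established by contradiction: a failure of nefness yields $\hat{\mu}_D(\nu_r)>\varepsilon(D,\nu_r)\overline{\beta}_{g+1}(\nu_r)$ via Lemma \ref{lemm:infimo_seshadri}, and then Proposition \ref{prop:decZar}, Corollary \ref{cor:BoundZarCham}, Proposition \ref{pro:KLM} and Remark \ref{remark:thmKLM} force an extra vertex of $\Delta_\nu(D)$, contradicting $(b)$. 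You instead go $(b)\Rightarrow(e)$ directly (triangle $\Rightarrow$ linear $\alpha,\beta$ $\Rightarrow$ no wall crossing by Proposition \ref{pro:KLM} and Remark \ref{remark:thmKLM}), close the cycle with a shorter $(d)\Rightarrow(a)$ (the paper reaches $(a)$ only through $(e)$ and $(f)$), and then recover $(c)$ cheaply from the identity $P_\mu^2=D^2-\hat{\mu}_D(\nu_r)^2/\overline{\beta}_{g+1}(\nu_r)$ together with the nefness threshold of Proposition \ref{prop:decZar}; the paper's route has the advantage of exhibiting the Zariski decomposition of $D^*-\hat{\mu}_D(\nu_r)E_r$ explicitly (which it reuses afterwards, e.g.\ in Remark \ref{Rem_consThm_minimal}), while yours is leaner and pushes more weight onto the K\"uronya--Lozovanu--Maclean machinery, which the paper's own proof also invokes, so no new input is needed. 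Your $(e)\Rightarrow(d)$ step uses Corollary \ref{cor:BoundZarCham} exactly as the paper does in its $(e)\Rightarrow(f)$ step, so it is on the same footing.

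One small repair: in the side step $(d)\Leftrightarrow(f)$ the blanket claim that equality in any one of the three listed inequalities forces equality in the other two is an overstatement. The direction you actually need, $(f)\Rightarrow(d)$, does not follow from Proposition \ref{prop:sDnu_mupico}(a), \eqref{Eq_desigualdadmupico} and Remark \ref{rem:Bounds_Sdnu} alone (take $\varepsilon=\sqrt{D^2/\overline{\beta}_{g+1}(\nu_r)}$ and $\hat{\mu}_D(\nu_r)$ artificially large: all three hold, the first two strictly); you must also invoke Proposition \ref{prop:sDnu_mupico}(b), $\varepsilon(D,\nu_r)\hat{\mu}_D(\nu_r)\leq D^2$, which caps $\hat{\mu}_D(\nu_r)$ by $\sqrt{D^2\,\overline{\beta}_{g+1}(\nu_r)}$ and then gives equality everywhere. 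Since you already use that inequality in your $(d)\Rightarrow(a)$ step, the fix is immediate; also note that equality in \eqref{Eq_desigualdadmupico} alone (i.e.\ minimality) genuinely does not force the other equalities by inequality juggling --- that implication carries the geometric content of the cycle, which your architecture correctly routes through $(b)$, $(e)$ and $(d)$.
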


\begin{proof}
Recall from Subsection \ref{subsec:invariantsvaluation} that $[\text{vol}(\nu_r)]^{-1}$ equals the last maximal contact value of $\nu_r, \overline{\beta}_{g+1}(\nu_r).$ We use preferably this last notation. We begin by proving the equivalence between (a) and (b). The inclusion $\Delta_\nu(D)\subseteq \mathfrak{T}_D(\nu)$ holds by Proposition \ref{prop:335}. Then,
\begin{equation}\label{eq:ineq_triangle_thm_minval}
\dfrac{D^2}{2}=\text{Area}\left(\Delta_\nu(D)\right)\leq \text{Area}\left(\mathfrak{T}_D(\nu)\right)=\dfrac{\hat{\mu}_D(\nu_r)^2}{2\overline{\beta}_{g+1}(\nu_r)},
\end{equation}
which proves that $\nu_r$ is minimal with respect to $D$ if and only if the inequality in \eqref{eq:ineq_triangle_thm_minval} is an equality if and only if $\Delta_\nu(D)=\mathfrak{T}_D(\nu).$

Let us show that (b) implies (c). By the change of basis provided by the proximity matrix \cite[Lemma 1.1.32]{Alb} and \eqref{noether_formula}, we deduce that $D_{\mu}:=\dd-\hat{\mu}_D(\nu_r) E_r^*=P_{\mu} + N_{\mu}$, where
\begin{align*}
P_{\mu}:=& \dd - \frac{\hat{\mu}_D(\nu_r)}{\overline{\beta}_{g+1}(\nu_r)} \sum_{i=1}^{r}\nu_r(\mathfrak{m}_i) E_i^*, \text{ and}\\
N_{\mu}:=& \frac{\hat{\mu}_D(\nu_r)}{\overline{\beta}_{g+1}(\nu_r)} \sum_{i=1}^{r-1} \nu_r (\varphi_i) E_i.
\end{align*}
Let us show that $P_\mu + N_\mu$ is, in fact, the Zariski decomposition of $D_{\mu}$. It follows from the next three items: (ZD1), (ZD2) and (ZD3).

(ZD1) $N_{\mu}$ is effective by construction and the intersection matrix of its irreducible components, ${E}_1,E_2\ldots , {E}_{r-1}$, is negative definite.

(ZD2) The proximity equalities \eqref{proximity_equalities} yield  the equality $P_{\mu}\cdot {E}_i=0$ for every $i=1,2,\ldots ,$ $ r-1$. Moreover, $P_\mu \cdot E_r>0.$

(ZD3) $P_{\mu}$ is nef. Indeed, reasoning by contradiction, assume the existence of a curve $C$ on $S$ whose strict transform $\tilde{C}$ on $\tilde{S}$ satisfies that $P_{\mu}\cdot \tilde{C}<0$ (notice that, necessarily, $\nu_r(\varphi_C)>0$); this is the unique possibility because of (ZD2) and the fact that  $D$ is ample. Then
$$
0>P_{\mu}\cdot \tilde{C}=\dd\cdot \cc-\dfrac{\hat{\mu}_D(\nu_r)}{\overline{\beta}_{g+1}(\nu_r)}\nu_r(\varphi_C).
$$
Therefore, Lemma \ref{lemm:infimo_seshadri} shows that
\begin{equation}\label{eq:thmdesigmupico}
\hat{\mu}_D(\nu_r) > \frac{\dd \cdot \cc}{\nu_r(\varphi_C)} \cdot \bar{\beta}_{g+1}(\nu_r)\geq \varepsilon(D,\nu_r)\overline{\beta}_{g+1}(\nu_r).
\end{equation}
By Proposition \ref{prop:decZar}, the Zariski decomposition of the divisor
$$
D_{\varepsilon}:= \dd - \varepsilon(D,\nu_r) \bar{\beta}_{g+1}(\nu_r)E_r,
$$
is $D_{\varepsilon}=P_{\varepsilon}+N_{\varepsilon},$ where
\begin{align*}
P_{\varepsilon}:=& \, \dd - \varepsilon(D,\nu_r) \sum_{i=1}^{r}\nu_r(\mathfrak{m}_i) E_i^*,\text{ and }\\
N_{\varepsilon}:=& \, \varepsilon(D,\nu_r)\sum_{i=1}^{r-1} \nu_r (\varphi_i) {E}_i.
\end{align*}
Then, Inequality \eqref{eq:thmdesigmupico} and Corollary \ref{cor:BoundZarCham} show that the $\mathbb{R}$-divisor $D_\varepsilon$ lies on the boundary of a Zariski chamber and thus, by Theorem \ref{pro:KLM} and Remark \ref{remark:thmKLM}, $\Delta_\nu(D)$ would have a vertex different from those in $\mathfrak{T}_D(\nu),$ which is a contradiction. This proves (ZD3) and our first statement in (c).

By Proposition \ref{prop:335} and Lemma \ref{lem:336}(a), the area of $\mathfrak{T}_D(\nu)$ is $\hat{\mu}_D(\nu_r)^2/2\overline{\beta}_{g+1}(\nu_r)$. Equalling the areas of $\Delta_\nu(D)$ and $\mathfrak{T}_D(\nu),$ one gets that  $\hat{\mu}_D(\nu_r)^2=D^2\overline{\beta}_{g+1}(\nu_r)$ and then $P_\mu^2=0,$ which completes the proof of (c).

\medskip

Now we prove that (c) implies (d). Since $P_\mu$ is a nef $\mathbb{R}$-divisor, it holds that
$$
0\leq P_{\mu}\cdot \tilde{C} = \dd\cdot \cc - \frac{\hat{\mu}_D(\nu_r)}{\overline{\beta}_{g+1}(\nu_r)}\nu_r(\varphi_C),
$$
for every integral curve $C$ on $S$ such that $\nu_r(\varphi_C)>0$. Then,
$$
\hat{\mu}_D(\nu_r) \leq \frac{\dd\cdot \cc}{\nu_r(\varphi_C)}\bar{\beta}_{g+1}(\nu_r), \text{ and therefore }\hat{\mu}_D(\nu_r) = \varepsilon(D,\nu_r)\bar{\beta}_{g+1}(\nu_r),
$$
by Lemma \ref{lemm:infimo_seshadri} and Proposition \ref{prop:sDnu_mupico}.

The fact that (d) implies (e) follows from Proposition \ref{prop:decZar}.

Let us prove that (e) implies (f). We can assume that $\hat{\mu}_D(\nu_r)=\varepsilon(D,\nu_r)\overline{\beta}_{g+1}(\nu_r)$ because, otherwise, by Proposition \ref{prop:sDnu_mupico}(a) and Corollary \ref{cor:BoundZarCham}, the segment $\{[D^*-tE_r]\mid 0\leq t\leq \hat{\mu}_D(\nu_r)\}$ would cross at least two Zariski chambers of Big$(\tilde{S})$. Therefore, by Propositions \ref{pro:LM-alphabeta} and \ref{prop:decZar}, with the notation as in Subsection \ref{subsec:Nobodies}, the vertices of the Newton-Okounkov body $\Delta_\nu(D)$ of $D$ are $(0,0),(\varepsilon(D,\nu_r)\overline{\beta}_{g+1}(\nu_r), \varepsilon(D,\nu_r)\nu_r(\varphi_\eta))$ and $(\varepsilon(D,\nu_r)\overline{\beta}_{g+1}(\nu_r),$ $\varepsilon(D,\nu_r)(\nu_r(\varphi_\eta)+1))$ if $p_{r+1}$ is the satellite point $E_\eta \cap E_r$, $\eta \neq r$. Otherwise, the vertices are $(0,0),$ $(\varepsilon(D,\nu_r)\overline{\beta}_{g+1}(\nu_r), 0)$ and $(\varepsilon(D,\nu_r)\overline{\beta}_{g+1}(\nu_r),\varepsilon(D,\nu_r)).$ As a consequence, the area of $\Delta_\nu(D)$ is  $\varepsilon(D,\nu_r)^2\overline{\beta}_{g^*+1}(\nu_r)/2$. This proves that $$\varepsilon(D,\nu_r)=\sqrt{D^2/\overline{\beta}_{g+1}(\nu_r)}$$ because the area of $\Delta_\nu(D)$ is always $D^2/2.$

Only remains to show that (f) implies (a). Reasoning by contradiction, assume that the divisorial valuation $\nu_r$ is not minimal with respect to the divisor $D$. Then, from Proposition \ref{prop:sDnu_mupico}(a) it follows that $\varepsilon(D,\nu_r)\overline{\beta}_{g+1}(\nu_r)<\hat{\mu}_D(\nu_r)$. In addition, by Proposition \ref{prop:sDnu_mupico}(b) one gets
$$
D^2\geq \varepsilon(D,\nu_r)\hat{\mu}_D(\nu_r)>\overline{\beta}_{g+1}(\nu_r)\varepsilon(D,\nu_r)^2
$$
and thus $\varepsilon(D,\nu_r)<\sqrt{\frac{D^2}{\overline{\beta}_{g+1}(\nu_r)}},$ which is a contradiction with (f). This concludes the proof of the theorem.
\end{proof}

\begin{remark}\label{Rem_consThm_minimal}
Theorem \ref{thm:triangulo} proves that a divisorial valuation $\nu_r$ of a smooth projective surface is minimal with respect to an ample divisor $D$ on $S$ if and only if the Seshadri constant $\varepsilon(D,\nu_r)$ is maximal (see Definition \ref{def:seshadri_maximal}). In addition, when $\nu_r$ is minimal with respect to $D,$ the nef $\mathbb{R}$-divisor $P_{\varepsilon}=P_\mu$ lies on the boundaries of the nef and pseudoeffective cones of $\tilde{S}$. Finally, the inequality in Proposition \ref{prop:sDnu_mupico}(b) is an equality since it holds that $\hat{\mu}_D(\nu_r)=\sqrt{D^2\overline{\beta}_{g+1}(\nu_r)}$ and $\varepsilon(D,\nu_r)=\sqrt{D^2/\overline{\beta}_{g+1}(\nu_r)}.$
\end{remark}

The valuative Nagata conjecture \cite[Conjecture 4.5]{GalMonMoy} involves divisorial and irrational plane valuations and implies the original Nagata conjecture. For divisorial valuations,
it states that, if $\nu_r$ is a very general divisorial  valuation of $\mathbb{P}^2$ such that $[{\rm vol}^N(\nu)]^{-1}\geq 9$, then $\nu_r$ is minimal (with respect to a general line of $\mathbb{P}^2$). Taking, in Theorem \ref{thm:triangulo}, $S$ as the projective plane $\mathbb{P}^2$ and $D$ as a general line of $\mathbb{P}^2$, we obtain several different formulations of this conjecture. Some of them are natural valuative analogs of known reformulations of the Nagata conjecture. This fact is stated in the following corollary (Conjecture \ref{ConjectureP2} in the introduction).

\begin{corollary}\label{cor_val_Nag_conj}
The valuative Nagata conjecture for divisorial valuations admits  the following equivalent statement: if $\nu_r$ is a very general divisorial valuation of $S=\mathbb{P}^2$ such that $[{\rm vol}^N(\nu_r)]^{-1}\geq 9,$ then all the equivalent statements in Theorem \ref{thm:triangulo} hold for any exceptional curve valuation $\nu$ whose first component is $\nu_r$ and a general projective line $D$ of $\mathbb{P}^2$.
\end{corollary}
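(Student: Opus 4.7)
The plan is to derive the corollary as an essentially immediate consequence of Theorem \ref{thm:triangulo} specialized to the data $S=\mathbb{P}^2$ and $D=L$, a general projective line. First I would recall that the valuative Nagata conjecture for divisorial valuations, as stated in \cite{GalMonMoy}, asserts that under the hypothesis that $\nu_r$ is a very general divisorial valuation of $\mathbb{P}^2$ with $[\text{vol}^N(\nu_r)]^{-1}\geq 9$, the valuation $\nu_r$ is minimal. By Definition \ref{def:minimalvaluation}, this minimality is characterized equivalently either by $\hat{\mu}_L^N(\nu_r)=\sqrt{1/[\text{vol}^N(\nu_r)]^{-1}}$ or by $\hat{\mu}_L(\nu_r)=\sqrt{1/[\text{vol}(\nu_r)]^{-1}}$, after using $L^2=\text{vol}_{\mathbb{P}^2}(L)=1$. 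Hence the conclusion of the conjecture coincides exactly with statement (a) of Theorem \ref{thm:triangulo} for the pair $(S,D)=(\mathbb{P}^2,L)$.

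Next I would verify that all the hypotheses of Theorem \ref{thm:triangulo} are in place: a general line on $\mathbb{P}^2$ is ample, the valuation $\nu_r$ is by assumption a divisorial valuation of $\mathbb{P}^2$, and any exceptional curve valuation $\nu$ whose first component is $\nu_r$ is an admissible choice for that theorem. Since the statements (a)--(f) of Theorem \ref{thm:triangulo} are equivalent in this setting, replacing the conclusion ``$\nu_r$ is minimal'' in the original conjecture by any one of (b)--(f) yields a logically equivalent formulation of the valuative Nagata conjecture. Observe that statements (a), (c), (d), (e), (f) do not depend on the choice of the exceptional curve valuation $\nu$ (only on $\nu_r$), whereas statement (b), which does involve $\nu$, holds for every such $\nu$ by applying Theorem \ref{thm:triangulo} to each of them.

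The argument contains no serious obstacle; the only minor verification is the compatibility between the original formulation of the conjecture in terms of the normalized volume and the formulation in terms of $\text{vol}(\nu_r)$, which is precisely recorded in Definition \ref{def:minimalvaluation}, together with the computation $\text{vol}_{\mathbb{P}^2}(L)=1$. With these identifications, the corollary follows directly from Theorem \ref{thm:triangulo}, and no further argument is required.
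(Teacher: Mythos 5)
Your proposal is correct and follows essentially the same route as the paper, which also obtains the corollary by specializing Theorem \ref{thm:triangulo} to $S=\mathbb{P}^2$ and $D=L$ and identifying statement (a) with the conclusion of the valuative Nagata conjecture via Definition \ref{def:minimalvaluation} and $L^2=1$. Your additional remark that statement (b) is the only one depending on the choice of $\nu$, and that it holds for every such $\nu$ by applying the theorem to each flag, is a useful clarification that the paper leaves implicit.
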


As a consequence of Corollary \ref{cor_val_Nag_conj}, to prove the valuative Nagata conjecture for divisorial valuations, it would suffice to assume that $\nu_r$ is a very general divisorial valuation of $\mathbb{P}^2$ such that $[{\rm vol}^N(\nu_r)]^{-1}\geq 9$ and prove any of the statements given in Theorem \ref{thm:triangulo}.\\

Let $S$ be a smooth projective surface and $p$ a point in $S$. Denote by $\varepsilon(D,p)$ the Seshadri constant of $D$ at $p$,   cf. \cite[Definition 5.1.1]{Laz1}. Considering $r=1$ in Theorem \ref{thm:triangulo}, Item (d) states that
\begin{equation}\label{eq:mupico_seshadri}
\hat{\mu}_D(\nu_r)=\varepsilon(D;p),
\end{equation}
and thus the remaining items are equivalent to the previous equality. The fact that Equality \eqref{eq:mupico_seshadri} holds when $\varepsilon(D;p)$ is irrational was noticed in \cite[Remark 2.1]{DumKurMacSze} and, as we will prove in our next result, $\varepsilon(D,\nu_r)$ could be irrational only when $\nu_r$ is minimal with respect to $D.$ Thus, Item (d) in Theorem \ref{thm:triangulo} is the natural extension of Remark 2.1 in \cite{DumKurMacSze}.

\medskip

Our next result (Theorem \ref{Intro_prop:submaximal_curve} in the introduction) studies submaximal curves in the valuative context (see the forthcoming Definition  \ref{def:submaximalcurves}).

\begin{theorem}\label{prop:submaximal_curve}
Let $D$ be an ample divisor on a smooth projective surface $S$ and $\nu_r$ a  divisorial valuation of $S$. If $\nu_r$ is not minimal with respect to $D$, then there exists an integral curve $C$ on $S$ with $\nu_r(\varphi_{C})>0$ such that
$$
\dfrac{D\cdot C}{\nu_r(\varphi_{C})}=\varepsilon(D,\nu_r)<\sqrt{\frac{D^2}{[\text{\emph{vol}}(\nu_r)]^{-1}}}.
$$
Moreover, if there exists an integral curve $C$ on $S$ such that $\nu_r(\varphi_{C})>0$ and $$ \dfrac{D\cdot C}{\nu_r(\varphi_C)}<\sqrt{\frac{D^2}{[\text{\emph{vol}}(\nu_r)]^{-1}}},$$ then the valuation $\nu_r$ is not minimal with respect to $D$.
\end{theorem}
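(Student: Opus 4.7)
The plan is to handle the ``moreover'' statement first, since it is immediate: if $C$ is an integral curve with $\nu_r(\varphi_C) > 0$ satisfying the stated hypothesis, then Lemma \ref{lemm:infimo_seshadri} gives $\varepsilon(D,\nu_r) \leq D \cdot C/\nu_r(\varphi_C) < \sqrt{D^2/[\text{vol}(\nu_r)]^{-1}}$, and the equivalence (a)$\Leftrightarrow$(f) in Theorem \ref{thm:triangulo} forces $\nu_r$ to be non-minimal with respect to $D$.

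For the first assertion, I would argue through the Zariski chamber structure of $\tilde{S}$. Assuming $\nu_r$ is not minimal, Theorem \ref{thm:triangulo} (via the negations of (d) and (f)) yields the strict inequalities $\varepsilon(D,\nu_r) \,\overline{\beta}_{g+1}(\nu_r) < \hat{\mu}_D(\nu_r)$ and $\varepsilon(D,\nu_r) < \sqrt{D^2/[\text{vol}(\nu_r)]^{-1}}$, so the second of these already supplies the strict bound demanded in the statement. Setting $t_0 := \varepsilon(D,\nu_r)\,\overline{\beta}_{g+1}(\nu_r)$, Corollary \ref{cor:BoundZarCham} ensures that $P_\varepsilon := D^* - \varepsilon(D,\nu_r) \sum_{i=1}^r \nu_r(\mathfrak{m}_i) E_i^*$ is big and nef and that the class $[D^* - t_0 E_r]$ lies on the boundary of a Zariski chamber of $\text{Big}(\tilde{S})$. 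Moreover, by the second half of Proposition \ref{prop:decZar}, this boundary is genuinely crossed as $t$ increases past $t_0$, since the formula for $P_t$ there stops describing a nef $\mathbb{R}$-divisor for $t > t_0$.

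The key step is to extract a concrete curve from this chamber transition. By Proposition \ref{pro:KLM} together with Remark \ref{remark:thmKLM}, at $t = t_0$ at least one new irreducible curve $\tilde{C}$ on $\tilde{S}$ must enter the support of the negative part of the Zariski decomposition of $D^* - tE_r$; such a $\tilde{C}$ necessarily satisfies $P_{t_0} \cdot \tilde{C} = 0$ and $\tilde{C}^2 < 0$. The main obstacle is verifying that $\tilde{C}$ is the strict transform of a genuine curve on $S$ rather than an exceptional divisor. I would exclude $\tilde{C} \in \{E_1,\ldots,E_{r-1}\}$ on the grounds that these divisors already lie in $\supp(N_t)$ for every $t > 0$, and I would exclude $\tilde{C} = E_r$ by the sign computation in the proof of Proposition \ref{prop:decZar}, which shows $P_{t_0} \cdot E_r > 0$, so $E_r$ cannot enter the negative part near $t_0$.

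Once $\tilde{C}$ is identified with the strict transform of an integral curve $C$ on $S$, I would expand $P_{t_0} \cdot \tilde{C} = 0$ using Noether's formula \eqref{noether_formula} together with the identity $\sum_i \nu_r(\mathfrak{m}_i) E_i^* = \sum_i \nu_r(\varphi_i) E_i$ employed in Proposition \ref{prop:decZar}; this yields precisely $D \cdot C = \varepsilon(D,\nu_r) \nu_r(\varphi_C)$, i.e.\ $D \cdot C / \nu_r(\varphi_C) = \varepsilon(D,\nu_r)$. The condition $\nu_r(\varphi_C) > 0$ comes for free, since $\nu_r(\varphi_C) = 0$ would force $P_{t_0} \cdot \tilde{C} = D \cdot C > 0$, contradicting the ampleness of $D$.
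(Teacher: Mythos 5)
Your proposal is correct and takes essentially the same route as the paper: non-minimality gives $\varepsilon(D,\nu_r)\,\overline{\beta}_{g+1}(\nu_r)<\hat{\mu}_D(\nu_r)$, the class $[D^*-\varepsilon(D,\nu_r)\overline{\beta}_{g+1}(\nu_r)E_r]$ lies on a Zariski chamber wall, a new component of the negative part is extracted there and Noether's formula turns $P_{\varepsilon}\cdot\tilde{C}=0$ into the stated equality. The only cosmetic differences are that the paper invokes \cite[Proposition 1.7]{BauKurSze} to produce the curve where you use Proposition \ref{pro:KLM} and Remark \ref{remark:thmKLM} together with the explicit exclusion of the exceptional components, and that you prove the ``moreover'' part directly from Lemma \ref{lemm:infimo_seshadri} instead of the paper's contradiction with the nefness of $P_{\varepsilon}$.
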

\begin{proof}
Recall that $[\text{vol}(\nu_r)]^{-1}=\overline{\beta}_{g+1}(\nu_r)$. We start by showing the first equality. By Proposition \ref{prop:sDnu_mupico}, if the divisorial valuation $\nu_r$ is not minimal with respect to $D$ then $\varepsilon(D,\nu_r)\overline{\beta}_{g+1}(\nu_r)<\hat{\mu}_D(\nu_r)$. Consequently, by Corollary \ref{cor:BoundZarCham}, the segment $\{[D^*-tE_r]\mid 0\leq t\leq \hat{\mu}_D(\nu_r)\}\subseteq {\rm Num}_{\mathbb{R}}(\tilde{S})$, $\tilde{S}$ being the surface defined by $\nu_r$,  crosses at least two Zariski chambers of  Big$(\tilde{S})$. Moreover, by \cite[Proposition 1.7]{BauKurSze}, there exists a curve $C$ on $S$ such that its strict transform $\tilde{C}$ on $\tilde{S}$ satisfies
$$
0=P_{\varepsilon}\cdot \tilde{C}=\left(D^*-\varepsilon(D,\nu_r)\sum_{i=1}^r\nu_r(\mathfrak{m}_i)E_i^*\right)\cdot \tilde{C},
$$
where $P_{\varepsilon}$ is the divisor defined in Corollary \ref{cor:BoundZarCham}. As $D$ is an ample divisor on $S$, the above equalities imply that $\nu_r(\varphi_{C})>0$ and then
$$
\varepsilon(D,\nu_r)=\dfrac{D\cdot C}{\nu_r(\varphi_{C})},
$$
which  proves the first statement, since the inequality holds by Remark \ref{rem:Bounds_Sdnu} and Item (f) in Theorem \ref{thm:triangulo}.

To finish our proof, let us prove our second statement. Reasoning by contradiction, assume that $\nu_r$ is a minimal valuation with respect to $D$. By Theorem \ref{thm:triangulo}, it holds that
$$
0>D\cdot C-\nu_r(\varphi_C)\sqrt{\dfrac{D^2}{\overline{\beta}_{g+1}(\nu_r)}}=D\cdot C - \varepsilon(D,\nu_r)\nu_r(\varphi_C)=P_{\varepsilon}\cdot C,
$$
which is a contradiction since, by Proposition \ref{prop:decZar}, $P_{\varepsilon}$ is a nef $\mathbb{R}$-divisor.
\end{proof}

\begin{remark}
Assume that $\nu_r$ is not minimal with respect to $D$. By the proof of the above result, the strict transform $\tilde{C}$ of any integral curve $C$ satisfying
$$
\varepsilon(D,\nu_r)=\dfrac{D\cdot C}{\nu_r(\varphi_{C})}
$$
has negative self-intersection (because $P_{\varepsilon}\cdot \tilde{C}=0$ and $P_{\varepsilon}$ is a big and nef $\mathbb{R}$-divisor). So, its class modulo numerical equivalence generates an extremal ray of the cone ${\text{NE}}(\tilde{S})$.
\end{remark}

\begin{definition}\label{def:submaximalcurves}
Let $D$ be an ample divisor on a smooth projective surface $S$ and $\nu_r$ a divisorial valuation of $S$. An integral curve $C$ on $S$ is said to be \emph{submaximal (with respect to $D$ and $\nu_r$)} whenever $\nu_r(\varphi_C)>0$ and
$$
\dfrac{D\cdot C}{\nu_r(\varphi_C)}<\sqrt{\frac{D^2}{\overline{\beta}_{g+1}(\nu_r)}}.
$$
\end{definition}
\begin{remark}\label{remark:submaximal_onlynonminimal}
Lemma \ref{lemm:infimo_seshadri} and Theorems \ref{thm:triangulo} and \ref{prop:submaximal_curve} prove that only non-minimal divisorial valuations admit submaximal curves, since $$\varepsilon(D,\nu_r)=\sqrt{\frac{D^2}{\overline{\beta}_{g+1}(\nu_r)}}$$ if and only if $\nu_r$ is minimal with respect to $D.$
\end{remark}

When $S=\mathbb{P}^2, D=L$ is a general projective line and $\nu_r$ a non-minimal valuation, an integral curve $C$ on $\mathbb{P}^2$ such that $\nu_r(\varphi_C)=\deg(C)\hat{\mu}(\nu_r)$ is named  \emph{supraminimal} (see \cite[Section 5]{DumHarKurRoeSze} and \cite[Definition 3.11]{GalMonMoyNic2}).

Our next results consider the above context. Firstly, we prove that, in this case, suprami\-nimal and submaximal curve are the same thing. Notice that, when we consider another setting, for instance when $S$ is a Hirzebruch surface and $D$ an ample divisor on $S$, the above statement is not true; in fact, the curve which reaches $\hat{\mu}_D(\nu_r)$ can be non-integral (see \cite{GalMonMor2}).

\begin{proposition}\label{prop:submax_suprama}
Let $S=\mathbb{P}^2$ be the projective plane, $L$ a general projective line on $\mathbb{P}^2$ and $\nu_r$ a divisorial valuation of $\mathbb{P}^2$. Assume that $\nu_r$ is not minimal. An integral  curve $C$ on $\mathbb{P}^2$ is supraminimal if and only if it is submaximal. In particular, there exists a unique submaximal curve (with respect to $\nu_r$ and $L$).
\end{proposition}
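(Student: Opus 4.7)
The plan is to prove the ``if and only if'' in two separate directions and to deduce uniqueness of the submaximal curve as an immediate consequence of the equivalence together with the uniqueness of the supraminimal one. For the easy direction, supraminimal $\Rightarrow$ submaximal, I would observe that the non-minimality of $\nu_r$ forces the bound \eqref{Eq_desigualdadmupico} to be strict, so $\hat{\mu}(\nu_r) > \sqrt{\overline{\beta}_{g+1}(\nu_r)}$, and a supraminimal $C$ then satisfies
\[
\frac{\deg(C)}{\nu_r(\varphi_C)} = \frac{1}{\hat{\mu}(\nu_r)} < \frac{1}{\sqrt{\overline{\beta}_{g+1}(\nu_r)}} = \sqrt{\frac{L^2}{\overline{\beta}_{g+1}(\nu_r)}},
\]
which is exactly the submaximality condition.

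The converse, submaximal $\Rightarrow$ supraminimal, is the core of the proof. My first step is to establish the $\mathbb{P}^2$-specific identity $\varepsilon(L,\nu_r) = 1/\hat{\mu}(\nu_r)$: the lower bound $\varepsilon(L,\nu_r) \geq 1/\hat{\mu}(\nu_r)$ comes from the definition of $\hat{\mu}$, which yields $\nu_r(\varphi_C) \leq \deg(C)\hat{\mu}(\nu_r)$ for every integral curve $C$, combined with Lemma \ref{lemm:infimo_seshadri}; the reverse inequality is Proposition \ref{prop:sDnu_mupico}(b) using $L^2 = 1$. Theorem \ref{prop:submaximal_curve} then produces an integral curve $C_0$ with $\deg(C_0)/\nu_r(\varphi_{C_0}) = \varepsilon(L,\nu_r) = 1/\hat{\mu}(\nu_r)$, equivalently $\nu_r(\varphi_{C_0}) = \deg(C_0)\hat{\mu}(\nu_r)$, so $C_0$ is supraminimal; Proposition \ref{prop:numsubmaximalcurves} combined with $\rho(\mathbb{P}^2) = 1$ ensures that $C_0$ is the unique integral curve computing $\varepsilon(L,\nu_r)$.

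To conclude that every submaximal integral $C$ coincides with $C_0$ (and hence is supraminimal), I would argue by contradiction. Assuming $C \neq C_0$, Cauchy--Schwarz applied to $\nu_r(\varphi_C) = \sum_i \nu_r(\mathfrak{m}_i)\mult_{p_i}(\varphi_C)$ together with the submaximality of $C$ forces $\tilde{C}^2 < 0$ on $\widetilde{\mathbb{P}^2}$; the uniqueness just established gives $P_\varepsilon \cdot \tilde{C} > 0$ while $P_\varepsilon \cdot \tilde{C_0} = 0$. Then by Proposition \ref{pro:KLM} and Remark \ref{remark:thmKLM}, $\tilde{C}$ would appear as an additional irreducible negative component of the Zariski decomposition of $L^* - tE_r$ for some $t > \varepsilon(L,\nu_r)\overline{\beta}_{g+1}(\nu_r)$, producing a further Zariski chamber crossing on the segment $\{[L^* - tE_r] \mid 0 \leq t \leq \hat{\mu}(\nu_r)\}$ beyond the one at $t = \varepsilon(L,\nu_r)\overline{\beta}_{g+1}(\nu_r)$, and contradicting Corollary \ref{cor:boundnumberZarCha_submaximalcurves} which bounds that number by $2 + \rho(\mathbb{P}^2) - 1 = 2$. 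Hence $C = C_0$, $C$ is supraminimal, and uniqueness of the submaximal curve follows at once. I expect the main obstacle to be precisely this rigidity step: showing rigorously that a hypothetical second submaximal curve must interact with the segment and create an extra chamber transition, an argument for which the low Picard rank of $\mathbb{P}^2$ is indispensable.
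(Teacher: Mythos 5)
Your first direction (supraminimal $\Rightarrow$ submaximal) is exactly the paper's argument, and your preliminary steps for the converse are sound and even somewhat more self-contained than the paper's: the identity $\varepsilon(L,\nu_r)=1/\hat{\mu}(\nu_r)$ follows, as you say, from $\nu_r(\varphi_C)\leq \deg(C)\hat{\mu}(\nu_r)$ together with Lemma \ref{lemm:infimo_seshadri} and Proposition \ref{prop:sDnu_mupico}(b) (the paper only records this afterwards, in Corollary \ref{cor:epsilon_propertyP2}(b), as a consequence of the proposition you are proving, so your independent derivation is legitimate), and Theorem \ref{prop:submaximal_curve} plus Proposition \ref{prop:numsubmaximalcurves} with $\rho(\mathbb{P}^2)=1$ do give a unique integral curve $C_0$ computing $\varepsilon(L,\nu_r)$, which is then supraminimal. (Using Proposition \ref{prop:numsubmaximalcurves} and Corollary \ref{cor:boundnumberZarCha_submaximalcurves}, which appear after this proposition in the paper, is not circular, since their proofs do not use it.)

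The genuine gap is the step you yourself flag: you must show that \emph{every} submaximal integral curve $C$ computes $\varepsilon(L,\nu_r)$ (equivalently $C=C_0$), and your mechanism for this does not follow from the results you cite. From $C\neq C_0$ you correctly get $\tilde{C}^2<0$ and $P_{\varepsilon}\cdot\tilde{C}>0$, but nothing in Proposition \ref{pro:KLM} or Remark \ref{remark:thmKLM} forces $\tilde{C}$ to enter the negative part of $L^*-tE_r$ for some $t\leq \hat{\mu}(\nu_r)$: membership in $\mathrm{Neg}(D_t)$ is not implied by submaximality, and indeed $D_t\cdot\tilde{C}=\deg(C)-t\,\mathrm{mult}_{p_r}(\varphi_C)$, which stays positive for all $t$ whenever the strict transform of $C$ misses $p_r$ (submaximality only requires $\mathrm{mult}_{p_1}(\varphi_C)>0$), so no extra Zariski chamber crossing is produced and the contradiction with the bound $2+\rho(\mathbb{P}^2)-1=2$ never materializes. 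The paper closes precisely this implication by quoting an external result, \cite[Lemma 3.10]{GalMonMoyNic2}, which states directly that an integral curve with $\nu_r(\varphi_C)>\sqrt{\overline{\beta}_{g+1}(\nu_r)}\,\deg(C)$ is supraminimal; without that lemma (or an argument replacing it, e.g.\ the signature/uniqueness analysis of negative classes behind it), your proof of the converse direction is incomplete.
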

\begin{proof}
If $C$ is supraminimal, then
$$
\sqrt{\dfrac{L^2}{\overline{\beta}_{g+1}(\nu_r)}}=\dfrac{1}{\sqrt{\overline{\beta}_{g+1}(\nu_r)}}>\dfrac{\deg(C)}{\nu_r(\varphi_C)}=\dfrac{L\cdot C}{\nu_r(\varphi_C)},
$$
and therefore $C$ is submaximal.

Conversely, if $C$ is a submaximal curve, then $\nu_r(\varphi_C)>\sqrt{\overline{\beta}_{g+1}(\nu_r)}\deg(C)$ and, by \cite[Lemma 3.10]{GalMonMoyNic2}, $C$ is supraminimal.

The last statement in the proposition follows from Theorem \ref{prop:submaximal_curve} and the uniqueness of the supraminimal curve \cite{DumHarKurRoeSze,GalMonMoyNic2}.
\end{proof}

\begin{corollary}\label{cor:epsilon_propertyP2}
Let $\nu_r$ be a divisorial valuation of $\mathbb{P}^2$ and $L$ a general projective line on $\mathbb{P}^2$. Denote by $\nu_r^N$ the normalization of $\nu_r$. Then the following conditions hold:
\begin{itemize}
\item[(a)] The valuative Nagata conjecture for divisorial valuations of $\mathbb{P}^2$ can be equivalently stated as: if $\nu_r$ is a very general divisorial valuation of $\mathbb{P}^2$ such that $[{\rm vol}^N(\nu_r)]^{-1}\geq 9,$ then there is no submaximal curve on $\mathbb{P}^2$ with respect to $L$ and $\nu_r$.
\item[(b)] It holds that $$ \hat{\mu}_L(\nu_r)\,\varepsilon(L,\nu_r)=1. $$
\item[(c)] Let $\Gamma$ be the dual graph of a divisorial valuation. Then, the largest value in the set
$$\Big\{\varepsilon(L,\nu_r^N) \ | \ \Gamma \text{ is the dual graph of }\nu_r \Big\}$$
is reached for very general valuations $\nu_r.$
\item[(d)] Set $t(\nu_r)=1$ and $\delta_0(\nu_r)=-1$ if $r=1$. Otherwise, $t(\nu_r):=\nu_r(\varphi_H),$ $H$ being the projective line such that $\nu_r(\varphi_H)>\overline{\beta}_0,$ and define
$$
\delta_0(\nu_r):=\Bigg\lceil\dfrac{\overline{\beta}_{g+1}(\nu_r)-2\overline{\beta}_0(\nu_r)t(\nu_r)}{t(\nu_r)^2}\Bigg\rceil^+,
$$
where $\lceil x\rceil^+$ is defined as the ceiling of a rational number $x$ if $x\geq 0$, and $0$ otherwise. Then,
$$
\varepsilon(L,\nu_r)\geq \dfrac{1}{\overline{\beta}_0(\nu_r)+ t(\nu_r)(1+\delta_0(\nu_r))}.
$$
\end{itemize}
\end{corollary}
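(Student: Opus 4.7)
My plan treats (a)--(c) as quick consequences of the results already established and concentrates effort on (d), which is the only substantive new input.

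For (a), I would combine Corollary \ref{cor_val_Nag_conj} with Remark \ref{remark:submaximal_onlynonminimal}: the former identifies the valuative Nagata conjecture with the minimality of $\nu_r$ with respect to $L$, and the latter (together with the uniqueness of the submaximal curve furnished by Proposition \ref{prop:submax_suprama}) reformulates minimality as the non-existence of a submaximal curve. For (b), I would split into two cases. If $\nu_r$ is minimal, Theorem \ref{thm:triangulo}(f) and Proposition \ref{prop:sDnu_mupico}(a) give $\varepsilon(L,\nu_r)=1/\sqrt{\overline{\beta}_{g+1}(\nu_r)}$ and $\hat{\mu}_L(\nu_r)=\sqrt{\overline{\beta}_{g+1}(\nu_r)}$, whose product is $1$. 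If $\nu_r$ is not minimal, Proposition \ref{prop:submax_suprama} produces a unique integral curve $C\subset\mathbb{P}^2$ which is both supraminimal (so $\nu_r(\varphi_C)=\deg(C)\,\hat{\mu}_L(\nu_r)$) and submaximal; Theorem \ref{prop:submaximal_curve} forces $\varepsilon(L,\nu_r)=\deg(C)/\nu_r(\varphi_C)$, and multiplying the two identities yields the claim. For (c), part (b) reduces the statement to showing that $\hat{\mu}_L(\nu_r^N)$ attains its minimum over the stratum of valuations with fixed dual graph $\Gamma$ at very general valuations. This follows by a standard semicontinuity argument: for each $c>\inf\hat{\mu}_L(\nu_r^N)$ the locus where $\hat{\mu}_L(\nu_r^N)\ge c$ is a countable union of proper Zariski closed subsets of the parameter space of configurations with dual graph $\Gamma$ (one for each pair $(m,f)$ witnessing $\nu_r^N(f)\ge cm$), and its complement contains precisely the very general valuations.

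The main effort lies in (d). By (b), the target inequality $\varepsilon(L,\nu_r)\ge 1/M$ with $M:=\overline{\beta}_0(\nu_r)+t(\nu_r)\bigl(1+\delta_0(\nu_r)\bigr)$ is equivalent to $\hat{\mu}_L(\nu_r)\le M$, and by Lemma \ref{lemm:infimo_seshadri} together with Proposition \ref{prop:mupico_geometry} this amounts to proving
\[
\nu_r(\varphi_C)\le M\,\deg(C)
\]
for every integral curve $C\subset\mathbb{P}^2$ with $\nu_r(\varphi_C)>0$. The case $r=1$ is immediate, and for $C=H$ the bound reduces to $t\le M$. For $C\ne H$, B\'ezout on $\widetilde{\mathbb{P}^2}$ in the total-transform basis yields $\widetilde{C}\cdot\widetilde{H}=\deg(C)-\sum_i m_i h_i\ge 0$, where $m_i:=\mathrm{mult}_{p_i}(\varphi_C)$ and $h_i:=\mathrm{mult}_{p_i}(\varphi_H)\in\{0,1\}$. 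I would then split the Noether identity
\[
\nu_r(\varphi_C)=\sum_{h_i=1}m_i\,\nu_r(\mathfrak{m}_i)+\sum_{h_i=0}m_i\,\nu_r(\mathfrak{m}_i),
\]
bound the first sum by $\overline{\beta}_0\deg(C)$ using $\nu_r(\mathfrak{m}_i)\le\overline{\beta}_0$ together with the B\'ezout estimate $\sum_{h_i=1}m_i\le\deg(C)$, and bound the second sum by $t(1+\delta_0)\deg(C)$ via Cauchy--Schwarz combined with the identities $\sum_i\nu_r(\mathfrak{m}_i)^2=\overline{\beta}_{g+1}$ and $\sum_i h_i\nu_r(\mathfrak{m}_i)=t$. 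The ceiling defining $\delta_0$ is tailored so that the integer inequality $t^2\delta_0+2\overline{\beta}_0 t\ge\overline{\beta}_{g+1}$ is precisely what closes the estimate.

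The main obstacle is the second-piece bound: a bare Cauchy--Schwarz produces $\sqrt{\overline{\beta}_{g+1}}\deg(C)$, which in general exceeds $t(1+\delta_0)\deg(C)$. Bridging this gap requires crucially that $H$ is the \emph{unique} line with $\nu_r(\varphi_H)>\overline{\beta}_0$, which localises the indices $i$ with $h_i=0$ at the tail of the configuration and constrains their joint contribution through the proximity relations. The cleanest realisation I envisage bypasses the combinatorial manipulation altogether by exhibiting an explicit nef $\mathbb{R}$-divisor $A=\alpha L^*+\beta\widetilde{H}-\gamma E_r^*$ on $\widetilde{\mathbb{P}^2}$ whose coefficients are chosen so that $A\cdot(L^*-ME_r^*)<0$; by Proposition \ref{prop:mupico_geometry} such a nef witness forces $L^*-ME_r^*$ to be non-big and hence $\hat{\mu}_L(\nu_r)\le M$. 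This is the route I would attempt first, keeping the direct split-Noether calculation as a back-up.
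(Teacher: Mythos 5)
Your treatment of (a) and (b) is correct and essentially identical to the paper's: (a) comes from Remark \ref{remark:submaximal_onlynonminimal} (with Corollary \ref{cor_val_Nag_conj}), and (b) splits into the minimal case (Theorem \ref{thm:triangulo}) and the non-minimal case, where the unique supraminimal$=$submaximal curve of Proposition \ref{prop:submax_suprama} computes $\varepsilon(L,\nu_r)$ by Theorem \ref{prop:submaximal_curve}, forcing $\hat{\mu}_L(\nu_r)\varepsilon(L,\nu_r)=1$. For (c) the paper simply invokes \cite[Corollary 4.3]{GalMonMoy}; your semicontinuity argument re-derives that citation (the loci $\{\hat{\mu}_L(\nu_r^N)\geq a/m\}$ are countable unions of closed subsets, proper once $a/m$ exceeds the infimum), which is acceptable provided you also note that the parameter space of configurations with fixed dual graph $\Gamma$ is irreducible, so that ``very general'' makes sense.

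Part (d) is where your proposal has a genuine gap. The paper obtains (d) immediately from (b) together with the external bound $\hat{\mu}_L(\nu_r)\leq \overline{\beta}_0(\nu_r)+t(\nu_r)(1+\delta_0(\nu_r))$ of \cite[Corollary 3.5]{GalMonMorPer}; you instead set out to prove this bound from scratch, and neither of your two routes closes. You yourself concede that the split-Noether/Cauchy--Schwarz estimate yields only $\sqrt{\overline{\beta}_{g+1}(\nu_r)}\deg(C)$ for the second sum, which is not the required $t(\nu_r)(1+\delta_0(\nu_r))\deg(C)$, and you do not supply the extra combinatorial input (from uniqueness of $H$ and the proximity relations) that would bridge the difference. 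The fallback ``nef witness'' $A=\alpha L^*+\beta\widetilde{H}-\gamma E_r^*$ is only named, never constructed: no coefficients are proposed, and nefness of such a class is highly non-trivial --- $\widetilde{H}$ has negative self-intersection on $\widetilde{\mathbb{P}^2}$, so one must verify $A\cdot\widetilde{H}\geq 0$, $A\cdot E_i\geq 0$ for all $i$, and $A\cdot\widetilde{C}\geq 0$ for every other integral curve $C$, and this last condition is precisely the kind of global positivity statement the whole corollary is trying to establish, so the argument risks circularity. As it stands, (d) is asserted but not proved; either carry out the cited result's argument in full or cite it as the paper does.
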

\begin{proof}
Item (a) follows from Remark \ref{remark:submaximal_onlynonminimal}. Let us prove Item (b). Firstly assume that $\nu_r$ is a minimal valuation with respect to $L$. By Theorem \ref{thm:triangulo}, one has that
$$
\hat{\mu}_L(\nu_r)=\sqrt{\overline{\beta}_{g+1}(\nu_r)} \text{ and } \varepsilon(L,\nu_r)=\sqrt{1/\overline{\beta}_{g+1}(\nu_r)}
$$
and then $\hat{\mu}_L(\nu_r)\,\varepsilon(L,\nu_r)=1.$ Otherwise, by Proposition \ref{prop:submax_suprama}, it holds that
$$
\hat{\mu}_L(\nu_r)=\frac{1}{\varepsilon(L,\nu_r)},
$$
which proves Item (b). Finally, Item (c) (respectively, Item (d)) follows from Item (b) and \cite[Corollary 4.3]{GalMonMoy} (respectively, Item (b) and \cite[Corollary 3.5]{GalMonMorPer}).
\end{proof}

We conclude this article by adding some information about submaximal curves on smooth projective surfaces $S$ and also about  infinitesimal Newton-Okounkov bodies of ample divisors on $S$ with respect to exceptional curve valuations.

The following result can be regarded as an infinitesimal version of \cite[Proposition 2.7]{StrSze} and its proof.

\begin{proposition}\label{prop:numsubmaximalcurves}
Let $D$ be an ample divisor on $S$ and $\nu_r$ a divisorial valuation of $S$. Set $\rho(S)$ the Picard number of $S$ and $\tilde{S}$ the surface defined by $\nu_r$.  If $\nu_r$ is not minimal with respect to $D$, then there exist at most $\rho(S)$ submaximal curves on $S$ computing $\varepsilon(D,\nu_r)$.
\end{proposition}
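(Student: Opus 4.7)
The plan is to exhibit $n+r-1$ linearly independent classes inside a single negative definite subspace of $\mathrm{Num}_{\mathbb{R}}(\tilde{S})$ of dimension $\rho(S)+r-1$. Since $\nu_r$ is not minimal with respect to $D$, Corollary \ref{cor:BoundZarCham} gives that
$$
P_\varepsilon := D^* - \varepsilon(D,\nu_r)\sum_{i=1}^r \nu_r(\mathfrak{m}_i)\,E_i^*
$$
is a big and nef $\mathbb{R}$-divisor on $\tilde{S}$ with $P_\varepsilon^2>0$. The Hodge index theorem therefore yields that $\{P_\varepsilon\}^\perp\subset \mathrm{Num}_{\mathbb{R}}(\tilde{S})$ is negative definite and has dimension $\rho(\tilde{S})-1=\rho(S)+r-1$.

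I would then check that the relevant classes lie in $\{P_\varepsilon\}^\perp$. For each submaximal curve $C_i$ computing $\varepsilon(D,\nu_r)$, the projection formula together with the Noether formula \eqref{noether_formula} yields
$$
\tilde{C}_i\cdot P_\varepsilon = D\cdot C_i - \varepsilon(D,\nu_r)\,\nu_r(\varphi_{C_i}) = 0,
$$
so $[\tilde{C}_i]\in\{P_\varepsilon\}^\perp$ for every $i=1,\ldots,n$. Moreover, the proximity equalities \eqref{proximity_equalities}, as used in the proof of Proposition \ref{prop:decZar}, give $P_\varepsilon\cdot E_j=0$ for each $1\leq j\leq r-1$, hence $[E_1],\ldots,[E_{r-1}]\in\{P_\varepsilon\}^\perp$ as well.

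The main step, and the one I expect to require the most care, is to show that the $n+r-1$ classes $[\tilde{C}_1],\ldots,[\tilde{C}_n],[E_1],\ldots,[E_{r-1}]$ are linearly independent. Assuming a nontrivial relation among them, I would separate its terms by sign to obtain two effective $\mathbb{R}$-divisors $D_+$ and $D_-$ on $\tilde{S}$ whose supports are disjoint subsets of the pairwise distinct integral curves $\tilde{C}_1,\ldots,\tilde{C}_n,E_1,\ldots,E_{r-1}$, and satisfying $[D_+]=[D_-]$. The absence of common components forces $D_+\cdot D_-\geq 0$, so $D_+^2=[D_+]\cdot[D_-]\geq 0$; on the other hand, $[D_+]\in\{P_\varepsilon\}^\perp$, where the intersection form is negative definite, so $D_+^2\leq 0$ with equality only if $[D_+]=0$. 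Hence $[D_+]=0$, and pairing with any ample class on $\tilde{S}$ forces the effective divisor $D_+$ itself to vanish (each integral component has strictly positive intersection with the ample class); the analogous argument for $D_-$ makes every coefficient in the original relation equal to zero, contradicting nontriviality. Linear independence then yields $n+(r-1)\leq \rho(S)+r-1$, i.e.\ $n\leq \rho(S)$, as desired.
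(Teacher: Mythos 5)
Your proof is correct, and it follows the same underlying strategy as the paper (orthogonality of the relevant curve classes to the big and nef $\mathbb{R}$-divisor $P_\varepsilon$ from Corollary \ref{cor:BoundZarCham}, plus the Hodge index theorem and the count $\rho(\tilde{S})-1=\rho(S)+r-1$), but the execution of the crucial counting step is genuinely different. The paper first applies \cite[Lemma 4.3]{BauKurSze} to deduce that the Gram matrix of the strict transforms $\tilde{C}_1,\ldots,\tilde{C}_n$ is negative definite (giving only $n\leq\rho(S)+r-1$), and then sharpens this to $n\leq\rho(S)$ by observing that for $t>\varepsilon(D,\nu_r)\overline{\beta}_{g+1}(\nu_r)$ the negative part of the Zariski decomposition of $D^*-tE_r$ contains the $r-1$ curves $E_1,\ldots,E_{r-1}$ together with the submaximal curves, and bounding its number of irreducible components via Hodge index. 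You instead place all $n+r-1$ classes $[\tilde{C}_1],\ldots,[\tilde{C}_n],[E_1],\ldots,[E_{r-1}]$ directly in the negative definite hyperplane $\{P_\varepsilon\}^\perp$ (using Noether's formula for the $\tilde{C}_i$ and the proximity equalities for the $E_j$, exactly as in Proposition \ref{prop:decZar}) and prove their linear independence by the standard sign-separation argument for effective divisors without common components; this avoids both the external lemma and the Zariski-chamber bookkeeping, and is arguably more self-contained. One small point you leave implicit: Corollary \ref{cor:BoundZarCham} requires the strict inequality $\hat{\mu}_D(\nu_r)>\varepsilon(D,\nu_r)\overline{\beta}_{g+1}(\nu_r)$, which follows from non-minimality via Proposition \ref{prop:sDnu_mupico} and the equivalence (a)$\Leftrightarrow$(d) of Theorem \ref{thm:triangulo}; this is the same shortcut the paper itself takes, so it is not a gap, but it deserves a sentence.
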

\begin{proof}
By Corollary \ref{cor:BoundZarCham}, the $\mathbb{R}$-divisor $P_{\varepsilon}:=D^*-\varepsilon(D,\nu_r)\sum_{i=1}^r\nu_r(\mathfrak{m}_i)E_i^*$ on $\tilde{S}$ is big and nef. Consider the set $\{C_i\}_{i=1}^n$ of  submaximal curves on $S$ computing $\varepsilon(D,\nu_r)$. Set $\tilde{C}_i$ the strict transform of $C_i$ on $\tilde{S}$. Pick $\alpha_i\geq  0$, and then,
$$
P_{\varepsilon}\cdot \sum_{i=1}^n \alpha_i\tilde{C}_i=\sum_{i=1}^n \alpha_i \left(P_{\varepsilon}\cdot\tilde{C}_i\right)=0,
$$
where the last equality holds by Remark \ref{remark:submaximal_onlynonminimal}. Therefore, by \cite[Lemma 4.3]{BauKurSze} the intersection matrix given by $\tilde{C}_1,\tilde{C}_2,\ldots,\tilde{C}_n$ is negative definite and, by Hodge index Theorem \cite[Chapter V, Theorem 1.9]{Har}, $n\leq \rho(S) +r -1$. Moreover, by Proposition 3.9, the negative part of the Zariski decomposition of  $D^*-\varepsilon(D,\nu_r)\overline{\beta}_{g+1}(\nu_r)E_r,$ $$N_\varepsilon=\varepsilon(D,\nu_r)\sum_{i=1}^{r-1}\nu_r(\varphi_i)E_i,$$ has $r-1$ components and, by Hodge index Theorem again,  the number of irreducible components of the negative part of $D^*-tE_r,t>\varepsilon(D,\nu_r)\overline{\beta}_{g+1}(\nu_r),$  is at most $\rho(S)+r-1$. Consequently, $n\leq \rho(S).$
\end{proof}

The next corollary follows from the proof of Proposition \ref{prop:numsubmaximalcurves} and the fact that the submaximal curves computing $\varepsilon(D,\nu_r)$ appear in the negative part of the Zariski decomposition of the $\mathbb{R}$-divisors $D^*-tE_r, t>\varepsilon(D,\nu_r)\overline{\beta}_{g+1}(\nu_r).$

\begin{corollary}\label{cor:boundnumberZarCha_submaximalcurves}
Let $D$ be an ample divisor on $S$ and $\nu_r$ a divisorial valuation of $S$. Assume that $\nu_r$ is not minimal with respect to $D.$ Set $n$ the number of submaximal curves on $S$ computing $\varepsilon(D,\nu_r)$ and $\rho(S)$ the Picard number of $S$. Then the segment $\{[D^*-tE_r]\mid 0\leq t\leq \hat{\mu}_D(\nu_r)\}$ crosses, at most, $2+\rho(S)-n$ Zariski chambers of the big cone of the surface $\tilde{S}$ given by $\nu_r$.
\end{corollary}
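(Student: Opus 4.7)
The plan is to translate each Zariski chamber crossing along the segment into the acquisition of at least one new irreducible component by the negative part of the Zariski decomposition of $D_t := D^* - tE_r$, via Proposition \ref{pro:KLM} together with Remark \ref{remark:thmKLM}, and then to bound the total number of such transitions by Hodge Index Theorem, in exactly the manner already used in the proof of Proposition \ref{prop:numsubmaximalcurves}. Throughout, set $t_0 := \varepsilon(D,\nu_r)\overline{\beta}_{g+1}(\nu_r)$.

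First I would invoke Proposition \ref{prop:decZar}: for every $t\in(0,t_0]$ the support of $N_t$ is the fixed set $\{E_1,\ldots,E_{r-1}\}$, so the segment stays inside a single Zariski chamber on this subinterval, which counts as the first chamber. Next, by the proof of Proposition \ref{prop:numsubmaximalcurves}, each of the $n$ submaximal curves $C_1,\ldots,C_n$ computing $\varepsilon(D,\nu_r)$ satisfies $P_\varepsilon\cdot \tilde C_i=0$, so by the observation preceding the corollary these strict transforms enter $\mathrm{Neg}(D_t)$ simultaneously at $t=t_0$. By Proposition \ref{pro:KLM} this is a single chamber crossing, producing the second chamber.

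For $t\in(t_0,\hat{\mu}_D(\nu_r)]$, every subsequent chamber crossing corresponds, again by Proposition \ref{pro:KLM} and Remark \ref{remark:thmKLM}, to at least one new irreducible component entering $\mathrm{Neg}(D_t)$. At each such $t$ the classes of the curves in $\mathrm{Neg}(D_t)$ form a negative-definite configuration in $\mathrm{Num}_\mathbb{R}(\tilde S)$---the same argument used in the proof of Proposition \ref{prop:numsubmaximalcurves} via the big-and-nef positive part $P_t$---so Hodge Index Theorem bounds their cardinality by $\rho(\tilde S)-1=\rho(S)+r-1$. Since just above $t_0$ the support already contains the $r-1+n$ classes $E_1,\ldots,E_{r-1},\tilde C_1,\ldots,\tilde C_n$, at most $(\rho(S)+r-1)-(r-1+n)=\rho(S)-n$ further transitions can occur. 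Summing the initial chamber, the transition at $t_0$, and these at most $\rho(S)-n$ later chambers gives the asserted bound $2+\rho(S)-n$.

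The main obstacle, and the step that requires most care, is justifying that each subsequent chamber transition \emph{strictly enlarges} the support of $N_t$: once a curve enters $\mathrm{Neg}(D_t)$ it must remain in the support as $t$ continues to grow towards $\hat{\mu}_D(\nu_r)$. Without this monotonicity, a previously visited chamber could be re-entered and the Hodge-Index bound on $|\mathrm{Neg}(D_t)|$ would not directly convert into a bound on the number of distinct chambers traversed. The hint preceding the corollary indicates that this monotonicity is implicit in the Zariski-decomposition analysis already carried out in Proposition \ref{prop:numsubmaximalcurves} and Theorem \ref{prop:submaximal_curve}, so this is the point I would verify explicitly along our specific segment in order to complete the counting argument above.
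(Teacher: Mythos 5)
Your argument is correct and is exactly the one the paper intends: the paper only sketches this corollary (it ``follows from the proof of Proposition \ref{prop:numsubmaximalcurves} and the fact that the submaximal curves computing $\varepsilon(D,\nu_r)$ appear in the negative part of $D^*-tE_r$ for $t>\varepsilon(D,\nu_r)\overline{\beta}_{g+1}(\nu_r)$''), and your count of $1+1+(\rho(S)-n)$ chambers via the Hodge-index bound $\rho(S)+r-1$ on $|\mathrm{Neg}(D_t)|$ is that sketch made explicit. The monotonicity you flag at the end is the standard minimality property of the Zariski negative part: for $t_1<t_2$ write $D_{t_1}=\frac{t_2-t_1}{t_2}D^*+\frac{t_1}{t_2}D_{t_2}$, which exhibits a decomposition of $D_{t_1}$ into a nef class plus the effective class $\frac{t_1}{t_2}N_{t_2}$, whence $N_{t_1}\leq \frac{t_1}{t_2}N_{t_2}$ and the supports are nested, so your count goes through.
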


Let $\nu$ be an exceptional curve valuation of a smooth projective surface $S$ and $D$ an ample divisor on $S$. To determine the number of vertices of the Newton-Okounkov body $\Delta_\nu(D)$ is an open problem and several conjectures have been proposed in \cite{MoyFerNicRoe}. We have just proved that, when $\nu$ is not minimal with respect to $D,$ $\Delta_\nu(D)$ could have three or more vertices and the number of vertices of $\Delta_\nu(D)$ depends on the submaximal curves on $S$ which compute $\varepsilon(D,\nu_r)$. In addition, the fact of having only a submaximal curve computing $\varepsilon(D,\nu_r)$ is a necessary condition for $\Delta_\nu(D)$ to have $2\rho(S) +2$ vertices.

Our last result determines a triangle which is always included in $\Delta_\nu(D).$

\definecolor{aqaqaq}{rgb}{0.6274509803921569,0.6274509803921569,0.6274509803921569}
\definecolor{eqeqeq}{rgb}{0.8784313725490196,0.8784313725490196,0.8784313725490196}
\definecolor{uuuuuu}{rgb}{0.26666666666666666,0.26666666666666666,0.26666666666666666}
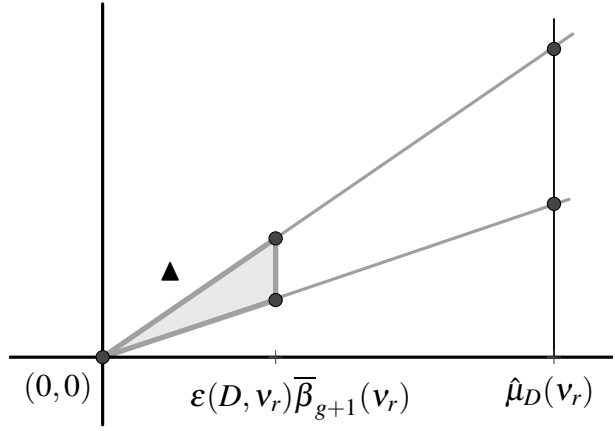
\begin{figure}[ht!]
\begin{center}
\begin{tikzpicture}[line cap=round,line join=round,>=triangle 45,x=1.0cm,y=1.0cm]
\clip(-1.2364583977696633,-1.0882371964892057) rectangle (6.773570558375912,4.857050778397071);
\fill[line width=2.pt,color=eqeqeq,fill=eqeqeq,fill opacity=0.699999988079071] (0.,0.) -- (2.3,0.76) -- (2.3,1.58) -- cycle;
\draw [line width=0.7pt] (6.,4.5)-- (6.,0);
\draw [line width=1.2pt,color=aqaqaq] (0.,0.)-- (6.25,4.3);
\draw [line width=1.2pt,color=aqaqaq] (0.,0.)-- (6.2,2.1);
\draw [line width=2.pt,color=aqaqaq] (2.3,0.76)-- (2.3,1.58);
\draw [line width=2.pt,color=aqaqaq] (2.3,0.76)-- (0,0);
\draw [line width=2.pt,color=aqaqaq] (2.3,1.58)-- (0,0);
\draw [line width=1.2pt] (0.,-0.9) -- (0.,4.7);
\draw [line width=1.2pt,domain=-1.2364583977696633:6.773570558375912] plot(\x,{(-0.-0.*\x)/1.});
\draw (-1.2,-0.04854397920613749) node[anchor=north west] {$(0,0)$};
\draw (1.01254602779174,-0.10711824496856387) node[anchor=north west] {$\varepsilon(D,\nu_r)\overline{\beta}_{g+1}(\nu_r)$};
\draw (5.2,-0.10711824496856387) node[anchor=north west] {$\hat{\mu}_D(\nu_r)$};
\draw (0.6,1.4) node[anchor=north west] {$\blacktriangle$};
\begin{scriptsize}
\draw [fill=uuuuuu] (0.,0.) circle (2.5pt);
\draw [fill=uuuuuu] (6.,4.1) circle (2.5pt);
\draw [fill=uuuuuu] (6.,2.04) circle (2.5pt);
\draw [fill=uuuuuu] (2.3,0.76) circle (2.5pt);
\draw [fill=uuuuuu] (2.3,1.58) circle (2.5pt);
\draw [color=uuuuuu] (2.3,0.)-- ++(-2.5pt,0 pt) -- ++(5.0pt,0 pt) ++(-2.5pt,-2.5pt) -- ++(0 pt,5.0pt);
\draw [color=uuuuuu] (6.,0.)-- ++(-2.5pt,0 pt) -- ++(5.0pt,0 pt) ++(-2.5pt,-2.5pt) -- ++(0 pt,5.0pt);
\end{scriptsize}
\end{tikzpicture}
\caption{The triangle $\blacktriangle$ in Proposition \ref{pro:triangle}.}\label{fig:subtriangle}
\end{center}
\end{figure}

\begin{proposition}\label{pro:triangle}
Let $D$ be an ample divisor on a smooth projective surface $S$.
Let $\nu$ be an exceptional curve valuation of $S$ defining a flag $E_\bullet$ as in \eqref{eq:flag}. Denote by $\nu_r$ the  first component of $\nu$. Set $\pi$ the finite simple sequence of blowups defined by $\nu_r$ (see \eqref{Eq_sequencepointblowingups_divvaluation}) and keep the notation as in Section \ref{sec:pre}. Then, the Newton-Okounkov body of $D$ with respect to $\nu,\Delta_\nu (D),$ contains the triangle $\blacktriangle$ whose vertices are $\textbf{0}=(0,0)$ and the following two points $P_1\text{ and }P_2$:
\begin{enumerate}
\item[(a)] When $q=p_{r+1}$ is the satellite point $E_r\cap E_\eta,\eta\neq r$, then
$$
P_1=\varepsilon(D,\nu_r)\left(\overline{\beta}_{g+1}(\nu_r), \nu_r(\varphi_\eta)\right) \text{ and } P_2=\varepsilon(D,\nu_r)\left(\overline{\beta}_{g+1}(\nu_r), \nu_r(\varphi_\eta)+1\right),
$$
where $\varphi_\eta$ is as defined in Lemma \ref{lem:336}.
\item[(b)] Otherwise (i.e., when $p_{r+1}$ is free),
$$
P_1=\varepsilon(D,\nu_r)\left(\overline{\beta}_{g+1}(\nu_r), 0\right) \text{ and } P_2=\varepsilon(D,\nu_r)\left(\overline{\beta}_{g+1}(\nu_r), 1\right).
$$
\end{enumerate}

\end{proposition}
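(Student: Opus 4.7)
The plan is to apply Proposition \ref{pro:LM-alphabeta} directly, which is legitimate since $D$ is ample and hence big and nef. That result presents $\Delta_{\nu}(D)$ as the region $\{(t,y)\mid 0\leq t\leq\hat\mu_D(\nu_r),\ \alpha(t)\leq y\leq\beta(t)\}$, so it suffices to show that on the subinterval $0\leq t\leq\varepsilon(D,\nu_r)\overline{\beta}_{g+1}(\nu_r)$ the two functions $\alpha$ and $\beta$ are linear in $t$ and take, at the right endpoint, the values prescribed by the second coordinates of $P_1$ and $P_2$. The triangle $\blacktriangle$ is then swept out as $t$ varies from $0$ to $\varepsilon(D,\nu_r)\overline{\beta}_{g+1}(\nu_r)$, and is automatically contained in $\Delta_{\nu}(D)$.

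On the interval in question, the Zariski decomposition $D_t=P_t+N_t$ is explicitly given by Proposition \ref{prop:decZar}. I would first compute $E_r\cdot P_t$ using the orthogonality $E_i^{*}\cdot E_j^{*}=-\delta_{ij}$ of total transforms of exceptional divisors, the equality $E_r=E_r^{*}$, and the standard fact $\nu_r(\mathfrak m_r)=1$ for the valuation normalized by $c=1$: together with $E_r\cdot D^{*}=0$ these yield $E_r\cdot P_t=t/\overline{\beta}_{g+1}(\nu_r)$, so that $\beta(t)-\alpha(t)=t/\overline{\beta}_{g+1}(\nu_r)$ is indeed linear in $t$.

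Next I would compute $\alpha(t)=\ord_{p_{r+1}}(N_t|_{E_r})$ by reading off, in the expression $N_t=\frac{t}{\overline{\beta}_{g+1}(\nu_r)}\sum_{i=1}^{r-1}\nu_r(\varphi_i)E_i$, the coefficient of the unique component (if any) passing through $p_{r+1}$. In the satellite case $p_{r+1}=E_r\cap E_\eta$ with $\eta\neq r$, only $E_\eta$ passes through $p_{r+1}$ among $E_1,\dots,E_{r-1}$, and its transversal intersection with $E_r$ gives $\alpha(t)=t\,\nu_r(\varphi_\eta)/\overline{\beta}_{g+1}(\nu_r)$; in the free case no $E_i$ with $i<r$ meets $E_r$ at $p_{r+1}$, so $\alpha(t)=0$. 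Evaluating at $t=\varepsilon(D,\nu_r)\overline{\beta}_{g+1}(\nu_r)$ in each case recovers exactly the coordinates of $P_1$ and $P_2$ in parts (a) and (b).

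The main obstacle, or rather the step that must be done carefully, is the combinatorial identification of the components of $N_t$ passing through the distinguished point $p_{r+1}$ on $\tilde S$. This amounts to the assertion that, among the strict transforms $E_1,\dots,E_{r-1}$ on $\tilde S$, exactly $E_\eta$ meets $E_r$ at $p_{r+1}$ in the satellite situation and none does in the free situation, which follows from the proximity structure of $\mathcal C_{\nu_r}\cup\{p_{r+1}\}$ and the dual graph of $\nu_r$ near $E_r$. Once this is handled, the remainder of the proof is direct intersection theory on $\tilde S$ plus the explicit description of $P_t$ and $N_t$ from Proposition \ref{prop:decZar}.
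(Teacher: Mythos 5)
Your argument is correct and is essentially the paper's own: the paper deduces the result from Proposition \ref{pro:LM-alphabeta} together with Proposition \ref{prop:decZar} (in the non-minimal case) and from Theorem \ref{thm:triangulo}, Lemma \ref{lem:336} and Proposition \ref{prop:335} (in the minimal case), and your computation of $\alpha(t)$ and $\beta(t)$ on $[0,\varepsilon(D,\nu_r)\overline{\beta}_{g+1}(\nu_r)]$ is exactly the content of the first pair of citations, applied uniformly so that no case split is needed. The only point left tacit is the inequality $\varepsilon(D,\nu_r)\overline{\beta}_{g+1}(\nu_r)\leq\hat{\mu}_D(\nu_r)$ (Proposition \ref{prop:sDnu_mupico}(a)), which guarantees that the interval you work on is indeed a subinterval of $[0,\hat{\mu}_D(\nu_r)]$ where Proposition \ref{pro:LM-alphabeta} describes $\Delta_{\nu}(D)$.
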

\begin{proof}
The result follows from Theorem \ref{thm:triangulo}, Lemma \ref{lem:336} and Proposition \ref{prop:335} when the exceptional curve valuation is minimal with respect to $D$. Otherwise, it can be deduced from Theorem \ref{pro:LM-alphabeta} and Proposition \ref{prop:decZar}.  See Figure \ref{fig:subtriangle}.
\end{proof}

\begin{remark}
The triangle $\blacktriangle$ described in Proposition \ref{pro:triangle} is the largest simplex having the origin as a vertex and contained in $\Delta_\nu (D).$  In addition, if we assume that the divisorial valuation $\nu_r$ is defined by a point blowup, $\blacktriangle$ is just the inverted standard simplex described in \cite{KurLoz1} or \cite{ParkShin} for the case of surfaces.
\end{remark}

\section*{Acknowledgements}
The third author would like to thank the Department of Algebra, Analysis, Geometry and Topology, and IMUVa of Valladolid University for the support received when preparing this article.

\bibliographystyle{plain}
\bibliography{BIBLIO_paquete1}

\end{document}